\newtheorem{theorem}{Theorem}[section]
\newtheorem{lemma}[theorem]{Lemma}
\newtheorem{proposition}[theorem]{Proposition}
\newtheorem{definition}[theorem]{Definition}
\newtheorem{hypothesis}[theorem]{Hypothesis}
\newtheorem{hypotheses}[theorem]{Hypotheses}
\newtheorem{remark}[theorem]{Remark}
\numberwithin{equation}{section}
\def\sqr#1#2{{\vcenter{\vbox{\hrule height .#2pt \hbox{\vrule
 width .#2pt height#1pt \kern#1pt \vrule
width .#2pt} \hrule height .#2pt}}}}
\def\ds{\begin{displaystyle}}
\def\eds{\end{displaystyle}}
\def\<{\langle }
\def\>{\rangle }
\def\R{\mathbb R}
\def\N{\mathbb N}
\title{Space Regularity of Evolution Equations Driven by Rough Paths}
\author[D. Addona]{Davide Addona}
\address{D.A. \& L.L.: Dipartimento di Scienze Matematiche, Fisiche e Informatiche, Plesso di Matema\-ti\-ca, Universit\`a degli Studi di Parma, Viale Parco Area delle Scienze 53/A, I-43124 Parma, Italy}
\author[L. Lorenzi]{Luca Lorenzi}
\author[G. Tessitore]{Gianmario Tessitore}
\address{G.T.: Dipartimento di Matematica e Applicazioni, Universit\`a degli Studi di Milano-Bicocca, Milano, Via R. Cozzi 55, I-20126 Milano Italy}
\email{davide.addona@unipr.it}
\email{luca.lorenzi@unipr.it}
\email{gianmario.tessitore@unimib.it}
\begin{document}

\maketitle

\begin{abstract}
In this paper, we consider the linear evolution equation $dy(t)=Ay(t)dt+Gy(t)dx(t)$, where $A$ is a closed operator, associated to a semigroup, with good smoothing effects in a Banach space $E$, $x$ is a nonsmooth path, which is $\eta$-H\"older continuous for some $\eta\in (1/3,1/2]$, and $G$ is a non-smoothing linear operator on $E$. We prove that the Cauchy problem associated with the previous equation admits a unique mild solution and we also show that the solution increases the regularity of the initial datum as soon as time evolves.
Then, we show that the mild solution is also an integral solution and this allows us to prove a It\^o formula.
\end{abstract}

\noindent \textit{2020 Mathematics Subject Classification:} Primary: 60L50; Secondary: 60H05, 60H15, 47D06, 35R60. 

\noindent\textit{Keywords:} Rough equations, Mild solutions and their smoothness, Integral solutions, It\^o formula, Semigroups of bounded operators,

\section{Introduction} % rivista: iberoamericana - Lyons
Infinite dimensional evolution equations driven by low regularity 'noise type' trajectories were pioneered by Gubinelli and his colleagues in their groundbreaking works \cite{GLT06} and \cite{GT}. These equations are explored in various scenarios, where the singular integral may exhibit enough regularity to be interpreted as a Young integral, or it may require treatment as an integral with respect to a rough path, as defined by T. Lyons \cite{Lyo98}. Despite the huge interest generated by rough integration techniques in the study of stochastic partial differential equations with irregular noises, only recently several authors, including those of this paper, have revisited abstract rough evolution equations in a general Hilbert or Banach framework. They have been particularly interested in the well-posedness and regularity of solutions and in the study of the dynamic systems that these solutions describe. See, for instance, \cite{ALT1}, \cite{ALT2}, \cite{HN}, \cite{HN-1}, \cite{KNA20}, and works closely related to qualitative and asymptotic properties of the dynamical systems, such as \cite{Hof22}, \cite{MaHo23}. 

This paper focuses on a linear evolution equation such as:
\begin{align}
\label{cauchy_prob_rough_intro}
\left\{
\begin{array}{ll}
dy(t)=Ay(t)dt+Gy(t)dx(t),    &  t\in(0,T], \vspace{1mm} \\
y(0)=\psi,  
\end{array}
\right.
\end{align}
where $A:D(A)\subset E\to E$ is a possibly unbounded closed operator on a Banach space $E$ that generates a semigroup of bounded linear operators $(S(t))_{t\geq 0}$  with regularizing properties, $G$ is a non-smoothing linear operator in $E$ and $x$ is an  $\eta$-H\"{o}lder path with $1/3<\eta\leq 1/2$. The initial datum $\psi$ is assumed to be sufficiently regular. Our primary contribution lies in demonstrating that the unique solution increases its regularity in space as it departs from zero. This phenomenon is well-established for classical evolution equations, its validity in the context of rough evolution equations is a natural expectation.
%, \textcolor{red}{ as claimed in \cite{GHN} in the broader framework of nonlinear equations} \textcolor{brown}{albeit without proof.}
However, as our analysis reveals, the proof of this assertion, even in the linear case, delves into the intricate balance between space and time regularity of the solution.

To elaborate further, we fix a family $(E_\lambda)_{\lambda \geq 0 }$ of Banach subspaces of $E$ with increasing regularity, where $E_0=E$ and $E_1=D(A)$. Our main theorem, stated as Theorem \ref{thm:ex_un_mild_solution_rough}, establishes that if $\psi\in E_{\eta+\alpha}$, with $\alpha>1-2\eta$, equation \eqref{cauchy_prob_rough} admits a unique mild global solution $y$. This solution demonstrates $\eta$-H\"{o}lder continuity in time with values in $E_\alpha$. Our key result reveals that the solution indeed belongs to $E_{1+\mu}$ for every $\mu\in[0,2\eta+\alpha-1)$ as soon as $t>0$ (thus it increases its regularity). % of a quantity $\eta$).
Furthermore, we provide an estimate for the growth of $|y(t)|_{E_{1+\mu}}$ as $t\searrow 0$. It is worth noting that our result signifies a genuine increase in regularity, as we do not assume that $G$ maps $E_{1+\mu}$ into itself but only into $E_{\eta+\alpha}$, where $\eta+\alpha<1+\mu$.

We study here mild solutions with a convolution integral $\int_0^t S(t-s)G y(s) dx(s)$, constructed utilizing the rough path $(x,\mathbb{X})$, and the SG-derivative of $Gy$ (see Definition \ref{def-2.3}). The notion of SG-derivative employed here is inspired by \cite{GT} and appears to be particularly suitable for proving the regularizing properties of the convolution integral.

Once we have shown that the mild solution is regular and lies in the domain of $A$, the equation can be, when the path $x$ is lifted in a geometric rough path, reformulated in 'classical' or integral sense as 
\begin{eqnarray*}
y(t)=\psi+\int_0^t Ay(s) ds + \int_0^t Gy(s) dx(s),\qquad\;\,t\in [0,T].
\end{eqnarray*}
Eventually, this integral reformulation enables us to establish a chain rule formula, as detailed in Section \ref{sect-5}. We finally recall that, as it is well known, our abstract model covers, for instance,  parabolic PDEs driven by the trajectories of a fractional Brownian motion with Hurst index $H\in (1/3,1/2]$ (see Section \ref{sect-6}).

Similar results have been achieved by the authors of this paper in the context of Young evolution equations (that is when the singular term $x$ is $\eta$-H\"{o}lder with $\eta>1/2$), under two different sets of assumptions regarding the regularity of the initial datum $\psi$.
 
In \cite{HN}, inspired by and extending \cite{GLS}, the authors establish the existence and uniqueness of a local-in-time mild solution to equation \eqref{cauchy_prob_rough}, augmented with an extra drift term. Their framework includes solutions that may blow up at the origin, with the noise being a trace-class fractional Brownian motion with Hurst index $H\in (1/3, 1/2]$ and $G$ being a nonlinear (though regular) term. Unlike our setting, $G$ maps larger spaces $E_{\alpha}$ into smaller ones, demonstrating a smoothing effect. Furthermore, the solution does not exhibit an increase in regularity and lies in $E$. For related results, see also \cite{HN-0}, where certain spatial regularity properties of the mild solution are demonstrated. Specifically, the authors establish that the solution resides in $D((-A)^{\beta})$ with $\beta<\eta$.

In \cite{HN-1}, a semilinear problem for the same equation considered here is analyzed. The main result guarantees global-in-time existence-uniqueness of a mild solution. However, the authors of \cite{HN-1} directly address the Gubinelli derivative, and their results are weaker than ours in terms of the space regularity of the solution. With our notation, their primary outcome ensures the existence of a unique mild solution $y\in C([0,T];E_{\eta+\alpha})$, assuming $\xi\in E_{\eta+\alpha}$. The Gubinelli derivative belongs to $C([0,T];E_{\alpha})\cap C^{\alpha}([0,T];E_{\alpha-\eta})$, where $E_{\alpha-\eta}$ is an extrapolation space. Notably, $\alpha$ can also be negative, implying that the initial condition $\psi\in E_{\eta+\alpha}$  may be quite irregular. Additionally, we refer the reader to \cite{GHN}, where the existence and uniqueness of a local mild solution are established for a more general problem. Here, the operator $A$ is replaced by a family of sectorial operators, dependent on $t$, whose resolvent sets contain a common sector and $G$ may be nonlinear. The authors also demonstrate that the mild solution is a weak solution and continuously depends on the data. Unlike the other cited results, the authors of \cite{GHN} do not impose   that the image of the function $G$ is a proper subspace of its domain.
%{However, spatial regularity results of the mild solution are not demonstrated in this scenario; instead,} .
%On the other side their solution inherits the same spatial regularity as the initial datum, which may also belong to some extrapolation space. % (see Theorem 2.15).\luca{la teniamo?}

The paper is structured as follows.
In Section \ref{sect-2}, we introduce the standing assumptions on the operator $A$ and the associated semigroup. Moreover, in  Subsection \ref{subsect-2.1} we recall the construction of the convolution integral $\int_0^tS(t-r)f(r)dx(r)$ when $x$ is $\eta$-H\"older continuous with $\eta>1/2$. A fundamental tool in this direction is the Sewing lemma (see Proposition \ref{prop:new_sew_map}) which we present in the form of \cite{ALT2}. Subsection \ref{subsect-2.1} is a preludio to the definition of the SG-derivative (see Definition \ref{def-2.3}), of a function $y\in C([0,T];E)$, which generalizes the concept of Gubinelli derivative and is modellized on the semigroup $(S(t))_{t\ge 0}$. % In fact, as we show, if a function $y$ admits SG-derivative, then, \textcolor{red}{under suitable additional assumptions}, it also admits Gubinelli derivative and the two derivatives actually coincide.
Through the use of the SG-derivative and the sewing Lemma, in Subsection \ref{subsect-2.2} we define the convolution integral $\int_0^tS(t-r)f(r)dx(r)$ also in the case when $x$ is a rough path with $\eta\in (1/3,1/2]$. Some relevant properties of the rough convolution integral and the SG-derivative are also investigated.

The main body of the paper are Sections \ref{sect-3} to \ref{sect-5}. More precisely, in Section \ref{sect-3}, we prove a global in time existence and uniqueness result for the mild solution to problem \eqref{cauchy_prob_rough_intro} and state smoothness properties (in space) of the solution. In Section \ref{sect-4}, we show that %in the case of geometric rough paths 
the solution to problem \eqref{cauchy_prob_rough_intro} can be approximated through solutions to problems associated with ``smooth'' paths $x$. Besides its own interest, this result is used both to prove, in case of geometric rough paths, an integral representation of the mild solution $y$
and, in Section \ref{sect-5}, a It\^{o} formula satisfied by function $y$.

\medskip

\paragraph{\bf Notation} For  every $a,b\in\R$, with $a<b$, we set  
$[a,b]^2_{<}=\{(s,t)\in [a,b]^2: s\le t\}$ and $[a,b]^3_{<}=\{(s,t,u)\in [a,b]^3: s\le t\le u\}$. Given a Banach space $E$ and $a,b\in\R$, with $a<b$, we denote by $B([a,b];E)$ the set of all bounded functions $f:[a,b]\to E$. We endow it with the sup-norm. 
We assume that reader is familiar with its subspace $C^{\alpha}([a,b];E)$ $(\alpha\in (0,1))$. For every $\mu>0$, we denote by $\mathscr C^{\mu}([a,b]^k_<;E)$ the space of functions $f:[a,b]^k_<\to E$ such that $f(t_1,\ldots,t_k)=0$ if $t_i=t_{i+1}$ for some $i=1,\ldots,k-1$ and
\begin{align*}
\|f\|_{\mathscr C^{\mu}([a,b]^k_<;E)}:=\sup_{a<t_1<\ldots<t_k\leq b}\frac{|f(t_1,\ldots,t_k)|}{|t_k-t_1|^\mu}<\infty.    
\end{align*}
If $E=\R$ then we simply write $C^\alpha([a,b])$ and $\mathscr C^{\mu}([a,b]^k_<)$.
The subscript ``$b$'' stands, everywhere it appears, for bounded.
If $G\in\mathscr{L}(E)$, then we denote its norm by $\mathfrak{g}_0$. Similarly, if $G$ is also bounded from $E_{\alpha}$ into itself for some subspace $E_{\alpha}\hookrightarrow E$, then we denote the norm of the restriction of $G$ to $E_{\alpha}$  by $\mathfrak{g}_{\alpha}$. We find it convenient to denote the sum $\mathfrak{g}_{\alpha}+\mathfrak{g}_{\beta}$ by $\mathfrak{g}_{\alpha,\beta}$.
Finally, If $L_1$ and $L_2$ are two bounded linear operators defined on the Banach space $E$, then we denote by $[L_1,L_2]$ their commutator, i.e., $[L_1,L_2]=L_1L_2-L_2L_1$. 

\newpage

\section{Preliminary results and the construction of the rough convolution integral}
\label{sect-2}
\begin{hypotheses}
\label{hyp-main}
\begin{enumerate}[\rm (i)]
\item
$A:D(A)\subset E\to E$ is a closed operator which generates a semigroup of bounded linear operators $(S(t))_{t\geq0}$ on the Banach space $E$.
\item
For every $\lambda\in [0,3)$, there exists a Banach space $E_{\lambda}$ $($with the convention that $E_0=E$ and $E_1=D(A))$ such that $E_{\lambda}$ is continuously embedded into $E_{\zeta}$ if $\zeta<\lambda$. We denote by $K_{\lambda,\zeta}$ a positive constant such that $|x|_{E_\zeta}\le K_{\lambda,\zeta}|x|_{E_\lambda}$ for every $x\in E_{\lambda}$;
\item
for every $\zeta,\lambda\in [0,3)$, with $\zeta\leq\lambda$, $\mu,\nu\in[0,1]$, with $\mu>\nu$, and $T>0$ there exist positive constants $L_{\zeta,\lambda,T}$, and $C_{\mu,\nu,T}$ such that\footnote{When no confusion may arise, we do not stress the dependence of the constants on $T$.}
\begin{align}
\left\{
\begin{array}{ll}
(a)\ \|S(t)\|_{\mathscr{L}(E_\zeta, E_{\lambda})}\leq L_{\zeta,\lambda,T} t^{-\lambda+\zeta},\\[1mm]
(b) \ \|S(t)-I\|_{\mathscr{L}(E_\mu,E_\nu)}\leq C_{\mu,\nu,T} t^{\mu-\nu}
\end{array}
\right.
\label{stime_smgr}
\end{align}
for every $t\in (0,T]$;
\item 
$S(t)$ is injective for every $t\geq 0$.
\end{enumerate}
\end{hypotheses}

\begin{remark}
\label{strong-cont-smgr}
{\rm 
From \eqref{stime_smgr} it follows that, for
every $x\in E$, the function $S(\cdot)x$ is continuous in $(0,\infty)$ with values in $E_{\beta}$ for every $\beta\in [0,3)$. Indeed, fix $t_0>0$, $t\in (t_0,t_0+1)$ and $\alpha\in (0,1)$ such that $\alpha<\beta$. Using the semigroup rule together with \eqref{stime_smgr}, we can estimate
\begin{align*}
|S(t)x-S(t_0)x|_{E_{\beta}}=&|S(t_0)(S(t-t_0)-I)x|_{E_{\beta}}\\
=&|S(t_0/2)(S(t-t_0)-I)S(t_0/2)x|_{E_{\beta}}\\
\le &\|S(t_0/2)\|_{\mathscr{L}(E_0,E_{\beta})}
\|S(t-t_0)-I\|_{\mathscr{L}(E_1,E_0)}|S(t_0)x|_{E_1}\\
\le & 2^{\beta}t_0^{-\beta-1}C_{1,0,t_0+1}L_{0,\beta,t_0+1}L_{0,1,t_0+1}|t-t_0||x|_{E_0}.
\end{align*}
Arguing similarly, we can show that, if $t\in (t_0/2,t_0)$, then
\begin{align*}
|S(t)x-S(t_0)x|_{E_{\beta}}=&|S(t/2)(S(t_0-t)-I)S(t/2)x|_{E_{\beta}}\\
\le &
2^{2\beta+1}t_0^{-\beta-1}C_{1,0,t_0}L_{0,\beta,t_0}L_{0,1,t_0}|t-t_0||x|_{E_0}.
\end{align*}
From these two estimates, the continuity of the function $S(\cdot)x$ at $t_0$ follows at once.

Further, if $x\in E_{\gamma}$ for some $\gamma>0$, then the function $S(\cdot)x$ is continuous in $[0,\infty)$ with values in $E$. Indeed, the continuity at $t=0$ follows immediately from \eqref{stime_smgr}(b), where we take $\mu=\gamma$ and $\nu=0$.}
\end{remark}

Let $y:[a,b]\to E$ and $z:[a,b]^2_<\to E$ be two given functions. We set 
\begin{align*}
(\hat \delta_1y)(s,t)=y(t)-y(s)-\mathfrak a(s,t)y(s)=y(t)-S(t-s)y(s)    
\end{align*}
for every $s,t\in[a,b]^2_<$ and 
\begin{align*}
(\hat\delta_2z)(s,t,u)
= & z(s,u)-z(t,u)-z(s,t)-\mathfrak a(t,u)z(s,t) \\
= & z(s,u)-z(t,u)-S(u-t)z(s,t)    
\end{align*}
for every $(s,t,u)\in[a,b]^3_<$, where $\mathfrak a(s,t)=S(t-s)-I$. Further, we set
\begin{align*}
(\delta_{S,2}y)(s,t,u)=S(t-s)y(s,u)-y(t,u)-S(t-s)y(s,t), \qquad  (s,t,u)\in[a,b]^3_<.
\end{align*}

Throughout the paper, we also need to consider the operators $\delta_1$ and $\delta_2$, which are defined as the corresponding operators $\hat\delta_1$ and $\hat\delta_2$, with $S(t)$ being replaced by the identity operator.

\begin{hypothesis}
\label{hyp:x}
$x\in C^\eta([a,b])$ for some $\eta\in\left(\frac13,\frac12\right ]$.  
\end{hypothesis}

\subsection{Abstract results and Young integral}
\label{subsect-2.1}
Let us explain the idea which leads to the definition of the convolution integral  of the semigroup $(S(t))_{t\ge 0}$ and a continuous function $f:[a,b]\to E$ which satisfies proper assumptions.

If $x$ is smooth (say $x\in C^1([a,b])$), then we may define the convolution integral ${\mathscr I}_{Sf}$ in the classical sense by setting 
\begin{align*}
{\mathscr I}_{Sf}(s,t):=\int_0^t S(t-r)f(r)dx(r)=
\int_0^t S(t-r)f(r)x'(r)dr
\end{align*}
for every $s,t\in[a,b]$ such that $s\le t$. We can also write
\begin{align}
\mathscr{I}_{Sf}(s,t)
= & \int_s^t S(t-r)(f(r)-S(r-s)f(s)+S(r-s)f(s))dx(r) \notag  \\
= & S(t-s)f(s)(x(t)-x(s))+\int_s^t S(t-r)(\hat\delta_1 f)(s,r)dx(r).
\label{esp_int_1}
\end{align}

If $x$ is less regular, then the integral in the right-hand side of \eqref{esp_int_1} is not well-defined, in general. The idea which leads to the definition of the convolution integral when $\eta\in\left (\frac{1}{2},1\right )$ comes from observing that in the 'smooth case' the function $N:[a,b]^2_{<}\to E$, defined by the integral in the last side of \eqref{esp_int_1}, satisfies the condition
$(\hat\delta_2N)(r,s,t)=S(t-s)(\hat\delta_1f)(s,r)(x(t)-x(s))$ for every $(r,s,t)\in [a,b]^3_{<}$, and from the following proposition, which states the existence of a function with the above properties also when $x$ is less smooth, provided that $f$ is smooth enough.

\begin{proposition}
\label{prop:new_sew_map}
Let Hypotheses $\ref{hyp-main}$ be satisfied and fix $\mu>1$, $\beta\in [0,2)$ and $g\in  \mathscr{C}^{\mu}([a,b]^3_<;E_\beta)\cap {\rm Im}(\delta_{S,2})$. Then, there exists a unique function $M_g$, which belongs to $\displaystyle\bigcap_{0\le\varepsilon<1} \mathscr{C}^{\mu-\varepsilon}([a,b]^2_<;E_{\beta+\varepsilon})$ such that $(\hat\delta_2 M_g)(r,s,t)=S(t-s)g(r,s,t)$ for every $(r,s,t)\in [a,b]^3_{<}$. Moreover, for every $\varepsilon\in[0,1)$ there exists a positive constant $C=C(\varepsilon,\beta,\mu,b-a)$, which does not blow up as $b-a$ tends to zero, such that
\begin{align}
\|M_g\|_{\mathscr{C}^{\mu-\varepsilon}([a,b]^2_<;E_{\beta+\varepsilon})}
\leq C\|g\|_{\mathscr{C}^{\mu}([a,b]^3_<;E_\beta)}.
\label{Mg}
\end{align}
Finally, if $h\in\mathscr C^{\mu}([a,b]^2_<;E_{\beta})$, for some $\mu>1$, then $M_{\delta_{S,2}h}(s,t)=S(t-s)h(s,t)$ for every $(s,t)\in[a,b]^2_<$. 
\end{proposition}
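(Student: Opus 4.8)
The plan is to prove this as a semigroup-twisted version of the Sewing/$\Lambda$-lemma, built around the twisted coboundary $\hat\delta_2$. First I record the algebraic backbone. A direct computation using the semigroup law $S(\sigma)S(\tau)=S(\sigma+\tau)$ gives $\hat\delta_2\hat\delta_1=0$. Writing $G(r,s,t):=S(t-s)g(r,s,t)$, the relation to be solved is $\hat\delta_2 M_g=G$. The hypothesis $g\in\mathrm{Im}(\delta_{S,2})$ supplies the only structural fact the construction needs: if $g=\delta_{S,2}\phi$, then setting $\Psi(s,t):=S(t-s)\phi(s,t)$ one checks termwise that $G=\hat\delta_2\Psi$, whence $G$ obeys the twisted Chen identity
\[
G(r,s,u)+G(s,t,u)-G(r,t,u)=S(u-t)G(r,s,t)\quad(r\le s\le t\le u).\qquad(\star)
\]
No regularity of $\phi$ is used here, only the algebraic form of $G$.

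Next I construct $M_g$ as a limit of compensated Riemann sums. For a partition $\pi=\{s=\rho_0<\dots<\rho_N=t\}$ of $[s,t]$ set $J^\pi(s,t):=\sum_{i=0}^{N-1}G(\rho_i,\rho_{i+1},t)$ (the last summand vanishes since $g$ is zero on the diagonal). Removing an interior point $\rho_j$ chosen by pigeonhole so that $\rho_{j+1}-\rho_{j-1}\le 2(t-s)/(N-1)$, identity $(\star)$ collapses the three affected terms into $S(t-\rho_{j+1})G(\rho_{j-1},\rho_j,\rho_{j+1})=S(t-\rho_j)g(\rho_{j-1},\rho_j,\rho_{j+1})$. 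Working first in $E_\beta$ (the case $\varepsilon=0$), the equal-index smoothing bound $\|S(\cdot)\|_{\mathscr L(E_\beta)}\le L_{\beta,\beta,T}$ is uniformly bounded, so each removal costs at most $L_{\beta,\beta,T}\|g\|_{\mathscr C^\mu}(2(t-s)/(N-1))^\mu$. Removing points down to the trivial partition (for which $J=0$) and summing $\sum_n n^{-\mu}<\infty$ — where $\mu>1$ is essential — yields $|J^\pi(s,t)|_{E_\beta}\le C_0\|g\|_{\mathscr C^\mu}(t-s)^\mu$ with $C_0$ independent of $\pi$ and not blowing up as $b-a\to0$. The same estimate shows $\{J^\pi\}$ is Cauchy as $|\pi|\to0$, so $M_g(s,t):=\lim_{|\pi|\to0}J^\pi(s,t)$ exists, lies in $\mathscr C^\mu([a,b]^2_<;E_\beta)$, and — passing to the limit with the help of $(\star)$ — satisfies $\hat\delta_2 M_g=G$.

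I then bootstrap the regularity to every $\varepsilon\in[0,1)$ and settle uniqueness. Evaluating $\hat\delta_2 M_g=G$ at $(s,m,t)$ with $m=(s+t)/2$ gives the exact recursion $M_g(s,t)=M_g(m,t)+S(t-m)[M_g(s,m)+g(s,m,t)]$. Iterating along $a_0=s$, $a_{k+1}=(a_k+t)/2$ (so $t-a_k=2^{-k}(t-s)$) and using $M_g(a_n,t)\to0$ gives
\[
M_g(s,t)=\sum_{k\ge0}S(t-a_{k+1})\big[M_g(a_k,a_{k+1})+g(a_k,a_{k+1},t)\big].
\]
Here the genuine smoothing $\|S(t-a_{k+1})\|_{\mathscr L(E_\beta,E_{\beta+\varepsilon})}\le L_{\beta,\beta+\varepsilon,T}(2^{-(k+1)}(t-s))^{-\varepsilon}$ multiplies a bracket of size $O((2^{-k}(t-s))^\mu)$, producing a geometric series of ratio $2^{-(\mu-\varepsilon)}<1$ precisely because $\mu>1>\varepsilon$; this yields $M_g\in\mathscr C^{\mu-\varepsilon}([a,b]^2_<;E_{\beta+\varepsilon})$ and the estimate \eqref{Mg}. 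For uniqueness, a difference $D$ of two solutions satisfies $\hat\delta_2 D=0$, and telescoping $D(s,t)=\sum_i S(t-\rho_{i+1})D(\rho_i,\rho_{i+1})$ over a partition, together with $D\in\mathscr C^{\mu-\varepsilon}(E_{\beta+\varepsilon})$ for an $\varepsilon$ small enough that $\mu-\varepsilon>1$, gives $|D(s,t)|_{E_{\beta+\varepsilon}}\le L_{\beta+\varepsilon,\beta+\varepsilon,T}\|D\|_{\mathscr C^{\mu-\varepsilon}}(t-s)\,|\pi|^{\mu-\varepsilon-1}\to0$, so $D=0$.

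Finally, the consistency claim follows by verification plus uniqueness: for $h\in\mathscr C^\mu([a,b]^2_<;E_\beta)$ the function $w(s,t):=S(t-s)h(s,t)$ lies in $\bigcap_{0\le\varepsilon<1}\mathscr C^{\mu-\varepsilon}([a,b]^2_<;E_{\beta+\varepsilon})$ by the smoothing bound $|w(s,t)|_{E_{\beta+\varepsilon}}\le L_{\beta,\beta+\varepsilon,T}\|h\|_{\mathscr C^\mu}(t-s)^{\mu-\varepsilon}$, and a direct computation gives $(\hat\delta_2 w)(r,s,t)=S(t-s)(\delta_{S,2}h)(r,s,t)$; uniqueness then forces $w=M_{\delta_{S,2}h}$. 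I expect the only real obstacle to be extracting the regularity gain as $\varepsilon\to1^-$: a naive dyadic estimate inserts a smoothing singularity at every scale and converges only for $\varepsilon<\mu-1$. The remedy is the two-stage scheme above — close the estimate first in $E_\beta$, where $S$ is uniformly bounded and no singularity appears, and only afterwards extract the gain through the one-sided (toward the right endpoint $t$) geometric recursion, whose scales are tuned so that the single surviving factor $(t-a_{k+1})^{-\varepsilon}$ is always dominated by the $\mu$-Hölder smallness of the bracket.
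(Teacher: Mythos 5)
Your proposal is correct, and it amounts to a self-contained reconstruction of the argument the paper does not actually reproduce: for existence, uniqueness and estimate \eqref{Mg} the paper simply cites \cite{ALT2} (Lemma 2.7 and Proposition 2.8), and only proves the final consistency claim itself. In fact your construction coincides with the cited one: writing $g=\delta_{S,2}\phi$ and $\Psi(s,t)=S(t-s)\phi(s,t)$, so that $G=\hat\delta_2\Psi$, your compensated sums $J^{\pi}(s,t)=\sum_i G(\rho_i,\rho_{i+1},t)$ telescope to $\Psi(s,t)-\sum_i S(t-\rho_{i+1})\Psi(\rho_i,\rho_{i+1})$, which is exactly the formula \eqref{def_M_espl} the paper quotes from \cite{ALT2}. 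Your two-stage scheme is well organized and all the key identities check out: $G=\hat\delta_2\Psi$ needs no regularity of $\phi$ (which is precisely why the hypothesis $g\in{\rm Im}(\delta_{S,2})$ is formulated as it is), the twisted Chen relation $(\star)$, the point-removal collapse $S(t-\rho_{j+1})G(\rho_{j-1},\rho_j,\rho_{j+1})=S(t-\rho_j)g(\rho_{j-1},\rho_j,\rho_{j+1})$, and the closing of the estimate first in $E_\beta$ with the uniformly bounded $S(\cdot)\in\mathscr L(E_\beta)$ (this $\varepsilon=0$ case is exactly the content of Remark \ref{rmk:lin_M}(ii)) before extracting the spatial gain through the one-sided dyadic recursion $M_g(s,t)=M_g(m,t)+S(t-m)[M_g(s,m)+g(s,m,t)]$, where the single singular factor $(t-a_{k+1})^{-\varepsilon}$ is absorbed since $\mu>\varepsilon$; your diagnosis that a naive single-pass dyadic estimate only works for $\varepsilon<\mu-1$ is also accurate. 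The one point where you genuinely diverge from the paper is the last claim: the paper computes directly from \eqref{def_M_espl} that the sums $\sum_i S(t-t_{i-1})h(t_{i-1},t_i)$ are bounded by a constant times $(t-s)|\Pi_n(s,t)|^{\mu-1}$ and hence vanish in the limit because $\mu>1$, whereas you verify that $w(s,t)=S(t-s)h(s,t)$ lies in $\bigcap_{0\le\varepsilon<1}\mathscr C^{\mu-\varepsilon}([a,b]^2_<;E_{\beta+\varepsilon})$ and solves $(\hat\delta_2 w)(r,s,t)=S(t-s)(\delta_{S,2}h)(r,s,t)$, then invoke your uniqueness statement (which is legitimate, since $\delta_{S,2}h\in\mathscr C^{\mu}([a,b]^3_<;E_\beta)\cap{\rm Im}(\delta_{S,2})$); the paper's route avoids appealing to uniqueness in the full class, yours avoids recomputing a Riemann-sum limit, and both are sound.
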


\begin{proof}
The first part of the statement has been proved in \cite[Lemma 2.7 \& Proposition 2.8]{ALT2}. Hence, we limit ourselves to proving the last claim.

If we set $\psi(s,t)=S(t-s)h(s,t)$ for every $(s,t)\in[a,b]^2_<$, then from the proof of \cite[Proposition 2.8]{ALT2} it follows that
\begin{align}
M_g(s,t)
= & \psi(s,t)-\lim_{|\Pi_n(s,t)|\to0}\sum_{i=1}^n S(t-t_i)\psi(t_{i-1},t_i) \notag \\
= & \psi(s,t)-\lim_{|\Pi_n(s,t)|\to0}\sum_{i=1}^n S(t-t_{i-1})h(t_{i-1},t_i)    
\label{def_M_espl}
\end{align}
for every $(s,t)\in[a,b]^2_<$,
where $\Pi_n(s,t):=\{s=t_0<t_1<\ldots<t_n=t\}$ is a partition of $[s,t]$ and $|\Pi_n(s,t)|$ denotes its mesh. Since
\begin{align*}
|h(t_{i-1},t_i)|_{E_\beta}
\leq \|h\|_{\mathscr C^{\mu}([a,b]^2_<;E_{\beta})}|t_i-t_{i-1}|^{\mu}, \qquad\;\, i=1,\ldots,n,
\end{align*}
we can estimate (here below $L_{\beta,\beta}=L_{\beta,\beta,b-a}$)
\begin{align*}
\bigg |\sum_{i=1}^n S(t-t_{i-1})h(t_{i-1},t_i)\bigg |_{E_\beta}\leq & L_{\beta,\beta} \|h\|_{\mathscr C^{\mu}([a,b]^2_<;E_{\beta})}\sum_{i=1}^n |t_i-t_{i-1}|^{\mu} \\
\leq & L_{\beta,\beta} \|h\|_{\mathscr C^{\mu}([a,b]^2_<;E_{\beta})}|\Pi_n(s,t)|^{\mu-1}\sum_{i=1}^n (t_i-t_{i-1}) \\
= & L_{\beta,\beta} \|h\|_{\mathscr C^{\mu}([a,b]^2_<;E_{\beta})}(t-s)|\Pi_n(s,t)|^{\mu-1},
\end{align*}
so that the limit in the last side of \eqref{def_M_espl} 
as $|\Pi_n(s,t)|$ tends to zero is equal to zero.
We have so proved that
$M_{\delta_{S,2}h}(s,t)=S(t-s)h(s,t)$ for every $(s,t)\in[a,b]^2_<$, as it has been claimed.
\end{proof}

% \textcolor{red}{
% We fix $s\in[a,b)$ and, for every $t\in(s,b]$ and every $n\in\N$, we consider the dyadic partition $\Pi_n(s,t)$ of $[s,t]$ of order $2^n$, i.e., 
% \begin{align*}
% \Pi_n(s,t)=\{t_0(t)=s<t_1(t)<\ldots<t_{2^n}(t)=t\}, \ \  t_i(t):=s+i\frac{t-s}{2^n}, \ i=0,\ldots,2^n.  
% \end{align*}
% We also set
% \begin{align*}
% M_{g,n}(s,r)
% := & \psi(s,r)-\sum_{i=1}^{2^n}S(r-t_i(r))\psi(t_{i-1}(r),t_i(r)) \\
% = & S(r-s)h(s,r)-\sum_{i=1}^{2^n}S(r-t_{i-1}(r))h(t_{i-1}(r),t_i(r)), \ \  r\in(s,b], \ n\in\N,
% \end{align*}
% and we have used the fact that $\psi(s,t)=S(t-s)h(s,t)$ for every $(s,t)\in[a,b]^2_<$ and some $h:[a,b]^2_<\to E$. If we integrate $M_{h,n}(s,r)$ with respect to $r$ between $s$ and $t$, then we get
% \begin{align*}
% \int_s^tM_{g,n}(s,r)dr
% = & \int_s^tS(r-s)h(s,r)dr
% -\sum_{i=1}^{2^n}\int_s^tS(r-t_{i-1}(r))h(t_{i-1}(r),t_i(r))dr
% \end{align*}
% }

\begin{remark}
\label{rmk:lin_M}
{\rm 
\begin{enumerate}[\rm (i)]
\item 
From the proof of \cite[Proposition 2.8]{ALT2} it follows that $M_g$ is linear, i.e., if $g_1$ and $g_2$ satisfy the same assumptions as $g$ in Proposition \ref{prop:new_sew_map}, then $M_{g_1+g_2}=M_{g_1}+M_{g_2}$.
\item 
From the proof of the quoted proposition 
it follows also that, if $(S(t))_{t\ge 0}$  satisfies Hypotheses \ref{hyp-main} but (iii)-(a), then, under the same assumption on the function $g$ in Proposition \ref{prop:new_sew_map}, there exists a unique function $M_g\in \mathscr C^{\mu}([a,b]^2_{<};E_{\beta})$ such that $(\hat\delta_2 M_g)(r,s,t)=S(t-s)g(r,s,t)$ for every $(r,s,t)\in [a,b]^3_{<}$. Moreover, estimate \eqref{Mg} holds true with $\varepsilon=0$.
\end{enumerate}}
\end{remark}

Let $g_0:[a,b]^2_{<}\to E$ be the function defined 
by 
\begin{equation}
g_0(s,t)=f(s)(x(t)-x(s)),\qquad\;\,(s,t)\in [a,b]^2_{<}.
\label{g0}
\end{equation}
If $x\in C^1([a,b])$ and $\hat\delta_1 f\in C^{\alpha}([a,b];E)$ for some $\alpha\in (0,1)$, then the function $\delta_{S,2}g_0$ belongs to $\mathscr C^{1+\alpha}([a,b]^3_<;E)$
(indeed, $(\delta_{S,2}g_0)(r,s,t)=-(\hat\delta_1f)(r,s)(x(t)-x(s))$ for every $(r,s,t)\in [a,b]^3_{<}$).
Hence, we can define the convolution integral replacing the term $N$ by the term 
$-M_{\delta_{S,2}g_0}$, so that
\begin{align}
\int_{s}^tS(t-r)f(r)dx(r):=&S(t-s)f(s)(x(t)-x(s))-M_{\delta_{S,2}g_0}(s,t)\notag\\
=&S(t-s)g_0(s,t)-M_{\delta_{S,2}g_0}(s,t)
\label{new-def}
\end{align}
for every $(s,t)\in [a,b]^2_{<}$.

From Proposition \ref{prop:new_sew_map}, we know that $(\hat\delta_2M_{\delta_{S,2}g_0})(r,s,t)=S(t-s)g_0(r,s,t)$ for every $(r,s,t)\in [a,b]^3_{<}$, so that $\hat\delta_2(N+M_{\delta_{S,2}g_0})\equiv 0$ on $[a,b]^3_{<}$. In view of \cite[Proposition 3.4]{GT} this is enough to infer that $M_{\delta_{S,2}g_0}\equiv -N$, implying that the new definition of the convolution integral in \eqref{new-def} coincides with the classical definition.

\subsection{The definition of the rough integral}
\label{subsect-2.2}
If $\eta\in\left (\frac13,\frac12\right ]$, then the expansion in \eqref{esp_int_1} can be used to define the convolution integral only if the continuous function $f:[a,b]\to E$ satisfies the condition $\hat\delta_1f\in \mathscr C^{\alpha}([a,b]^2_{<};E)$ for some $\alpha>1-\eta$. 
Indeed, if $\eta+\alpha\le 1$, we cannot apply Proposition \ref{prop:new_sew_map} to the function
$\delta_{S,2}g_0$. To overcome this difficulty, we need to go further in the expansion of the integral $\mathscr{I}_{Sf}(s,t)$ and introduce the following definition which plays a crucial role in the construction of the convolution integral.

\begin{definition}
\label{def-2.3}
Given a continuous function $f:[a,b]\to E$, we say that $f$ admits SG-derivative with respect to $x$, with remainder of order $\rho>\eta$, if there exist $\check{f}\in C([a,b];E)$ and $R\in \mathscr C^\rho([a,b]^2_<;E)$
such that
\begin{align}
\label{formula_S_der}
(\hat\delta_1 f)(s,t)=S(t-s)\check f(s)(x(t)-x(s))+R(s,t)    
\end{align}
for every $(s,t)\in[a,b]^2_<$ and
\begin{equation}
\limsup_{t\to s^+}\frac{|x(t)-x(s)|}{|t-s|^{\rho}}=\infty,\qquad\;\,s\in [a,b).
\label{cond-x}
\end{equation}
The function $\check f$ is called SG-derivative of $f$ with respect to $x$ with remainder $R$.

% If $f$ admits SG-derivative $\check f$, then we say that the couple $F:=(f,\check f)$ is $T$-controlled by $x$\luca{SG-controlled?}, and we denote the remainder $R$ which appears in \eqref{formula_S_der} by $R^F$. 
\end{definition}

\begin{lemma}
Let $f:[a,b]\to E$ be a continuous function. Then, $f$ admits at most one SG-derivative. In particular, if $f\in C([a,b];E)$ is such that $\hat{\delta}_1f\in \mathscr C^{\rho}([a,b]^2_{<};E)$ for some $\rho>\eta$ and condition \eqref{cond-x} is satisfied, then $f$ admits SG-derivative, which identically vanishes in $[a,b]$.
\end{lemma}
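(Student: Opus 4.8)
The plan is to treat the two assertions in turn: first I would prove uniqueness of the SG-derivative, and then, in the special case, exhibit $\check f\equiv 0$ as an SG-derivative, so that the uniqueness just proved identifies it as the only one.

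For \emph{uniqueness}, I would suppose $f$ has two SG-derivatives $\check f_1,\check f_2\in C([a,b];E)$, with remainders $R_1\in\mathscr C^{\rho_1}([a,b]^2_<;E)$ and $R_2\in\mathscr C^{\rho_2}([a,b]^2_<;E)$ of orders $\rho_1,\rho_2>\eta$. Subtracting the two instances of \eqref{formula_S_der} and setting $g:=\check f_1-\check f_2\in C([a,b];E)$ gives
\[
S(t-s)g(s)(x(t)-x(s))=R_2(s,t)-R_1(s,t),\qquad (s,t)\in[a,b]^2_<.
\]
Put $\rho:=\min\{\rho_1,\rho_2\}>\eta$. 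Since the interval is bounded, both remainders belong to $\mathscr C^{\rho}([a,b]^2_<;E)$, and condition \eqref{cond-x} holds for this $\rho$ (it being one of $\rho_1,\rho_2$); hence there is $C>0$ with
\[
|S(t-s)g(s)|\,|x(t)-x(s)|\le C|t-s|^{\rho},\qquad (s,t)\in[a,b]^2_<.
\]
Fixing $s\in[a,b)$, condition \eqref{cond-x} lets me pick $t_n\downarrow s$ with $|t_n-s|^{\rho}/|x(t_n)-x(s)|\to 0$, and then the displayed bound forces $S(t_n-s)g(s)\to 0$ in $E$.

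The crux of the argument — and the step I expect to be the main obstacle — is to pass from $S(t_n-s)g(s)\to 0$ to $g(s)=0$, because the semigroup is not assumed strongly continuous at $0$ (cf.\ Remark \ref{strong-cont-smgr}, which only guarantees continuity of $S(\cdot)v$ on $(0,\infty)$ for $v\in E$). I would circumvent this by fixing $\tau>0$ and writing $\tau_n:=t_n-s\downarrow 0$: on one hand, boundedness of $S(\tau)$ gives $S(\tau+\tau_n)g(s)=S(\tau)\big(S(\tau_n)g(s)\big)\to 0$; on the other, the continuity of $S(\cdot)g(s)$ on $(0,\infty)$ from Remark \ref{strong-cont-smgr} gives $S(\tau+\tau_n)g(s)\to S(\tau)g(s)$. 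Hence $S(\tau)g(s)=0$, and the injectivity of $S(\tau)$ in Hypotheses \ref{hyp-main}(iv) yields $g(s)=0$. Since $s\in[a,b)$ is arbitrary and $g$ is continuous, $g\equiv 0$, i.e.\ $\check f_1=\check f_2$.

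For the special case, assuming $\hat\delta_1 f\in\mathscr C^{\rho}([a,b]^2_<;E)$ with $\rho>\eta$ and \eqref{cond-x} in force, I would simply check that the pair $\check f\equiv 0$ and $R:=\hat\delta_1 f$ satisfies Definition \ref{def-2.3}: identity \eqref{formula_S_der} collapses to $(\hat\delta_1 f)(s,t)=R(s,t)$, which is automatic, while $R=\hat\delta_1 f\in\mathscr C^{\rho}$ and $\check f\equiv 0\in C([a,b];E)$ by hypothesis (note $(\hat\delta_1 f)(s,s)=0$, so $R$ vanishes on the diagonal as the space $\mathscr C^{\rho}$ requires). Thus $f$ admits the SG-derivative $\check f\equiv 0$, and the uniqueness established above shows it is the only one.
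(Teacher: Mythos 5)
Your proof is correct and follows essentially the same route as the paper's: uniqueness via the blow-up condition \eqref{cond-x} forcing $S(t_n-s)g(s)\to 0$, then passing to $S(\tau)g(s)=0$ for $\tau>0$ and invoking injectivity of $S(\tau)$, with existence in the special case handled by the trivial choice $\check f\equiv 0$, $R=\hat\delta_1 f$. The only cosmetic difference is that you cite Remark \ref{strong-cont-smgr} for the continuity of $S(\cdot)g(s)$ on $(0,\infty)$, whereas the paper re-derives that estimate directly from \eqref{stime_smgr}; both are equally valid.
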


\begin{proof}
Assume that $f$ admits SG-derivatives $\check{f}_1$ and $\check{f}_2$, and let $\rho_1,\rho_2>\eta$ be such that
\begin{eqnarray*}
&(\hat\delta_1f)(s,t)=S(t-s)\check{f}_1(s)(x(t)-x(s))+R_1(s,t),\\
&(\hat\delta_1f)(s,t)=S(t-s)\check{f}_2(s)(x(t)-x(s))+R_2(s,t)
\end{eqnarray*}
for every $(s,t)\in [a,b]^2_{<}$ and some $R_1\in\mathscr{C}^{\rho_1}([a,b]^2_{<})$
and $R_2\in\mathscr{C}^{\rho_2}([a,b]^2_{<})$.

Note that
\begin{eqnarray*}
\limsup_{t\to s^+}\frac{|x(t)-x(s)|}{|t-s|^{\rho}}=\infty
\end{eqnarray*}
for every $s\in [a,b)$, where $\rho=\min\{\rho_1,\rho_2\}$. Let us fix $s\in [a,b)$ and let $(t_n)_{n\in\N}\in (s,b)$ be a sequence converging to
$s$ as $n$ tends to $\infty$ and such that 
\begin{equation}
\lim_{n\to\infty}\frac{|x(t_n)-x(s)|}{|t_n-s|^{\rho}}=\infty.
\label{expl_x}
\end{equation}

Set $y=\check f_1-\check f_2$. From the definition of SG-derivative, it follows that
\begin{align*}
|S(t_n-s)y(s) 
(x(t_n)-x(s))|_E=O(|t_n-s|^\rho) 
\end{align*}
for every $n\in\N$, i.e., 
\begin{align*}
\sup_{n\in\N}\bigg (|S(t_n-s)y(s)
|_E
\frac{|x(t_n)-x(s)|}{|t_n-s|^{\rho}}\bigg )<\infty
\end{align*}
and \eqref{expl_x} implies that 
$|S(t_n-s)y(s)|_E$ vanishes as $n$ tends to $\infty$.
Let us show that this gives $y(s)=0$. From 
properties \eqref{stime_smgr} it follows that
\begin{align*}
|S(\sigma)y(s)|_E
\le &|S(t_n-s)S(\sigma)y(s)|_E
+|S(t_n-s)S(\sigma)y(s)-S(\sigma)y(s)|_E\\
\le &\|S(\sigma)\|_{\mathscr{L}(E)}|S(t_n-s)y(s)|_E
+|t_n-s|^\beta|S(\sigma)y(s)|_{E_\beta}\\
\le &
\|S(\sigma)\|_{\mathscr{L}(E)}|S(t_n-s)y(s)|_E
+L_{0,\beta,b}\sigma^{-\beta}\|y\|_{C([a,b];E)}|t_n-s|^{\beta}
\end{align*}
for every $n\in\N$ and $\sigma>0$. Letting $n$ tend to $\infty$, we conclude that $S(\sigma)y(s)=0$ for every $\sigma>0$. Since each operator $S(\sigma)$ is one to one, we deduce that $y(s)=0$. Hence, 
$y$ identically vanishes in $[a,b)$ and, by continuity, it vanishes also at $t=b$. We have so proved that the SG-derivative of $f$ is unique, if existing.

To complete the proof, we observe that, if $\hat{\delta}_1f\in \mathscr C^{\rho}([a,b]^2_{<};E)$, then formula \eqref{formula_S_der} is clearly satisfied by taking $\check{f}\equiv 0$.
\end{proof} 

\begin{remark}
\label{rmk:lin_T_der}
{\rm 
\begin{enumerate}[\rm (i)]
\item
From the linearity of \eqref{formula_S_der}, it follows that, if $f_1$ and $f_2$ admit SG-derivative $\check f_1$ and $\check f_2$ with remainder $R^{F_1}$ and $R^{F_2}$, respectively, then $f=f_1+f_2$ admits SG-derivative $\check f=\check f_1+\check f_2$ and remainder $R^F=R^{F_1}+R^{F_2}$.
\item
In the particular case when $S(t)=I$ for every $t\ge 0$, Definition \ref{def-2.3} coincides with
the definition of Gubinelli derivative introduced in \cite{G04} (apart from the extra condition \eqref{cond-x} that we are assuming here).
Indeed, a function $f:[a,b]\to E$ admits Gubinelli derivative if there exist $\rho>\eta$ and a function $\check f_G\in C([a,b];E)$ and $\widetilde R^F\in\mathscr C^{\rho}([a,b]^2_<;E)$ such that
\begin{align*}
f(t)-f(s)
= \check f_G(s)(x(t)-x(s))+\widetilde R^F(s,t), \qquad\;\, (s,t)\in[a,b]^2_<.
\end{align*}
Clearly, $f(t)-f(s)=(\hat\delta_1f)(s,t)$ when $S(t)=I$ for every $t\in[a,b]$, so that $\check{f}$ coincides with $\check{f}_G$.
% \item 
% For every $(s,t,u)\in[a,b]^3_<$ we get\luca{non sembra che lo usiamo}
% \begin{align*}
% R^F(s,u)-R^F(t,u)
% = &-S(u-s)\check f(s)(x(t)-x(s))\notag\\
% &+S(u-t)(\hat\delta_1\check f)(s,t)(x(u)-x(t))\notag \\
% & -S(u-s)f(s)+S(u-t)f(t). 
% \end{align*}
\end{enumerate}
}
\end{remark}

Let us go back to the definition of the rough convolution integral and assume that $f$ admits SG-derivative $\check f$ with remainder $R^F$. Again, we assume that $x\in C^1([a,b])$ in order to rigorously justify all the terms which will appear in the forthcoming computations.
In such a case, we can replace the term $(\hat\delta_1f)(s,r)$ in the integral, which defines the function $N$, in the last side of \eqref{esp_int_1}, with the sum
$S(t-r)\check{f}(s)(x(r)-x(s))+R^F(s,r)$, obtaining
\begin{align}
({\mathscr I}_{Sf})(s,t)    
= & S(t-s)f(s)(x(t)-x(s))+S(t-s)\check{f}(s)\int_s^t(x(r)-x(s))dx(r) \notag \\
& + \int_s^t S(t-r)R^F(s,r)dx(r)
\label{conti_int_rough_reg}
\end{align}
for every $(s,t)\in[a,b]^2_<$.
If we set
\begin{align}
&\mathbb X(s,t):=\int_s^t(x(r)-x(s))dx(r)=\frac{1}{2}(x(t)-x(s))^2,\label{funct-X}\\
&g_{\check{f}}(s,t):=(x(t)-x(s))f(s)+\mathbb X(s,t)\check{f}(s)
\label{funct-A}
\end{align}
for every $(s,t)\in[a,b]^2_<$, then we can rewrite \eqref{conti_int_rough_reg} in the following more compact form:
\begin{align*}
({\mathscr I}_{Sf})(s,t)
= S(t-s)g_{\check{f}}(s,t)+\int_s^t S(t-r)R^F(s,r)dx(r), \qquad\;\, (s,t)\in[a,b]^2_<.
\end{align*}

Following the idea of the case $x\in C^{\eta}([a,b])$, with
$\eta\in\left (\frac{1}{2},1\right )$, we formally replace the integral term in the right-hand side of the previous formula, which now has no meaning in general, with the term $-M_{\delta_{S,2}g_{\check{f}}}(s,t)$, and still formally we define
\begin{equation}
\mathscr{I}_{Sf}(s,t)=S(t-s)g_{\check{f}}(s,t)-M_{\delta_{S,2}g_{\check{f}}}(s,t),\qquad\;\, (s,t)\in [a,b]^2_{<}.
\label{form-formale}
\end{equation}

To make the previous formal definition rigorous, we begin by
observing that the function defined in the last side of \eqref{funct-X} satisfies the following equality
% determining the natural counterpart of the function $\mathbb{X}$ in \eqref{funct-X}, which in general has no a meaning when $x$ is as in Hypothesis \ref{hyp:x}. For this purpose, we observe that, if $x\in C^1([a,b])$, then for every $(s,t,u)\in[a,b]^3_<$, we can write
% \begin{align*}
% \mathbb{X}(s,u)-\mathbb{X}(s,t)
% %=\int_s^u(x(r)-x(s))dx(r)-\int_s^y(x(r)-x(t))dx(r)
% =&\int_t^u(x(r)-x(s))dx(r)\\
% =&\int_t^u(x(r)-x(t))dx(r)+\int_t^u(x(t)-x(s))dx(r),
% \end{align*}
% so that
\begin{align}
\mathbb{X}(s,u)-\mathbb{X}(s,t)-\mathbb{X}(t,u)=(x(t)-x(s))(x(u)-x(t)),\qquad a\le s\le t\le u\le b.
\label{form-diff-X}
\end{align}

% Hence, the first step consists in determining a function 
% $\mathbb{X}$, which satisfies the above condition, and can be defined also when $x$ is less smooth.

% Simple computations show that the function $\mathbb{X}$, defined by $\mathbb X(s,t):=\frac12(x(t)-x(s))^2$ for every $(s,t)\in[a,b]^2_<$, fulfills \eqref{form-diff-X}.

\begin{remark}
\label{rem-country}
{\rm Note that all the functions
which satisfy the equality \eqref{form-diff-X} can be obtained adding to the function in the last side of \eqref{funct-X}
the term $\delta_1f$ for some $f\in C^{2\eta}([0,T])$. Indeed, 
condition \eqref{form-diff-X} can be written in the more compact form as $(\delta_2\mathbb{X})(s,t,u)=(x(t)-x(s))(x(u)-x(t))$ for every $(s,t,u)\in [0,T]^3_<$. Therefore, $\mathbb{X}$ and $\mathbb{X}'$ satisfy \eqref{form-diff-X} if and only if their difference belongs to the kernel of the operator $\delta_2$, which coincides with the image of the operator $\delta_1$. In fact, it is immediate to check that the image of $\delta_1$ is contained in the kernel of $\delta_2$. Suppose that $Z$ belongs to the kernel of $\delta_2$. Then, in particular $(\delta_2Z)(a,s,t)=0$ for every $(s,t)\in [a,b]^2_{<}$ or, equivalently,
\begin{equation}
Z(a,t)=Z(s,t)+Z(a,s),\qquad\;\,
(s,t)\in [a,b]^3_{<}.
\label{easy}
\end{equation} 

Define the function $f:[a,b]\to E$ by setting $f(t)=Z(a,t)$ for every $t\in [a,b]$. Then,
\begin{eqnarray*}
(\delta_1f)(s,t)=Z(a,t)-Z(a,s)=Z(s,t),\qquad\;\,(s,t)\in [a,b]^2_{<}, 
\end{eqnarray*}
where the last inequality follows from
\eqref{easy}. Hence, $Z$ belongs to the image of $\delta_1$.

If $\eta\le\frac{1}{2}$, then we conclude that there are infinitely many functions which satisfy condition \eqref{form-diff-X}. On the contrary, if $\eta>\frac12$ then $\mathbb X$ is uniquely determined. Indeed, in such a case the difference of two functions which satisfy Hypothesis \ref{hyp:X_2} is equal to $\delta_1f$ for some function $f\in C^{2\eta}([a,b])$. Since $2\eta>1$, $f$ is constant and $\delta_1f\equiv 0$.
}\end{remark}

For the construction of the convolution integral, we need to fix a function $\mathbb{X}\in\mathscr{C}^{2\eta}([0,T]^2_{<})$, which satisfies condition \eqref{form-diff-X}.

\begin{hypothesis}
\label{hyp:X_2}
$\mathbb X\in \mathscr C^{2\eta}([a,b]^2_<)$ is any, but fixed, increment which satisfies condition \eqref{form-diff-X}.
\end{hypothesis}

As a second step to make formula \eqref{form-formale} rigorous, we observe that, for every $(s,t,u)\in[a,b]^3_<$,
\begin{align}
(\delta_{S,2}g_{\check{f}})(s,t,u)
= & S(t-s)g_{\check{f}}(s,u)-g_{\check{f}}(t,u)-S(t-s)g_{\check{f}}(s,t) \notag \\
=& (S(t-s)f(s)-f(t))(x(u)-x(t)) - \check{f}(t)\mathbb X(t,u)\notag\\
&+S(t-s)\check{f}(s)(\mathbb{X}(s,u)-\mathbb{X}(s,t))\notag\\
=& (S(t-s)f(s)-f(t))(x(u)-x(t)) - \check{f}(t)\mathbb X(t,u) \notag \\
& +S(t-s)\check{f}(s)(\mathbb X(t,u)+(x(t)-x(s))(x(u)-x(t)))\notag \\
= & [-(\hat\delta_1f)(s,t)+S(t-s)\check{f}(s)(x(t)-x(s))](x(u)-x(t)) \notag \\
& -(\hat\delta_1\check f)(s,t)\mathbb X(t,u) \notag \\
= & -R^F(s,t)(x(u)-x(t))-(\hat\delta_1\check{f})(s,t)\mathbb X(t,u),
\label{S_incr_A}
\end{align}
where we have used formula \eqref{form-diff-X} and, in the last equality, we have applied the definition of SG-derivative. The computations in \eqref{S_incr_A} yield the following result.

\begin{lemma}
\label{lemma:def_rough_conv_int}
Under Hypotheses $\ref{hyp:x}$ and $\ref{hyp:X_2}$, assume that
$f\in C([a,b];E)$ admits SG-derivative $\check{f}$ with remainder $R^F\in \mathscr C^\rho([a,b]^2_<;E_\lambda)$ and that $\hat\delta_1\check{f}\in \mathscr C^\zeta([a,b]^2_<;E_{\omega})$ for some nonnegative parameters $\rho,\lambda,\omega,\zeta$. Then, the function $\delta_{S,2}g_{\check{f}}$ belongs to $\mathscr{C}^{\mu}([a,b]^3_{<};E_{\beta})$ where $\mu=(\rho+\eta)\wedge(\zeta+2\eta)$ and $\beta=\lambda\wedge\omega$. Moreover, there exists a positive constant $c_*$, which depends on
$\lambda$, $\omega$, $\rho$, $\eta$, $\zeta$ and $b-a$ $($and it does not blow up as $b-a$ tends to zero$)$, such that
\begin{align}
&\|\delta_{S,2}g_{\check{f}}\|_{\mathscr{C}^{\mu}([a,b]^3_{<};E_\beta)}\notag\\
\leq & c_*\big([x]_{C^\eta([a,b])}\|R^F\|_{\mathscr C^\rho([a,b]^2_<;E_\lambda)}+\|\mathbb X\|_{\mathscr C^{2\eta}([a,b]^2_<)}\|\hat\delta_1\check{f}\|_{\mathscr C^\zeta([a,b]^2_<;E_\omega)}\big).
\label{stima-A}
\end{align} 
\end{lemma}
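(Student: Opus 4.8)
The plan is to read the estimate directly off the explicit identity \eqref{S_incr_A}, which already expresses the three-index increment as a sum of two elementary products,
\begin{align*}
(\delta_{S,2}g_{\check f})(s,t,u)=-R^F(s,t)(x(u)-x(t))-(\hat\delta_1\check f)(s,t)\,\mathbb X(t,u),
\end{align*}
and then to bound each summand separately in the norm of $E_\beta$. Since $\beta=\lambda\wedge\omega$, both $E_\lambda$ and $E_\omega$ embed continuously into $E_\beta$ by Hypotheses \ref{hyp-main}(ii), with embedding constants $K_{\lambda,\beta}$ and $K_{\omega,\beta}$; this is precisely what allows both terms to be landed in the same target space $E_\beta$.

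For the first summand I would combine the embedding $E_\lambda\hookrightarrow E_\beta$ with the $\mathscr C^\rho$-bound on $R^F$ and the $\eta$-H\"older continuity of $x$, obtaining
\begin{align*}
|R^F(s,t)(x(u)-x(t))|_{E_\beta}\le K_{\lambda,\beta}\,\|R^F\|_{\mathscr C^\rho([a,b]^2_<;E_\lambda)}\,[x]_{C^\eta([a,b])}\,|t-s|^\rho|u-t|^\eta.
\end{align*}
For the second summand I would use $E_\omega\hookrightarrow E_\beta$ together with the $\mathscr C^\zeta$-bound on $\hat\delta_1\check f$ and the $\mathscr C^{2\eta}$-bound on $\mathbb X$, which produces the factor $|t-s|^\zeta|u-t|^{2\eta}$. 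In each case, since $s\le t\le u$, I replace both $|t-s|$ and $|u-t|$ by $|u-s|$, so that the first term is controlled by $|u-s|^{\rho+\eta}$ and the second by $|u-s|^{\zeta+2\eta}$.

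Finally I would merge the two bounds into a single power $|u-s|^\mu$ with $\mu=(\rho+\eta)\wedge(\zeta+2\eta)$: on the bounded simplex the larger of the two exponents is dominated by the smaller one at the cost of a factor $(b-a)^{|(\rho+\eta)-(\zeta+2\eta)|}$, which stays bounded as $b-a\to0$ (its exponent being nonnegative) and is absorbed into $c_*$ along with the two embedding constants. It then remains only to check membership in $\mathscr C^\mu([a,b]^3_<;E_\beta)$ in the strict sense, namely that $\delta_{S,2}g_{\check f}$ vanishes on the degenerate simplices: when $t=u$ both $x(u)-x(t)$ and $\mathbb X(t,u)$ vanish, and when $s=t$ both $R^F(s,t)$ and $(\hat\delta_1\check f)(s,t)$ vanish, since $R^F$ and $\hat\delta_1\check f$ are themselves elements of the respective $\mathscr C$-spaces. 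The computation is entirely elementary; the only genuine care required is the bookkeeping of the embedding constants and the verification that the $(b-a)$-powers introduced when equalizing the two exponents do not blow up as $b-a\to0$, which is immediate.
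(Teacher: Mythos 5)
Your proof is correct and takes essentially the same route as the paper: the paper derives the identity \eqref{S_incr_A} and then states the lemma as its immediate consequence, leaving the term-by-term estimates implicit. Your write-up supplies exactly those routine details (the embeddings $E_\lambda,E_\omega\hookrightarrow E_\beta$, replacing $|t-s|$ and $|u-t|$ by $|u-s|$, equalizing the two exponents at the cost of a nonnegative power of $b-a$, and checking the vanishing on degenerate simplices), all correctly.
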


In view of Lemma \ref{lemma:def_rough_conv_int} and Proposition \ref{prop:new_sew_map}, the function $M_{\delta_{S,2}g_{\check{f}}}$, and consequently formula \eqref{form-formale}, is now completely meaningful if $(\rho+\eta)\wedge(\zeta+2\eta)>1$. Thus, we can give the following definition.

\begin{definition}
\label{def:def_rough_conv_int}
Let $f\in C([a,b];E)$ admit SG-derivative $\check{f}$ with remainder $R^F\in \mathscr C^\rho([a,b]^2_<;E_\lambda)$ and $\hat\delta_1\check{f}\in \mathscr C^\zeta([a,b]^2_<;E_{\omega})$, for some
$\lambda,\omega\ge 0$ and $\rho,\zeta>0$, such that
$(\rho+\eta)\wedge(\zeta+2\eta)>1$. We define the $($rough$)$ convolution integral $\mathscr I_{Sf}:[a,b]^2_<\to E$ with respect to the path $X=(x,\mathbb{X})$ as
\begin{align}
\label{def_int_conv_rough}
{\mathscr I}_{Sf}(s,t)=S(t-s)g_{\check{f}}-M_{\delta_{S,2}g_{\check{f}}}(s,t),  \qquad\;\, (s,t)\in[a,b]^2_<,  
\end{align}
where $M_{\delta_{S,2}g_{\check f}}$ is the $($unique$)$ function defined in Proposition $\ref{prop:new_sew_map}$. 
\end{definition}

\begin{remark}
{\rm Different choices of the function $\mathbb{X}$ lead to different values of the convolution integral. For example, if we choose $\mathbb{X}(s,t)=\frac{1}{2}(x(t)-x(s))^2$ for every $(s,t)\in [a,b]^2_{<}$ and $x$ is a trajectory of a one dimensional Brownian motion, then \eqref{def_int_conv_rough} defines a trajectory of the Stratonovich convolution integral. On the other hand, if we take $\mathbb{X}(s,t)=\frac{1}{2}(x(t)-x(s))^2+\frac{t-s}{2}$ for every $(s,t)\in [a,b]^2_{<}$ and $x$ is as above, then \eqref{def_int_conv_rough} defines a trajectory of It\^{o} convolution integral.}    
\end{remark}

As a consequence of formula \eqref{S_incr_A} and Lemma \ref{lemma:def_rough_conv_int}, we obtain the following estimate.

\begin{lemma}
\label{lem:reg_int_conv_rough}
Under Hypotheses $\ref{hyp:x}$ and $\ref{hyp:X_2}$, fix $\alpha\in (1-2\eta,1)$, $\lambda\in [0,2\eta+\alpha-1)$, $\omega\in [\lambda,1)$ and $\omega_1\in [\alpha,\eta+\alpha]$. Further assume that
$f\in C([a,b];E)\cap B([a,b];E_{\eta+\alpha})$ admits SG-derivative $\check{f}\in C([a,b];E)\cap B([a,b];E_{\omega_1})$ with remainder $R^F\in \mathscr C^{\eta+\alpha-\lambda}([a,b]^2_<;E_\lambda)$ and that $\hat\delta_1\check{f}\in \mathscr C^{\alpha}([a,b]^2_<;E_{\omega})$.
Then, 
$\mathscr I_{Sf}$ belongs to $\mathscr C^{\theta}([a,b]^2_<;E_\xi)$ for every $\xi\in[\lambda,\lambda+1)$, where $\theta=\eta-(\xi-\eta-\alpha)^+$. Moreover, for every $\xi\in[\lambda,\lambda+1)$ there exists a positive 
constant $C$, which depends on $\eta$, $\alpha$, $\xi$, $\lambda$, $\omega$, $\omega_1$ and $b-a$, and does not blow up as $b-a$ tends to zero, such that
\begin{align}
\|\mathscr I_{Sf}\|_{\mathscr C^{\theta}([a,b]^2_<;E_\xi)}
\leq   C\Big(& \|f\|_{B([a,b];E_{\eta+\alpha})}%\sup_{s\in[a,b]}|f(s)|_{E_{\omega_1}}
[x]_{C^\eta([a,b])}%(b-a)^{\eta+\omega_1-\theta} 
\notag \\
& +\|\check f\|_{B([a,b];E_{\omega_1})}%\sup_{s\in[a,b]}|\check f(s)|_{E_{\omega_2}}
\|\mathbb X\|_{\mathscr C^{2\eta}([a,b]_<^2)}(b-a)^{\eta-(\xi-\omega_1)^++(\xi-\eta-\alpha)^+}  \notag \\
& +[x]_{C^\eta([a,b])}\|R^F\|_{\mathscr C^{\eta+\alpha-\lambda}([a,b]^2_<;E_\lambda)}(b-a)^{2\eta+\alpha-(\xi\vee\lambda)-\theta}\notag\\
&+\|\mathbb X\|_{\mathscr C^{2\eta}([a,b]^2_<)}\|\hat\delta_1\check{f}\|_{\mathscr C^\alpha([a,b]^2_<;E_\omega)}(b-a)^{2\eta+\alpha-(\xi\vee\lambda)-\theta}\Big ).
\label{stima_int_completa}
\end{align}
\end{lemma}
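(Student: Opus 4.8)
The plan is to estimate separately the two pieces appearing in the defining formula \eqref{def_int_conv_rough}, namely $S(t-s)g_{\check f}(s,t)$ and $M_{\delta_{S,2}g_{\check f}}(s,t)$, and then, in each resulting bound, to factor out $(t-s)^\theta$, absorbing the leftover powers of $t-s$ into factors of $b-a$. Throughout I will use the unified smoothing estimate $\|S(\tau)\|_{\mathscr L(E_\zeta,E_\xi)}\le C\tau^{-(\xi-\zeta)^+}$, valid for $\tau\in(0,b-a]$, which follows from \eqref{stime_smgr}(a) when $\xi>\zeta$ and, when $\xi\le\zeta$, from the boundedness of $S(\tau)$ on $E_\zeta$ (case $\zeta=\lambda$ of \eqref{stime_smgr}(a)) together with the continuous embedding $E_\zeta\hookrightarrow E_\xi$. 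Observe first that $g_{\check f}$ vanishes on the diagonal $\{s=t\}$, since both $x(t)-x(s)$ and $\mathbb X(s,t)$ do, and $M_{\delta_{S,2}g_{\check f}}$ vanishes there by Proposition \ref{prop:new_sew_map}; hence $\mathscr I_{Sf}$ vanishes on the diagonal, as required for membership in $\mathscr C^\theta([a,b]^2_<;E_\xi)$.

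For the first piece, recall from \eqref{funct-A} that $g_{\check f}(s,t)=(x(t)-x(s))f(s)+\mathbb X(s,t)\check f(s)$, and I treat its two summands independently. For $(x(t)-x(s))f(s)$ I combine $|x(t)-x(s)|\le[x]_{C^\eta([a,b])}(t-s)^\eta$, the bound $|f(s)|_{E_{\eta+\alpha}}\le\|f\|_{B([a,b];E_{\eta+\alpha})}$, and $\|S(t-s)\|_{\mathscr L(E_{\eta+\alpha},E_\xi)}\le C(t-s)^{-(\xi-\eta-\alpha)^+}$ to produce a bound of order $(t-s)^{\eta-(\xi-\eta-\alpha)^+}=(t-s)^\theta$, which reproduces the first summand of \eqref{stima_int_completa} with no residual power of $b-a$. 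For $\mathbb X(s,t)\check f(s)$ I use $|\mathbb X(s,t)|\le\|\mathbb X\|_{\mathscr C^{2\eta}([a,b]^2_<)}(t-s)^{2\eta}$, the bound $|\check f(s)|_{E_{\omega_1}}\le\|\check f\|_{B([a,b];E_{\omega_1})}$, and $\|S(t-s)\|_{\mathscr L(E_{\omega_1},E_\xi)}\le C(t-s)^{-(\xi-\omega_1)^+}$, obtaining a bound of order $(t-s)^{2\eta-(\xi-\omega_1)^+}$; dividing by $(t-s)^\theta$ leaves the exponent $\eta-(\xi-\omega_1)^++(\xi-\eta-\alpha)^+$, which I dominate by $(b-a)$ raised to this power, recovering the second summand of \eqref{stima_int_completa}.

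For the second piece I first invoke Lemma \ref{lemma:def_rough_conv_int} with $\rho=\eta+\alpha-\lambda$ and $\zeta=\alpha$: since $\lambda\ge0$ and $\omega\ge\lambda$, one gets $\delta_{S,2}g_{\check f}\in\mathscr C^\mu([a,b]^3_<;E_\beta)$ with $\mu=2\eta+\alpha-\lambda$ and $\beta=\lambda$, along with bound \eqref{stima-A}. The restriction $\lambda<2\eta+\alpha-1$ ensures $\mu>1$, so Proposition \ref{prop:new_sew_map} applies. I then apply that proposition with $\varepsilon=\xi-\lambda$, which lies in $[0,1)$ exactly because $\xi\in[\lambda,\lambda+1)$; this yields $M_{\delta_{S,2}g_{\check f}}\in\mathscr C^{\mu-\varepsilon}([a,b]^2_<;E_{\beta+\varepsilon})=\mathscr C^{2\eta+\alpha-\xi}([a,b]^2_<;E_\xi)$, with norm controlled by $\|\delta_{S,2}g_{\check f}\|_{\mathscr C^\mu([a,b]^3_<;E_\beta)}$. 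Since $2\eta+\alpha-\xi\ge\theta$, with equality precisely when $\xi\ge\eta+\alpha$, a function in $\mathscr C^{2\eta+\alpha-\xi}$ lies in $\mathscr C^\theta$ at the cost of a factor $(b-a)^{2\eta+\alpha-\xi-\theta}$; using $\xi\vee\lambda=\xi$ this exponent equals $2\eta+\alpha-(\xi\vee\lambda)-\theta$, and inserting \eqref{stima-A} reproduces the third and fourth summands of \eqref{stima_int_completa}.

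The only genuinely delicate point, and the bulk of the routine work, is the bookkeeping of the H\"older exponents: one must check that each of the four residual powers of $b-a$ is nonnegative, so that the claimed membership $\mathscr I_{Sf}\in\mathscr C^\theta([a,b]^2_<;E_\xi)$ indeed holds and the constant does not blow up as $b-a\to0$. The residual power of the first summand is zero; the $M$-piece residual power simplifies to $(\eta+\alpha-\xi)^+\ge0$; and the $\mathbb X\check f$ residual power is seen to be nonnegative by distinguishing the signs of $\xi-\omega_1$ and $\xi-\eta-\alpha$ and invoking $\alpha\le\omega_1\le\eta+\alpha$. Combining the two pieces through the triangle inequality, and recalling $\theta=\eta-(\xi-\eta-\alpha)^+$ throughout, then yields \eqref{stima_int_completa} and completes the proof.
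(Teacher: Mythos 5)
Your proof is correct and follows essentially the same route as the paper's: you bound $S(t-s)g_{\check f}(s,t)$ directly via the semigroup smoothing estimates and treat $M_{\delta_{S,2}g_{\check f}}$ through Lemma \ref{lemma:def_rough_conv_int} (with $\rho=\eta+\alpha-\lambda$, $\zeta=\alpha$) combined with Proposition \ref{prop:new_sew_map} at $\varepsilon=\xi-\lambda$, then trade the excess H\"older exponent for powers of $b-a$. The only cosmetic difference is that the paper's proof also covers $\xi\in[0,\lambda]$ via the embedding $E_\lambda\hookrightarrow E_\xi$, whereas you restrict to the stated range $\xi\in[\lambda,\lambda+1)$, which suffices for the lemma as formulated.
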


\begin{proof}
To prove \eqref{stima_int_completa}, we estimate the two terms which appear in the definition of ${\mathscr I}_{Sf}$ (see \eqref{def_int_conv_rough}). For this purpose, we fix $\xi\in[0,\lambda+1)$ and, taking  \eqref{stime_smgr} into account, we get
\begin{align*}
|S(t-s)g_{\check f}(s,t)|_{E_{\xi}}\le & 
|S(t-s)f(s)(x(t)-x(s))|_{E_\xi}
+|S(t-s)\check f(s)\mathbb X(s,t)|_{E_\xi}\notag\\
\le & c_{\alpha,\eta,\xi}\|f\|_{B([a,b];E_{\eta+\alpha})}[x]_{C^\eta([a,b])}|t-s|^{\eta-(\xi-\eta-\alpha)^+}\notag\\
&+c_{\omega_1,\xi}\|\check f\|_{B([a,b];E_{\omega_1})}\|\mathbb X\|_{\mathscr C^{2\eta}([a,b]_<^2)}|t-s|^{2\eta-(\xi-\omega_1)^+}
\end{align*}
for every $(s,t)\in[a,b]^2_<$ and some positive constants $c_{\alpha,\eta,\xi}$ and $c_{\omega_1,\xi}$, so that
the function $(s,t)\mapsto S(t-s)g_{\check f}(s,t)$ belongs to
$\mathscr{C}^{\theta}([a,b]^2_{<};E_{\xi})$ for every $\xi\in [0,\lambda+1)$ and
\begin{align}
\frac{|S(t-s)g_{\check f}(s,t)|_{E_{\xi}}}{|t-s|^{\theta}}
\le & c_1\|f\|_{B([a,b];E_{\eta+\alpha})}[x]_{C^\eta([a,b])}\notag\\
&+c_1\|\check f\|_{B([a,b];E_{\omega_1})}\|\mathbb X\|_{\mathscr C^{2\eta}([a,b]_<^2)}(b-a)^{\eta-(\xi-\omega_1)^++(\xi-\eta-\alpha)^+}
\label{stima-If-1}
\end{align}
for every $(s,t)\in [a,b]^2_{<}$ and some positive constant $c_1$, which depends on $\alpha$, $\eta$, $\xi$, $\omega_1$ and $b-a$.

Finally, from \eqref{stima-A} and \eqref{Mg} we deduce that
\begin{align}
&\|M_{\delta_{S,2}g_{\check f}}\|_{\mathscr C^{2\eta+\alpha-\lambda-\varepsilon}([a,b]^2_<;E_{\lambda+\varepsilon})}
\notag\\
\leq &  c_2\big([x]_{C^\eta([a,b])}\|R^F\|_{\mathscr C^{\eta+\alpha-\lambda}([a,b]^2_<;E_\lambda)}+\|\mathbb X\|_{\mathscr C^{2\eta}([a,b]^2_<)}\|\hat\delta_1\check{f}\|_{\mathscr C^\alpha([a,b]^2_<;E_\omega)}\big)
\label{stima_M}
\end{align}    
for every $\varepsilon\in [0,1)$ and some positive constant $c_2$, which depends on $\varepsilon$,
$\lambda$, $\omega$, $\alpha$, $\eta$  and $b-a$, and it does not blow up as $b-a$ tends to zero. 

Now, we distinguish the cases $\xi\in [0,\lambda]$ and $\xi\in (\lambda,\lambda,1)$. In the former case,
recalling that $E_{\xi}$ is continuously embedded into $E_{\lambda}$, from
\eqref{stima_M} it follows that
$M_{\delta_{S,2}g_{\check f}}\in\mathscr{C}^{2\eta+\alpha-\lambda}([a,b]^2_{<};E_{\xi})$. In the latter case, taking $\varepsilon=\xi-\lambda$ in \eqref{stima_M}, we conclude that $M_{\delta_{S,2}g_{\check f}}\in\mathscr{C}^{2\eta+\alpha-\xi}([a,b]^2_{<};E_{\xi})$. In both the cases, we can estimate
\begin{align}
&\|M_{\delta_{S,2}g_{\check f}}\|_{\mathscr{C}^{2\eta+\alpha-(\xi\vee\lambda)}([a,b]^2_{<},E_\xi)}\notag\\
\le & c_3\big([x]_{C^\eta([a,b])}\|R^F\|_{\mathscr C^{\eta+\alpha-\lambda}([a,b]^2_<;E_\lambda)}+\|\mathbb X\|_{\mathscr C^{2\eta}([a,b]^2_<)}\|\hat\delta_1\check{f}\|_{\mathscr C^\alpha([a,b]^2_<;E_\omega)}\big)
\label{stima-If-2}
\end{align}
for some positive constant $c_3$, which depends on $\lambda$, $\omega$, $\alpha$, $\eta$, $\xi$ and $b-a$ and stays bounded as $b-a$ tends to zero.
Since $2\eta+\alpha-(\xi\vee\lambda)\ge\theta$, combining \eqref{stima-If-1} and \eqref{stima-If-2}, the assertion follows. 
\end{proof}

\begin{remark}
\label{rmk:un_rough_young_int}
{\rm    
\begin{enumerate}[\rm (i)]
\item 
In fact, the construction of the rough convolution integral can be obtained using the above arguments, also without the extra condition \eqref{cond-x}. In such a new situation, the SG-derivative is not unique and the definition of the convolution integral depends also on the SG-derivative which we choose.
\item 
If $\eta>\frac12$ and $f$ admits SG-derivative $\check f$ with $\hat\delta_1\check f\in \mathscr C^{\zeta}([a,b]^2_<;E)$ and remainder $R^F\in\mathscr C^\rho([a,b]^2_<;E)$ with $\rho>\eta$ and $(\eta+\rho)\wedge (\zeta+2\eta)>1$, then the rough convolution integral $\mathscr I_{Sf}$, defined in \eqref{def_int_conv_rough}, coincides with the Young convolution integral defined by \eqref{new-def}, i.e.,
\begin{align}
\;\;\;\;\;\;\;\mathscr I^{{\it Young}}_{Sf}(s,t)=S(t-s)g_0(s,t)-M_{\delta_{S,2}g_0}(s,t),\qquad\;\,(s,t)\in [a,b]^2_{<},
\label{stelle}
\end{align}
where $g_0$ is defined by \eqref{g0}. Indeed, the integral in \eqref{stelle} is well-defined, since from the definition of SG-derivative it follows that the function $\hat\delta_1f$ belongs to $\mathscr{C}^{\eta}([a,b]^2_{<};E)$, so that $\delta_{S,2}g_0$ belongs to $\mathscr{C}^{2\eta}([a,b]^2_{<};E)$ and $2\eta>1$.

Let us prove that $\mathscr I_{Sf}$ and $\mathscr I^{{\it Young}}_{Sf}$ actually coincide. We need to show that 
\begin{align}
\label{diff_int_rough_young}
S(t-s)\check f(s)\mathbb X(s,t)-M_{\delta_{S,2}g_{\check{f}}}(s,t)+M_{\delta_{S,2}g_0}(s,t)=0
\end{align}
for every $(s,t)\in [a,b]^2_{<}$.
If we set $\mathscr A (s,t):=\check f(s)\mathbb X(s,t)$ for every $(s,t)\in[a,b]^2_<$, then 
$\delta_{S,2}g_{\check{f}}
= \delta_{S,2}g_0 +\delta_{S,2}\mathscr A$ and, arguing as in \eqref{S_incr_A}, we get
\begin{align*}
\;\;\;\;\;\;\;(\delta_{S,2}\mathscr A)(s,t,u)
= & S(t-s)\check f(s)(x(t)-x(s))(x(u)-x(t))
-(\hat\delta_1\check f)(s,t)\mathbb X(t,u)
\end{align*}
for every $(s,t,u)\in [a,b]^3_<$. Therefore, $\delta_{S,2}g_0, \delta_{S,2}\mathscr A \in \mathscr C^{2\eta}([a,b]^2_<;E)$ with $2\eta>1$ and so, from Remark \ref{rmk:lin_M}(i), $M_{\delta_{S,2}g_{\check{f}}}=M_{\delta_{S,2}g_0}+M_{\delta_{S,2}\mathscr A}$. Using this identity and the last part of the statement of Proposition \ref{prop:new_sew_map}, which shows that 
$S(t-s)\mathscr A (s,t)=M_{\delta_{S,2}\mathscr A}(s,t)$
for every $(s,t)\in [a,b]^2_{<}$, we easily deduce that formula \eqref{diff_int_rough_young} is satisfied. 
\end{enumerate}}
\end{remark}

In the following lemma, we exploit some basic properties of the rough convolution integral.

\begin{lemma}
\label{lemma:lin_int}
The following properties are satisfied:
\begin{enumerate}[\rm (i)]
\item 
If $f_1$ and $f_2$ are as in Definition $\ref{def:def_rough_conv_int}$, then $\mathscr I_{S(f_1+f_2)}=\mathscr I_{Sf_1}+\mathscr I_{Sf_2}$;
\item 
Let $f$ be as in Definition $\ref{def:def_rough_conv_int}$.
%$A$ be defined as in \eqref{funct-A}, where $f,\check f,x,\mathbb X$ satisfy the same assumptions as in Lemma \ref{lemma:def_rough_conv_int}. 
Then,
\begin{align}
\label{spez_int}
\mathscr I_{Sf}(r,t)=S(t-s)\mathscr I_{Sf}(r,s)+\mathscr I_{Sf}(s,t), \qquad\;\,  (r,s,t)\in[a,b]^3_<.    
\end{align}
\end{enumerate}
\end{lemma}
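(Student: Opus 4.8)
The plan is to prove both parts directly from the explicit formula \eqref{def_int_conv_rough}, namely $\mathscr I_{Sf}(s,t)=S(t-s)g_{\check f}(s,t)-M_{\delta_{S,2}g_{\check f}}(s,t)$, exploiting the linearity of the map $f\mapsto(g_{\check f},\delta_{S,2}g_{\check f})$ and of the sewing map $g\mapsto M_g$.

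For part (i), I would first record that the data $(g_{\check f},R^F,\hat\delta_1\check f)$ depend linearly on $f$: from Remark \ref{rmk:lin_T_der}(i) the sum $f_1+f_2$ has SG-derivative $\check f_1+\check f_2$ with remainder $R^{F_1}+R^{F_2}$, and from definition \eqref{funct-A} we immediately get $g_{\check f_1+\check f_2}=g_{\check f_1}+g_{\check f_2}$, whence $\delta_{S,2}g_{\check f_1+\check f_2}=\delta_{S,2}g_{\check f_1}+\delta_{S,2}g_{\check f_2}$. Since each summand satisfies the hypotheses of Proposition \ref{prop:new_sew_map}, Remark \ref{rmk:lin_M}(i) gives $M_{\delta_{S,2}g_{\check f_1+\check f_2}}=M_{\delta_{S,2}g_{\check f_1}}+M_{\delta_{S,2}g_{\check f_2}}$. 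Plugging these two linearity identities into \eqref{def_int_conv_rough} yields $\mathscr I_{S(f_1+f_2)}=\mathscr I_{Sf_1}+\mathscr I_{Sf_2}$. This part is essentially bookkeeping and should be short.

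For part (ii) the heart of the matter is the additivity relation \eqref{spez_int}, which is the semigroup analogue of the Chasles/additivity property of an integral. My plan is to compute $S(t-s)\mathscr I_{Sf}(r,s)+\mathscr I_{Sf}(s,t)-\mathscr I_{Sf}(r,t)$ using \eqref{def_int_conv_rough} and show it vanishes. Writing everything out, the $S(t-s)g_{\check f}$ terms combine via the semigroup law and the explicit form \eqref{funct-A} of $g_{\check f}$, while the three $M$-terms must be reassembled. The natural route is to consider the difference $\Phi(r,t):=\mathscr I_{Sf}(r,t)-S(t-s)\mathscr I_{Sf}(r,s)-\mathscr I_{Sf}(s,t)$ (for fixed $s$) as a function on $[a,b]^2_<$ and to show it is identically zero by a uniqueness argument of the same flavour as the one used in the SG-derivative lemma: one checks that $\Phi$ has enough H\"older regularity (indeed $\mathscr I_{Sf}\in\mathscr C^\theta$ by Lemma \ref{lem:reg_int_conv_rough}, with $\theta>1/2$ under the standing hypotheses) and that its associated increment $\hat\delta_2\Phi$ vanishes, so that by the sewing uniqueness (the $\hat\delta_2$-injectivity on the relevant H\"older class, as in \cite[Proposition 3.4]{GT} invoked after \eqref{new-def}) one concludes $\Phi\equiv0$.

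The main obstacle I anticipate is the algebra of the $M$-terms in part (ii): one must verify the increment identity $\hat\delta_2\big(S(\cdot-s)M_{\delta_{S,2}g_{\check f}}(r,\cdot)+\cdots\big)=0$, which requires carefully combining the defining property $(\hat\delta_2 M_g)(r,s,t)=S(t-s)g(r,s,t)$ with the cocycle-type relation \eqref{form-diff-X} for $\mathbb X$ and the semigroup law applied to the $S(t-s)$ prefactors. The computation parallels the verification in \eqref{S_incr_A} but now at the level of the two-variable integral rather than $\delta_{S,2}g_{\check f}$, so the bookkeeping of which variable each $S(\cdot)$ acts on is where errors are likely to creep in; once $\hat\delta_2\Phi\equiv0$ and the regularity of $\Phi$ are established, uniqueness closes the argument immediately.
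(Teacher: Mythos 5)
Part (i) of your plan is correct and coincides with the paper's proof: linearity of the SG-derivative (Remark \ref{rmk:lin_T_der}(i)) and of the sewing map (Remark \ref{rmk:lin_M}(i)), substituted into \eqref{def_int_conv_rough}.

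Part (ii) is where there is a genuine gap: the uniqueness argument you propose cannot be run. First, the injectivity of $\hat\delta_2$ (the uniqueness half of the sewing lemma, i.e.\ \cite[Proposition 3.4]{GT}) holds only on classes $\mathscr C^{\mu}$ with $\mu>1$; below exponent $1$ the kernel of $\hat\delta_2$ is huge, since it contains $\hat\delta_1 z$ for every continuous $z$ (compare Remark \ref{rem-country}, where $\ker\delta_2={\rm Im}\,\delta_1$). Your regularity input is both misquoted and insufficient: Lemma \ref{lem:reg_int_conv_rough} gives $\mathscr I_{Sf}\in\mathscr C^{\theta}$ with $\theta=\eta-(\xi-\eta-\alpha)^{+}\leq\eta\leq\frac12$, so $\theta>\frac12$ never holds in the present regime $\eta\in\left(\frac13,\frac12\right]$, and even if it did, an exponent below $1$ would not reach the threshold needed for injectivity. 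Establishing a priori that your defect $\Phi$ has regularity above $1$ would require exactly the cancellation you are postponing, so the plan is circular. Second, there is a structural problem: with $s$ fixed, $\Phi(r,t)$ is defined only for $r\leq s\leq t$, not on all of $[a,b]^2_<$, and since $\mathscr I_{Sf}$ vanishes on the diagonal one has $\Phi(r,s)=\Phi(s,t)=0$; hence the only increments $(\hat\delta_2\Phi)(r,\tau,t)$ that make sense are those with middle point $\tau=s$, and for these $(\hat\delta_2\Phi)(r,s,t)=\Phi(r,t)$. In other words, ``checking that $\hat\delta_2\Phi$ vanishes'' is verbatim the identity \eqref{spez_int} you are trying to prove, and the uniqueness step adds nothing.

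The repair is the paper's direct computation, which needs no uniqueness at all. Observe that \eqref{spez_int} is exactly the statement $(\hat\delta_2\mathscr I_{Sf})(r,s,t)=0$, and $\hat\delta_2$ can be evaluated separately on the two building blocks of \eqref{def_int_conv_rough}: setting $\psi(s,t):=S(t-s)g_{\check f}(s,t)$, the semigroup law gives $(\hat\delta_2\psi)(r,s,t)=S(t-s)(\delta_{S,2}g_{\check f})(r,s,t)$, while Proposition \ref{prop:new_sew_map} gives $(\hat\delta_2 M_{\delta_{S,2}g_{\check f}})(r,s,t)=S(t-s)(\delta_{S,2}g_{\check f})(r,s,t)$. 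Subtracting, $\hat\delta_2\mathscr I_{Sf}\equiv 0$ on $[a,b]^3_<$, which is \eqref{spez_int}; this three-line regrouping is precisely the ``bookkeeping'' you anticipated, but once it is done there is nothing left to prove.
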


\begin{proof}
(i) As we have observed in Remark \ref{rmk:lin_T_der}(i), the function $f=f_1+f_2$ admits SG-derivative, which is the sum of the SG-derivative of $f_1$ and $f_2$. It thus follows that
$g_{\check f}=g_{\check f_1}+g_{\check f_2}$ and, consequently, from the linearity of the function $g\mapsto M_g$ (see Remark \ref{rmk:lin_M}) we get
\begin{align*}
\mathscr{I}_{S(f_1+f_2)}(s,t)=&S(t-s)(f_1(s)+f_2(s))(x(t)-x(s))-M_{\delta_{S,2}\check g}(s,t)\\
=&S(t-s)f_1(s)-M_{\delta_{S,2}g_{\check f_1}}(s,t)
+S(t-s)f_2(s)-M_{\delta_{S,2}g_{\check f_2}}(s,t)\\
=&\mathscr{I}_{Sf_1}(s,t)+\mathscr{I}_{Sf_2}(s,t) \end{align*}
for every $(s,t)\in[a,b]^2_<$.

(ii) From the definition of $\mathscr I_{Sf}$ we infer that
\begin{align*}
&\mathscr I_{Sf}(r,t)-\mathscr I_{Sf}(s,t)\\
= & S(t-r)g_{\check f}(r,t)-M_{\delta_{S,2}g_{\check f}}(r,t)-S(t-s)g_{\check f}(s,t)+M_{\delta_{S,2}g_{\check f}}(s,t) \\
= & S(t-s)[S(s-r)g_{\check f}(r,t)-g_{\check f}(s,t)-S(s-r)g_{\check f}(r,s)] \\
& -[M_{\delta_{S,2}g_{\check f}}(r,t)-M_{\delta_{S,2}g_{\check f}}(s,t)+S(t-s)M_{\delta_{S,2}g_{\check f}}(r,s)]\\
& +S(t-s)[S(s-r)g_{\check f}(r,s)-M_{\delta_{S,2}g_{\check f}}(r,s)] \\
= & S(t-s)(\delta_{S,2}g_{\check f})(r,s,t)-(\hat\delta_2M_{\delta_{S,2}g_{\check f}})(r,s,t)+S(t-s)\mathscr I_{Sf}(r,s) \\
= & S(t-s)\mathscr I_{Sf}(r,s) 
\end{align*}
for every $r,s,t\in[a,b]$, with $r\le s\le t$, since from Proposition \ref{prop:new_sew_map} it follows that   
\begin{align*}
(\hat\delta_2M_{\delta_{S,2}g_{\check{f}}})(r,s,t)=S(t-s)(\delta_{S,2}g_{\check{f}})(r,s,t), \qquad\;\, (r,s,t)\in[a,b]^3_<.
\end{align*}
The proof is complete.
\end{proof}

For further use, we need to introduce the rough integral of a ``good'' function with respect to the pair $(x,\mathbb X)$, which in fact coincides with the convolution integral introduced in Definition \ref{def:def_rough_conv_int} when $S(t)=I$ for every $t\geq0$. Indeed, in this case, Hypotheses \ref{hyp-main}, except (iii)-(a), are fulfilled and $\hat\delta_1=\delta_1$, $\hat\delta_2=\delta_2=\delta_{S,2}$. The following proposition collects analogous results to those in  Proposition \ref{prop:new_sew_map} and Lemma \ref{lemma:def_rough_conv_int}.

\begin{proposition}
\label{prop:old_sew_map}
% Let Hypotheses $\ref{hyp:x}$ and $\ref{hyp:X_2}$ be satisfied and let $f:[a,b]\to E$ be a continuous function which admits Gubinelli derivative $\check{f}\in C^\zeta([a,b];E_\omega)$ with remainder $\widetilde R^F\in \mathscr C^\rho([a,b]^2_<;E_\lambda)$ with $\mu=(\rho+\eta)\wedge(\zeta+2\eta)>1$.
% Then, the rough integral $\mathscr{I}_f$ of the function $f$ with respect to the path $x$ is well defined by the formula
% \begin{align}
% \label{def_int_rough}
% \mathscr I_{f}(s,t)
% = g_{\check f}(s,t)-M_{\delta_2g_{\check f}}(s,t),\qquad\;\,(s,t)\in[a,b]^2_{<},
% \end{align}
% where $g_{\check f}$ is given by 
% \eqref{funct-A}.
% Moreover, there exists a positive constant $c_*$,
% which depends on $\lambda$, $\omega$, $\rho$, $\eta$, $\zeta$, and $b-a$, such that
% \begin{align*}
% &\|{\mathscr I}_f-g_{\check f}\|_{\mathscr C^{\mu}([a,b]^2_<;E_{\beta})}\\
% \leq &  c_*\big([x]_{C^\eta([a,b])}\|\widetilde R^F\|_{\mathscr C^\rho([a,b]^2_<;E_\lambda)}+\|\mathbb X\|_{\mathscr C^{2\eta}([a,b]^2_<)}\|\check{f}\|_{C^\zeta([a,b];E_\omega)}\big),
% \end{align*}
% where $\beta:=\lambda\wedge\omega$.
Let Hypotheses $\ref{hyp:x}$ and $\ref{hyp:X_2}$ be satisfied and let $f:[a,b]\to E$ be a continuous function which admits Gubinelli derivative $\check{f}\in C^\zeta([a,b];E_\omega)$ with remainder $\widetilde R^F\in \mathscr C^\rho([a,b]^2_<;E_\lambda)$ and $\mu=(\rho+\eta)\wedge(\zeta+2\eta)>1$.
Then, the rough integral $\mathscr{I}_f$ of the function $f$ with respect to the path $x$ is well-defined by the formula
\begin{align}
\label{def_int_rough}
\mathscr I_{f}(s,t)
= g_{\check f}(s,t)-M_{\delta_2g_{\check f}}(s,t),\qquad\;\,(s,t)\in[a,b]^2_{<},
\end{align}
where $g_{\check f}$ is given by 
\eqref{funct-A}.
Moreover, there exists a positive constant $c_*$,
which depends on $\lambda$, $\omega$, $\rho$, $\eta$, $\zeta$, and $b-a$, such that
\begin{align*}
&\|{\mathscr I}_f-g_{\check f}\|_{\mathscr C^{\mu}([a,b]^2_<;E_{\lambda\wedge\omega})}\\
\leq &  c_*\big([x]_{C^\eta([a,b])}\|\widetilde R^F\|_{\mathscr C^\rho([a,b]^2_<;E_\lambda)}+\|\mathbb X\|_{\mathscr C^{2\eta}([a,b]^2_<)}\|\check{f}\|_{C^\zeta([a,b];E_\omega)}\big).
\end{align*}
\end{proposition}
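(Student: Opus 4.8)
The plan is to obtain Proposition \ref{prop:old_sew_map} as the special case $S(t)\equiv I$ of the machinery already developed for the genuine semigroup case. The key observation, already flagged in the paragraph preceding the statement, is that when $S(t)=I$ for every $t\ge 0$ the objects $\hat\delta_1$ and $\delta_1$ coincide, and likewise $\hat\delta_2=\delta_2=\delta_{S,2}$; moreover the triple $(S(t))_{t\ge0}=(I)_{t\ge0}$ satisfies all of Hypotheses \ref{hyp-main} except (iii)-(a) (the smoothing estimate \eqref{stime_smgr}(a)), since the identity trivially satisfies \eqref{stime_smgr}(b) with $C_{\mu,\nu,T}=1$, is injective, and maps each $E_\lambda$ into itself. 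First I would verify that the Gubinelli-derivative hypothesis translates correctly: by Remark \ref{rmk:lin_T_der}(ii), a function $f$ admitting a Gubinelli derivative $\check f$ with remainder $\widetilde R^F$ in the sense stated is exactly a function admitting an SG-derivative when $S(t)=I$, since $(\hat\delta_1 f)(s,t)=f(t)-f(s)$ in this case and \eqref{formula_S_der} reduces to the Gubinelli expansion. Thus the regularity hypotheses $\check f\in C^\zeta([a,b];E_\omega)$ and $\widetilde R^F\in\mathscr C^\rho([a,b]^2_<;E_\lambda)$ are precisely what Definition \ref{def:def_rough_conv_int} requires, once one notes that $\hat\delta_1\check f=\delta_1\check f$ has the same H\"older regularity $\zeta$ as $\check f$ itself on the diagonal increments.

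The central analytic step is to invoke Remark \ref{rmk:lin_M}(ii), which states precisely that when $(S(t))_{t\ge0}$ satisfies Hypotheses \ref{hyp-main} save for (iii)-(a), the sewing map still produces a unique $M_g\in\mathscr C^\mu([a,b]^2_<;E_\beta)$ with $(\hat\delta_2 M_g)(r,s,t)=S(t-s)g(r,s,t)$ and with estimate \eqref{Mg} holding at $\varepsilon=0$. Applying this with $S(t)=I$ and $g=\delta_{S,2}g_{\check f}=\delta_2 g_{\check f}$ yields the function $M_{\delta_2 g_{\check f}}\in\mathscr C^\mu([a,b]^2_<;E_{\lambda\wedge\omega})$ used in \eqref{def_int_rough}; here $\beta=\lambda\wedge\omega$ and $\mu=(\rho+\eta)\wedge(\zeta+2\eta)>1$ are exactly the exponents delivered by Lemma \ref{lemma:def_rough_conv_int}. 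To make that lemma applicable I would check its hypotheses with $S(t)=I$: the computation \eqref{S_incr_A} simplifies to $(\delta_2 g_{\check f})(s,t,u)=-\widetilde R^F(s,t)(x(u)-x(t))-(\delta_1\check f)(s,t)\mathbb X(t,u)$, and Lemma \ref{lemma:def_rough_conv_int} then gives both the membership $\delta_2 g_{\check f}\in\mathscr C^\mu([a,b]^3_<;E_{\lambda\wedge\omega})$ and the bound \eqref{stima-A}. Combining \eqref{stima-A} with \eqref{Mg} (at $\varepsilon=0$) produces the claimed estimate for $\|\mathscr I_f-g_{\check f}\|_{\mathscr C^\mu}=\|M_{\delta_2 g_{\check f}}\|_{\mathscr C^\mu}$, the constant $c_*$ inheriting its dependence on $\lambda,\omega,\rho,\eta,\zeta,b-a$ from the two invoked results.

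The only genuine care needed is bookkeeping rather than a conceptual obstacle: one must confirm that in the $S(t)=I$ setting the term $\|\hat\delta_1\check f\|_{\mathscr C^\zeta([a,b]^2_<;E_\omega)}$ appearing in \eqref{stima-A} is controlled by $\|\check f\|_{C^\zeta([a,b];E_\omega)}$, which is immediate since $(\delta_1\check f)(s,t)=\check f(t)-\check f(s)$ and hence $\|\delta_1\check f\|_{\mathscr C^\zeta}\le[\check f]_{C^\zeta([a,b];E_\omega)}\le\|\check f\|_{C^\zeta([a,b];E_\omega)}$. With this substitution the right-hand side of \eqref{stima-A} matches the stated bound. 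I therefore expect the entire proof to be a short verification that the quoted results apply with the identity semigroup, the main (mild) obstacle being to state explicitly which of Hypotheses \ref{hyp-main} survive and to confirm that the well-posedness condition $\mu>1$ is exactly the hypothesis assumed, so that Proposition \ref{prop:new_sew_map} (via Remark \ref{rmk:lin_M}(ii)) and Lemma \ref{lemma:def_rough_conv_int} may be chained together without any additional smoothing input.
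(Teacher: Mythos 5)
Your proposal is correct and is essentially the paper's own argument: the paper states this proposition without a separate proof, justifying it in the preceding paragraph by exactly the reduction you carry out, namely that the identity semigroup satisfies Hypotheses \ref{hyp-main} except (iii)-(a), so that Remark \ref{rmk:lin_M}(ii) supplies the sewing map with estimate \eqref{Mg} at $\varepsilon=0$, while Lemma \ref{lemma:def_rough_conv_int} (whose key identity \eqref{S_incr_A} involves no smoothing from $S$) controls $\delta_2 g_{\check f}$ in $\mathscr C^{\mu}([a,b]^3_<;E_{\lambda\wedge\omega})$. The two bookkeeping points you flag --- that a Gubinelli derivative is an SG-derivative when $S(t)\equiv I$, and that $\|\delta_1\check f\|_{\mathscr C^{\zeta}([a,b]^2_<;E_\omega)}\le \|\check f\|_{C^{\zeta}([a,b];E_\omega)}$ --- are precisely the checks needed to chain the quoted results together.
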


%\begin{proof}
%The proof follows immediately from Remark %\ref{rmk:lin_M} and Lemma \ref{lemma:def_rough_conv_int}, observing
%that, if $S(t)=I$ for every $t\ge 0$, then 
%$\hat\delta_1=\delta_1$ and $\delta_{S,2}=\delta_2$.
%\end{proof}

We conclude this section with the following remarkable result.

\begin{lemma}
\label{lem:ex_S_der_Gf}
Let $x$ be as in Hypotheses $\ref{hyp:x}$,
fix $\alpha\in (0,1)$ and assume that 
$y\in C([a,b];E)\cap B([a,b];E_{\eta+\alpha})$ admits SG-derivative $\check y\in C([a,b];E)\cap B([a,b];E_{\alpha})$ with remainder $R^Y\in \mathscr C^{\eta+\alpha}([a,b]^2_<;E_{\alpha})$. Further, assume that $G\in\mathscr{L}(E)\cap\mathscr{L}(E_{\alpha})\cap\mathscr{L}(E_{\eta+\alpha})$. Then, the function $Gy$ admits SG-derivative and the SG-derivative is $G\check{y}$. Moreover, 
\begin{align}
\label{forma_rem_GY}
R^{GY}(s,t)= [G,\mathfrak a(s,t)]\check{y}(s)
(x(t)-x(s))+[G,\mathfrak a(s,t)]y(s)+GR^Y(s,t)
\end{align}
for every $(s,t)\in[a,b]^2_<$. In particular,
$R^{GY}\in\mathscr C^{\eta+\alpha}([a,b]^2_<;E)$ and 
\begin{align}
\|R^{GY}\|_{\mathscr C^{\eta+\alpha}([a,b]^2_<;E)}\leq & C_{\alpha,0}\mathfrak{g}_{0,\alpha}[x]_{C^\eta([a,b])}\|\check{y}\|_{B([a,b];E_{\alpha})}\notag \\
& + C_{\eta+\alpha,0}\mathfrak{g}_{0,\eta+\alpha}\|y\|_{B([a,b];E_{\eta+\alpha})}\notag \\
& + K_{\alpha,0}\mathfrak{g}_0\|R^Y\|_{\mathscr C^{\eta+\alpha}([a,b]^2_<;E_{\alpha})},
\label{stima_S_der_Gy}
\end{align}
where $C_{\gamma,\delta}=C_{\gamma,\delta,b-a}$.
\end{lemma}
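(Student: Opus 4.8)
The plan is to compute the increment $(\hat\delta_1(Gy))(s,t)=Gy(t)-S(t-s)Gy(s)$ directly from the defining relation of the SG-derivative of $y$, and to read off from the resulting expression both the SG-derivative of $Gy$ and its remainder. Concretely, I would first rewrite the hypothesis that $\check{y}$ is the SG-derivative of $y$ as $y(t)=S(t-s)y(s)+S(t-s)\check{y}(s)(x(t)-x(s))+R^Y(s,t)$, apply $G$ to both sides, and subtract $S(t-s)Gy(s)$. The whole computation hinges on the elementary identity $GS(t-s)=S(t-s)G+[G,\mathfrak{a}(s,t)]$, which follows at once from $S(t-s)=I+\mathfrak{a}(s,t)$ together with $[G,I]=0$; note that both $G$ and $\mathfrak{a}(s,t)=S(t-s)-I$ are bounded on $E$, so the commutator is a well-defined bounded operator.

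Inserting this identity in the two places where $GS(t-s)$ occurs, the term $GS(t-s)y(s)-S(t-s)Gy(s)$ collapses to $[G,\mathfrak{a}(s,t)]y(s)$, while $GS(t-s)\check{y}(s)(x(t)-x(s))$ splits into the principal part $S(t-s)(G\check{y})(s)(x(t)-x(s))$ plus the correction $[G,\mathfrak{a}(s,t)]\check{y}(s)(x(t)-x(s))$. Collecting the non-principal contributions together with $GR^Y(s,t)$ produces exactly formula \eqref{forma_rem_GY}. Since $G\check{y}\in C([a,b];E)$ (because $\check{y}\in C([a,b];E)$ and $G\in\mathscr{L}(E)$), and since, as shown next, $R^{GY}\in\mathscr{C}^{\eta+\alpha}([a,b]^2_<;E)$ with $\eta+\alpha>\eta$, this identifies $G\check{y}$ as the SG-derivative of $Gy$ with remainder $R^{GY}$; condition \eqref{cond-x} is inherited from the fact that $y$ itself already admits an SG-derivative.

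It then remains to estimate the three summands of $R^{GY}$ in the $E$-norm, each against $|t-s|^{\eta+\alpha}$. For the two commutator terms I would bound $[G,\mathfrak{a}(s,t)]v=G\mathfrak{a}(s,t)v-\mathfrak{a}(s,t)Gv$ piecewise, using $|G\mathfrak{a}(s,t)v|_E\le\mathfrak{g}_0|\mathfrak{a}(s,t)v|_E\le\mathfrak{g}_0 C_{\gamma,0}|t-s|^{\gamma}|v|_{E_\gamma}$ and $|\mathfrak{a}(s,t)Gv|_E\le C_{\gamma,0}|t-s|^{\gamma}|Gv|_{E_\gamma}\le C_{\gamma,0}\mathfrak{g}_\gamma|t-s|^{\gamma}|v|_{E_\gamma}$, where $\mathfrak{a}(s,t)=S(t-s)-I$ is controlled by \eqref{stime_smgr}(b). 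Taking $\gamma=\alpha$ for the term carrying $\check{y}(s)\in E_\alpha$, and picking up the extra factor $|x(t)-x(s)|\le[x]_{C^\eta([a,b])}|t-s|^{\eta}$, gives the first line of \eqref{stima_S_der_Gy} with constant $C_{\alpha,0}\mathfrak{g}_{0,\alpha}$; taking $\gamma=\eta+\alpha$ for the term carrying $y(s)\in E_{\eta+\alpha}$ gives the second line with constant $C_{\eta+\alpha,0}\mathfrak{g}_{0,\eta+\alpha}$. For the last summand I would use $|GR^Y(s,t)|_E\le\mathfrak{g}_0|R^Y(s,t)|_E\le\mathfrak{g}_0 K_{\alpha,0}|R^Y(s,t)|_{E_\alpha}$ together with $R^Y\in\mathscr{C}^{\eta+\alpha}([a,b]^2_<;E_\alpha)$, producing the third line with constant $K_{\alpha,0}\mathfrak{g}_0$. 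Summing the three contributions yields \eqref{stima_S_der_Gy}.

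I do not anticipate a genuine obstacle: the content of the statement is the algebraic identity giving \eqref{forma_rem_GY}, and the only point requiring care is the correct handling of the commutator $[G,\mathfrak{a}(s,t)]$. The regularity gain is exactly the effect of commuting $G$ past $S(t-s)$ rather than past $I$: the $S(t-s)G$ part is absorbed into the principal term $S(t-s)(G\check{y})(s)(x(t)-x(s))$, while the genuine mismatch $[G,\mathfrak{a}(s,t)]$ supplies the factor $|t-s|^{\alpha}$ (respectively $|t-s|^{\eta+\alpha}$) from \eqref{stime_smgr}(b), so that $R^{GY}$ inherits the Hölder exponent $\eta+\alpha$ rather than the merely $\eta$-order one would obtain without this cancellation.
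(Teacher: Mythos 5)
Your proposal is correct and follows essentially the same route as the paper: both compute $\hat\delta_1(Gy)$ from the SG-derivative decomposition of $y$, commute $G$ past $S(t-s)$ via the identity $GS(t-s)=S(t-s)G+[G,\mathfrak a(s,t)]$ (the paper phrases this as $[G,S(t-s)]=[G,\mathfrak a(s,t)]$), and then estimate the three resulting remainder terms with \eqref{stime_smgr}(b), exactly as in \eqref{stima_S_der_Gy}. The choice of exponents ($\gamma=\alpha$ with the extra $|t-s|^{\eta}$ from $x$, and $\gamma=\eta+\alpha$) and the handling of $GR^Y$ via the embedding constant $K_{\alpha,0}$ also coincide with the paper's argument.
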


\begin{proof}
Adding and subtracting $G\mathfrak a(s,t)y(s)$ and then $S(t-s)G\check y(s)(x(t)-x(s))$ from $(\hat\delta_1 Gy)(s,t)$, we get
\begin{align}
(\hat\delta_1Gy)(s,t)
= & Gy(t)-Gy(s)-\mathfrak a(s,t)G(y(s)) \notag \\
= & (G\hat\delta_1y)(s,t)+[G,\mathfrak a(s,t)]y(s) \notag \\
= & GS(t-s)\check{y}(s)(x(t)-x(s))+GR^Y(s,t)+[G,\mathfrak a(s,t)]y(s) \notag \\
% = & S(t-s)G\check{y}(s)(x(t)-x(s))+[G,S(t-s)]\check{y}(s)(x(t)-x(s)) \notag \\
% & +[G,\mathfrak a(s,t)]y(s)+GR^Y(s,t) \notag \\
= & S(t-s)G\check{y}(s)(x(t)-x(s))+[G,\mathfrak a(s,t)]\check{y}(s)(x(t)-x(s)) \notag \\
& +[G,\mathfrak a(s,t)]y(s)+GR^Y(s,t)\notag\\
=&
S(t-s)G\check{y}(s)(x(t)-x(s))+R^{GY}(s,t)
\label{delta_1_G}
\end{align}
for every $(s,t)\in[a,b]^2_<$, where we have used the fact that $[A,B]=[A-I,B]=[A,B-I]$ for every $A,B\in \mathscr{L}(E)$.

From the assumptions on $G$, $y$, $\check y$ and $R^Y$, for every $(s,t)\in[a,b]^2_<$ we can estimate
\begin{align}
\notag 
&|[G,\mathfrak a(s,t)]\check{y}(s)(x(t)-x(s))|_E\le  C_{\alpha,0}\mathfrak{g}_{0,\alpha}[x]_{C^{\eta}([a,b])}\|\check{y}\|_{B([a,b];E_{\alpha})}|t-s|^{\eta+\alpha}, \\[1mm]
& |[G,\mathfrak a(s,t)]y(s)|_{E}\leq  C_{\eta+\alpha,0}\mathfrak{g}_{0,\eta+\alpha}\|y\|_{B([a,b];E_{\eta+\alpha})}|t-s|^{\eta+\alpha}, \notag\\[1mm] 
& |GR^Y(s,t)|_E
\leq  K_{\alpha,0}\mathfrak{g}_0\|R^Y\|_{\mathscr C^{\eta+\alpha}([a,b]^2_<;E_{\alpha})}|t-s|^{\eta+\alpha}.
\label{stima_comm_y}
\end{align}

From \eqref{stima_comm_y} it follows that
$R^{GY}\in\mathscr{C}^{\rho+\eta}([a,b]^2;E)$ and satisfies estimate \eqref{stima_S_der_Gy}. Hence, \eqref{delta_1_G} allows us to conclude that
$Gy$ admits SG-derivative, the SG-derivative is $G\check{y}$ and the remainder is $R^{GY}$.
\end{proof}

\begin{remark}
{\rm The results in Lemma \ref{lem:ex_S_der_Gf} are not optimal when $G$
commutes with the semigroup $(S(t))_{t\ge 0}$. Indeed, in this case
$R^{GY}$ reduces to $GR^Y$. Hence, $Gy$ admits SG-derivative (if $y$ does), without any extra assumption on $y$, $\check{y}$ and $R^Y$.}   
\end{remark}

\section{Rough differential equations in infinite dimension}
\label{sect-3}
In this section we deal with the Cauchy problem
\begin{align}
\label{cauchy_prob_rough}
\left\{
\begin{array}{ll}
dy(t)=Ay(t)dt+Gy(t)dx(t),    &  t\in(0,T], \vspace{1mm} \\
y(0)=\psi,  
\end{array}
\right.
\end{align}
where $T>0$, $G\in\mathscr{L}(E)$ and $\psi\in E$. We look for a mild solution to \eqref{cauchy_prob_rough}, that is, roughly speaking, a function $y:[0,T]\to E$ such that $\mathscr I_{SGy}(0,\cdot)$ is well-defined in $[0,T]$ and $y$ satisfies the equation
\begin{align*}
y(t)=S(t)\psi+\mathscr I_{SGy}(0,t), \qquad\;\, t\in[0,T].
\end{align*}
The precise definition of mild solution will be given later. 

Our aim consists in proving existence and uniqueness of a mild solution $y$ to problem \eqref{cauchy_prob_rough} by means of a fixed-point argument. Moreover, we will show that such a solution takes value in a smaller space, that is, $y(t)\in D(A)$ for every $t\in(0,T]$.

Throughout this section, besides Hypotheses \ref{hyp-main}, we assume the following conditions on $x$, $\mathbb{X}$ and $G$.

\begin{hypotheses}
\label{hyp-3.1}
\begin{enumerate}[\rm (i)]
\item 
$x\in C^{\eta}([0,T])$ for some $\eta\in \left (\frac{1}{3},\frac{1}{2}\right ]$ and there exists $\alpha\in (1-2\eta,\eta)$ such that
\begin{eqnarray*}
\limsup_{t\to s^+}\frac{|x(t)-x(s)|}{|t-s|^{\eta+\alpha}}=\infty,\qquad\;\,s\in [0,T);
\end{eqnarray*}
\item 
$\mathbb{X}\in \mathscr C^{2\eta}([0,T]^2_{<})$ satisfies condition \eqref{form-diff-X};
\item 
$G\in\mathscr{L}(E)\cap\mathscr{L}(E_{\alpha})\cap\mathscr{L}(E_{\eta+\alpha})$.
\end{enumerate}
\end{hypotheses}

Now, we define the space where we look for a solution to \eqref{cauchy_prob_rough}.

\begin{definition}
For every $a,b\in [0,T]$, with $a<b$, we introduce the space
\begin{align*}
{\mathscr Y}_{\alpha,\eta}(a,b)
=\{& y\in C((a,b];E_{\eta+\alpha})\cap B([a,b];E_{\eta+\alpha})\textrm{ which admit SG-derivative } \\
& \textrm{}\check{y}\in C((a,b];E_\alpha) \cap  B([a,b];E_\alpha)\textrm{ with respect to $x$ with remainder }\\
&  R^Y\in \mathscr C^{\eta+\alpha}([a,b]^2_<;E_\alpha) \textrm{and } \hat\delta_1\check{y}\in \mathscr C^\alpha([a,b]^2_<;E_\alpha)\}.
\end{align*}
${\mathscr Y}_{\alpha,\eta}(a,b)$ is a Banach space when endowed with the norm
\begin{align*}
\|y\|_{\mathscr Y_{\alpha,\eta}(a,b)}
= & \|y\|_{B([a,b];E_{\eta+\alpha})}+ \|\check{y}\|_{B([a,b];E_\alpha)}+\|R^Y\|_{\mathscr C^{\eta+\alpha}([a,b]^2_<;E_\alpha)} \\
& +\|\hat\delta_{1}\check{y}\|_{\mathscr C^\alpha([a,b]^2_<;E_\alpha)}
\end{align*}
for every 
$y\in {\mathscr Y}_{\alpha,\eta}(a,b)$. 
\end{definition}

As the following remark shows, the elements of $\mathscr{Y}_{\alpha,\eta}(a,b)$ enjoy some additional regularity.

\begin{remark}
\label{rmk:reg_hat_delta_1_y}
{\rm If $y\in\mathscr Y_{\alpha,\eta}(a,b)$, then $\hat\delta_1y\in \mathscr C^{\eta}([a,b]^2_<;E_\alpha)$. Indeed, for every $(s,t)\in[a,b]^2_<$, we get
\begin{align*}
|(\hat\delta_1 y)(s,t)|_{E_\alpha}
\leq  & L_{\alpha,\alpha}\|\check y\|_{B([a,b];E_{\alpha})}[x]_{C^{\eta}([a,b])}|t-s|^{\eta}+\|R^Y\|_{\mathscr C^{\eta+\alpha}([a,b]^2_<;E_\alpha)}|t-s|^{\eta+\alpha},
\end{align*}
which gives 
$\|\hat\delta_1y\|_{\mathscr C^{\eta}([a,b]^2_<;E_\alpha)}\leq (L_{\alpha,\alpha}[x]_{C^{\eta}([a,b])}+(b-a)^{\alpha})\|y\|_{\mathscr Y_{\alpha,\eta}(a,b)}$.

As a byproduct, $y\in C^{\eta}([a,b];E_{\alpha})$. Indeed,
\begin{align*}
|y(t)-y(s)|_{E_{\alpha}}\le &|(\hat\delta_1y)(s,t)|_{E_{\alpha}}+|\mathfrak{a}(s,t)y(s)|_{E_{\alpha}}\\
\le &(\|\hat\delta_1y\|_{{\mathscr C}^{\eta}([a,b];E_{\alpha})}
+C_{\eta+\alpha,\alpha,b-a}\|y\|_{B([a,b];E_{\eta+\alpha})})|t-s|^{\eta}
\end{align*}
for every $(s,t)\in [a,b]^2_{<}$.

Using a similar argument, based on the fact that 
$\hat\delta_1\check y\in\mathscr{C}^{\alpha}([a,b]^2_{<};E_{\alpha})$ and $\check y\in B([a,b];E_{\alpha})$, it can be proved that $\check y$ belongs to $C^{\alpha}([a,b];E)$ and
\begin{eqnarray*}
|y(t)-y(s)|_E\le \big (K_{\alpha,0}\|\hat\delta_1\check y\|_{\mathscr{C}^{\alpha}([a,b]^2_{<};E_{\alpha})}
+C_{\alpha,0,b-a}\|\check y\|_{B([a,b];E_{\alpha})}\big )|t-s|^{\alpha}
\end{eqnarray*}
for every $(s,t)\in [a,b]^2_{<}$.
}
\end{remark}

On $\mathscr{Y}_{\alpha,\eta}(0,T)$ we introduce the operator $\Gamma$, defined as  
\begin{align}
\label{def_gamma_op}    
\Gamma(y)=S(\cdot)\psi+\mathscr I_{SGy}(0,\cdot), \qquad\;\, y\in {\mathscr Y}_{\alpha,\eta}(0,T).
\end{align}

Now we are able to give the precise definition of mild solution to \eqref{cauchy_prob_rough}.
\begin{definition}
We say that $y\in {\mathscr Y}_{\alpha,\eta}(0,T)$ is a mild solution to \eqref{cauchy_prob_rough} if  $y=S(\cdot)\psi+\mathscr I_{SGy}(0,\cdot)$.
\end{definition}

\begin{remark}
{\rm If $y$ is a mild solution to \eqref{cauchy_prob_rough}, then from \eqref{def_int_conv_rough}, it follows that it solves the equation
\begin{align}
y(t)=S(t)\psi+S(t)(G\psi)(x(t)-x(0))+S(t)(G^2\psi)\mathbb X(0,t)-M_{\delta_{S,2}g_{G^2y}}(0,t)
\label{mild_sol_form_espl}
\end{align}
for every $t\in[0,T]$. The dependence on $y$ in the right-hand side of \eqref{mild_sol_form_espl} is implicit and lies in the last term.}
\end{remark}

\begin{proposition}
\label{prop:Gamma_mappa_Y_in_Y}
For every $\psi\in E_{\eta+\alpha}$, the operator $\Gamma$ maps ${\mathscr Y}_{\alpha,\eta}(0,T)$ into itself for every $T>0$. Moreover, there exists a positive constant $\mathfrak{R}$,  which depends only on $T$, $\eta$, $\alpha$, $[x]_{C^{\eta}([0,T])}$, $\|\mathbb{X}\|_{\mathscr C^{2\eta}([0,T]^2_{<})}$, $G$ and the constants appearing in Hypotheses $\ref{hyp-main}$, such that, if $y$ is a fixed point of the operator $\Gamma$, then
\begin{align}
\label{stima_a_priori_Y}    
\|y\|_{{\mathscr Y}_{\alpha,\eta}(0,T)}\leq \mathfrak R |\psi|_{E_{\eta+\alpha}}.
\end{align}
\end{proposition}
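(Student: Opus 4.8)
The plan is to show that $\Gamma$ maps $\mathscr{Y}_{\alpha,\eta}(0,T)$ into itself by verifying, for any $y \in \mathscr{Y}_{\alpha,\eta}(0,T)$, that $\Gamma(y)$ possesses all four structural ingredients in the definition of the space, with finite norms. The key preparatory step is Lemma \ref{lem:ex_S_der_Gf}: given $y \in \mathscr{Y}_{\alpha,\eta}(0,T)$, the function $Gy$ admits SG-derivative $G\check y$ with a remainder $R^{GY} \in \mathscr{C}^{\eta+\alpha}([0,T]^2_<;E)$, and the product structure of $G$ (bounded on $E$, $E_\alpha$, $E_{\eta+\alpha}$ by Hypotheses \ref{hyp-3.1}(iii)) guarantees the integrand $Gy$ satisfies the hypotheses of Definition \ref{def:def_rough_conv_int} and Lemma \ref{lem:reg_int_conv_rough}. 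So $\mathscr{I}_{SGy}(0,\cdot)$ is well-defined.

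First I would identify the SG-derivative and remainder of $\Gamma(y)$ itself. Writing $\Gamma(y)(t) = S(t)\psi + \mathscr{I}_{SGy}(0,t)$, I would use the splitting property \eqref{spez_int} from Lemma \ref{lemma:lin_int}(ii) to compute $(\hat\delta_1 \Gamma(y))(s,t)$ and read off, from the expansion \eqref{form-formale}/\eqref{def_int_conv_rough}, that the candidate SG-derivative of $\Gamma(y)$ is $Gy$ (the leading term in $g_{G\check y}$ carries the factor $(x(t)-x(s))$ with coefficient $Gy(s)$), with an associated remainder built from the $M_{\delta_{S,2}g_{G\check y}}$ term plus the $\mathbb{X}$-term. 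This is the conceptually delicate bookkeeping step: one must check that the remainder so produced lies in $\mathscr{C}^{\eta+\alpha}([0,T]^2_<;E_\alpha)$ and that $\hat\delta_1(Gy) \in \mathscr{C}^\alpha([0,T]^2_<;E_\alpha)$, the latter following from Remark \ref{rmk:reg_hat_delta_1_y} applied to $y$ together with the boundedness of $G$ on $E_\alpha$.

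The bulk of the work is then quantitative: bounding each of the four norm-components of $\Gamma(y)$. The regularity estimate \eqref{stima_int_completa} of Lemma \ref{lem:reg_int_conv_rough}, applied with the integrand $Gy$ (so $f = Gy$, $\check f = G\check y$, remainder $R^{GY}$), gives control of $\mathscr{I}_{SGy}$ in $\mathscr{C}^\theta([0,T]^2_<;E_\xi)$ for the relevant $\xi \in [\lambda, \lambda+1)$; I would specialize $\xi$ to extract the $B([0,T];E_{\eta+\alpha})$-bound on $\Gamma(y)$ and the $\mathscr C^{\eta+\alpha}$-bound on its remainder, and estimate $\|G\check y\|_{B([0,T];E_\alpha)}$ and $\|\hat\delta_1(G\check y)\|_{\mathscr C^\alpha([0,T]^2_<;E_\alpha)}$ directly through $\mathfrak g_\alpha$ and the norms of $y$. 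Feeding in estimate \eqref{stima_S_der_Gy} for $\|R^{GY}\|$ in terms of $\|y\|_{\mathscr Y_{\alpha,\eta}}$, and the contribution $S(\cdot)\psi$ via the smoothing bounds \eqref{stime_smgr}, assembles a bound of the form $\|\Gamma(y)\|_{\mathscr Y_{\alpha,\eta}(0,T)} \le C(|\psi|_{E_{\eta+\alpha}} + \|y\|_{\mathscr Y_{\alpha,\eta}(0,T)})$ with constants depending only on the listed data. This shows $\Gamma(y) \in \mathscr{Y}_{\alpha,\eta}(0,T)$.

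For the a priori estimate \eqref{stima_a_priori_Y}, I would exploit that the constants multiplying $\|y\|_{\mathscr Y_{\alpha,\eta}}$ in the $\mathscr{I}_{SGy}$-contributions carry positive powers of the interval length, by the $(b-a)$-factors visible in \eqref{stima_int_completa} and the ``does not blow up'' clauses. The anticipated main obstacle is that these length factors need not be small on the whole of $[0,T]$: one cannot directly absorb the $\|y\|$-term on a large interval. The standard remedy, which I would carry out, is a short-interval argument — choose a subdivision $0 = \tau_0 < \tau_1 < \cdots < \tau_N = T$ with each step so small that the self-contribution constant is at most $1/2$, obtain the bound on each $[\tau_{i-1},\tau_i]$ with a fixed factor absorbed, and then patch the local $\mathscr Y_{\alpha,\eta}$-norms into a global one using the splitting \eqref{spez_int} and the semigroup smoothing to propagate the initial datum's size. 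Tracking how the $E_{\eta+\alpha}$-norm of $y(\tau_{i-1})$ controls the next block, one obtains \eqref{stima_a_priori_Y} with $\mathfrak R$ depending on $N$ and hence on $T$ and the listed quantities.
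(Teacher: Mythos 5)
Your proposal follows essentially the same route as the paper's own proof: well-definedness of $\mathscr I_{SGy}$ via Lemma \ref{lem:ex_S_der_Gf} and Lemma \ref{lem:reg_int_conv_rough}, identification of $Gy$ as the SG-derivative of $\Gamma(y)$ with remainder built from the $\mathbb X$-term and $M_{\delta_{S,2}g_{G\check y}}$, and, for the a priori bound, a subdivision of $[0,T]$ into intervals short enough that the $(b-a)^{\eta-\alpha}$-weighted self-term is absorbed, with the local bounds propagated block-by-block through $|y(\tau_{i-1})|_{E_{\eta+\alpha}}$. This matches the paper's Steps 1--5, including the dependence of $\mathfrak R$ on the number $N$ of subintervals.
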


\begin{proof}
For the reader's convenience, we split the proof into different steps.
In the first three steps, we prove that $\Gamma$ maps $\mathscr{Y}_{\alpha,\eta}(0,T)$ into itself. In the last one, we complete the proof, checking estimate \eqref{stima_a_priori_Y}.

Throughout the proof, $T>0$ is fixed.
For further use, in the estimate that we prove, we stress the dependence on the H\"older norms of $x$ and $\mathbb{X}$. Hence, 
by $\mathfrak{C}_j$ we denote positive constants, 
which may depend on $\alpha$, $\eta$, $T$, $G$, the constants in \eqref{stime_smgr} as well as the embedding constants in Hypothesis \ref{hyp-main}(ii), but are independent of $x$, $\mathbb{X}$ and $y$.

\textit{Step 1.} Here, we prove that 
the rough convolution integral $\mathscr I_{SGy}$ is well-defined for every $y\in\mathscr{Y}_{\alpha,\eta}(0,T)$. 

We fix $y\in {\mathscr Y}_{\alpha,\eta}(0,T)$ and use Lemma \ref{lem:ex_S_der_Gf} to infer that $Gy$ admits SG-derivative $G\check{y}$, with respect to $x$ with remainder $R^{GY}\in \mathscr C^{\eta+\alpha}([0,T]^2_<;E)$. Further, observing that
$(\hat\delta_1 G\check{y})(s,t)=G(\hat\delta_1\check y)(t,s)+
[G,\mathfrak a(s,t)]\check{y}(s)$ for every $(s,t)\in [0,T]^2_{<}$ and recalling that $\check y\in B([0,T];E_\alpha)$ and $\hat\delta_1\check y\in\mathscr C^\alpha([0,T]^2_{<};E_\alpha)$, 
it follows easily that $\hat\delta_1 G\check{y}\in \mathscr{C}^{\alpha}([0,T]^2_{<};E)$ and
\begin{align}
\|\hat\delta_1 G\check{y}\|_{\mathscr{C}^{\alpha}([0,T]^2_{<};E)}
\leq & K_{\alpha,0}\mathfrak{g}_0\|\hat\delta_1 \check{y}\|_{\mathscr C^\alpha([0,T]^2_<;E_\alpha)} +C_{\alpha,0}\mathfrak{g}_{0,\alpha}\|\check{y}\|_{B([0,T];E_{\alpha})}.
\label{stima_delta_1Gchecky}
\end{align}

Hence, from Lemma \ref{lem:reg_int_conv_rough}, with $\lambda=\omega=0$ and $\omega_1=\alpha$,
estimates \eqref{stima_S_der_Gy} and \eqref{stima_delta_1Gchecky},
it follows that the rough convolution integral $\mathscr I_{SGy}$ is well-defined, it belongs to  
${\mathscr C}^{\theta}([0,T]^2_<;E_\xi)$,  for every $\xi\in[0,1)$, where 
$\theta=\eta-(\xi-\eta-\alpha)^+$,
and
\begin{align}
\|\mathscr I_{SGy}\|_{\mathscr C^{\theta}([0,T]^2_<;E_\xi)}\le &\mathfrak{C}_0\Big (\mathfrak{g}_{\eta+\alpha}\|y\|_{B([0,T];E_{\eta+\alpha})}+\mathfrak{g}_{\alpha}
\|\mathbb X\|_{\mathscr C^{2\eta}([0,T]_<^2)}\|\check y\|_{B([0,T];E_{\alpha})}\notag\\
&\quad\;\,+
C_{\alpha,0}(\mathfrak{g}_0+\mathfrak{g}_{\alpha})([x]_{C^{\eta}([0,T])}^2+\|\mathbb{X}\|_{\mathscr{C}^{2\eta}([0,T]^2_{<})})\|\check{y}\|_{B([0,T];E_{\alpha})}\notag \\
&\quad\;\, + C_{\eta+\alpha,0}(\mathfrak{g}_0+\mathfrak{g}_{\eta+\alpha})[x]_{C^{\eta}([0,T])}\|y\|_{B([0,T];E_{\eta+\alpha})}
\notag \\
&\quad\;\, + K_{\alpha,0}\mathfrak{g}_0[x]_{C^{\eta}([0,T])}\|R^Y\|_{\mathscr C^{\eta+\alpha}([0,T]^2_<;E_{\alpha})}\notag\\
&\quad\;\,+K_{\alpha,0}\mathfrak{g}_0\|\mathbb X\|_{\mathscr C^{2\eta}([0,T]^2_<)}\|\hat\delta_1\check y\|_{\mathscr{C}^{\alpha}([0,T]^2_{<};E_{\alpha})}\Big )\notag\\
\le & \mathfrak{C}_1(1+[x]^2_{C^{\eta}([0,T])}
+\|\mathbb{X}\|_{\mathscr C^{2\eta}([0,T]^2_{<})})\|y\|_{{\mathscr Y}_{\alpha,\eta}(0,T)}
\label{stima_int_completa-1}
\end{align}
for some positive constants $\mathfrak{C}_0$ and $\mathfrak{C}_1$, which depend also on $\xi$.

\textit{Step 2.} Here, we prove that $\Gamma(y)\in C((0,T];E_{\eta+\alpha})\cap B([0,T];E_{\eta+\alpha})$. 

We fix $y\in {\mathscr Y}_{\alpha,\eta}(0,T)$ and compute the $E_{\eta+\alpha}$-norm of $S(t)\psi+\mathscr I_{SGy}(0,t)$ for every $t\in[0,T]$. Taking $\xi=\eta+\alpha$, we obtain that
\begin{align*}
|S(t)\psi+\mathscr I_{SGy}(0,t)|_{E_{\eta+\alpha}}
\leq & L_{\eta+\alpha,\eta+\alpha}|\psi|_{E_{\eta+\alpha}}+\|\mathscr I_{SGy}\|_{\mathscr C^{\eta}([0,T]^2_<;E_{\eta+\alpha})}T^{\eta}
\end{align*}
for every $t\in[0,T]$. 
It follows that $\Gamma(y)\in B([0,T];E_{\eta+\alpha})$.

Let us study the continuity of $\Gamma(y)$ in $(0,T]$ with respect to the $E_{\eta+\alpha}$-norm. From Remark \ref{strong-cont-smgr}, it follows that the function $S(\cdot)\psi$ belongs to $C((0,T];E_{\eta+\alpha})$. Moreover, from Lemma \ref{lemma:lin_int}, we know that
\begin{align*}
\mathscr I_{SGy}(0,t)-\mathscr I_{SGy}(0,s)
= \mathscr I_{SGy}(s,t)+\mathfrak a(s,t)\mathscr I_{SGy}(0,s)
\end{align*}
for every $(s,t)\in[0,T]^2_<$ (let us notice that if $s=0$ the computations simplify, since $\mathscr I_{SGy}(0,0)=0$). If we fix $\varepsilon\in(0,1-\eta-\alpha]$, then from \eqref{stime_smgr}(b) and taking  $\xi=\eta+\alpha$ and  $\xi=\eta+\alpha+\varepsilon$ in \eqref{stima_int_completa-1}, we get
\begin{align*}
&|\mathscr I_{SGy}(0,t)-\mathscr I_{SGy}(0,s)|_{E_{\eta+\alpha}}\\
\leq & \|\mathscr I_{SGy}\|_{\mathscr C^{\eta}([0,T]^2_<;E_{\eta+\alpha})}|t-s|^{\eta}\\
&+C_{\eta+\alpha+\varepsilon,\eta+\alpha}\|\mathscr I_{SGy}\|_{\mathscr C^{\eta-\varepsilon}([0,T]^2_<;E_{\eta+\alpha+\varepsilon})}T^{\eta-\varepsilon}|t-s|^{\varepsilon}
\end{align*}
for every $(s,t)\in[0,T]^2_<$. This proves the continuity of $\mathscr I_{SGy}(0,\cdot)$ in $[0,T]$ with values in $E_{\eta+\alpha}$. Hence, $\Gamma(y)\in C((0,T];E_{\eta+\alpha})\cap B([0,T];E_{\eta+\alpha})$.

% As far as the continuity of $\Gamma_1(y)$ is concerned, it remains to prove that $t\mapsto \Gamma_1(y)(t)$ is continuous at $0$ with values in $E$. The above computations reveal that we only need to show that the mapping $t\mapsto S(t)\psi$ is continuous at $0$ with values in $E$. To this aim, for every $t\in(0,T]$ we get
% \begin{align*}
% |\Gamma_1(y)(t)-\Gamma_1(y)(0)|_E
% =|(S(t)-I)\psi|_E=C_{\eta+\alpha,0}|\psi|_{E_{\eta+\alpha}}t^{\eta+\alpha},
% \end{align*}
% which gives the desired result.

\textit{Step 3.} Here, we complete the proof, showing that $\Gamma(y)$ admits SG-derivative and analyze its smoothness. 

From \eqref{spez_int} and the fact that $\hat\delta_1S(\cdot)\psi=0$, we infer that
\begin{align*}
(\hat\delta_1\Gamma(y))(s,t)
= & \mathscr I_{SGy}(s,t)
\\
= & S(t-s)Gy(s)(x(t)-x(s)) +S(t-s)G\check{y}(s)\mathbb X(s,t)
-M_{\delta_{S,2}g_{G\check{y}}}(s,t)
\end{align*}
for every $(s,t)\in[0,T]^2_<$. We claim that $\Gamma(y)$ admits SG-derivative $Gy$ with remainder 
\begin{align}
R^{\Gamma(y)}(s,t)=S(t-s)G\check{y}(s)\mathbb{X}(s,t)-M_{\delta_{S,2}g_{G\check{y}}}(s,t), \qquad\;\, (s,t)\in[0,T]^2_<.
\label{rem-gamma}
\end{align}
To prove the claim, we show that $R^{\Gamma(y)}\in\mathscr C^{2\eta}([0,T]^2_<;E_\alpha)$. From the assumptions on $G$ and $\check{y}$, and \eqref{stima_M}, with $F=GY$, $\varepsilon=\alpha$ and $\lambda=\omega=0$, we infer that
\begin{align}
|R^{\Gamma(y)}(s,t)|_{E_\alpha}
\leq & L_{\alpha,\alpha}\mathfrak{g}_{\alpha}\|\check{y}\|_{B([0,T];E_\alpha)}\|\mathbb X\|_{\mathscr C^{2\eta}([0,T]^2_<)}|t-s|^{2\eta}\notag \\
& +c_2\big([x]_{C^\eta([0,T])}\|R^{GY}\|_{\mathscr C^{\eta+\alpha}([0,T]^2_<;E)} \notag \\
&\qquad\quad+\|\mathbb X\|_{\mathscr C^{2\eta}([0,T]^2_<)}\|\hat\delta_1G\check{y}\|_{\mathscr C^\alpha([0,T]^2_<;E)}\big)|t-s|^{2\eta}
\label{star-star}
\end{align}
for every $(s,t)\in[0,T]^2_<$. The claim is proved. Moreover, from \eqref{star-star} and
\eqref{stima_S_der_Gy} it follows that
\begin{align}
\|R^{\Gamma(y)}\|_{\mathscr{C}^{2\eta}([0,T]^2_<;E_\alpha)}
\le & \mathfrak{C}_2(1+[x]_{C^{\eta}([0,T])}^2+\|\mathbb{X}\|_{\mathscr C^{2\eta}([0,T]^2_{<})})\|y\|_{\mathscr{Y}_{\alpha,\eta}(0,T)}.
\label{lilly}
\end{align}

The fact that $Gy\in B([0,T];E_\alpha)\cap C((0,T];E_\alpha)$ follows from the boundedness of $G$ in $E_{\alpha}$ and the regularity of $y$. To show that $\hat\delta_1 Gy$ belongs to $ \mathscr C^\alpha([0,T]^2_<;E_\alpha)$ we observe that
\begin{align*}
|(\hat\delta_1Gy)(s,t)|_{E_\alpha}
= & |Gy(t)-Gy(s)-S(t-s)Gy(s)|_{E_\alpha} \notag \\
\leq & |G(\hat\delta_1 y)(s,t)|_{E_\alpha}+|[G,S(t-s)]y(s)|_{E_\alpha} \notag \\
\leq & \mathfrak{g}_{\alpha}
|S(t-s)\check{y}(s)(x(t)-x(s))+R^Y(s,t)|_{E_\alpha}
+|[G,\mathfrak a(s,t)]y(s)|_{E_\alpha}\notag\\
\leq & \mathfrak{g}_{\alpha}L_{\alpha,\alpha}\|\check{y}\|_{B([0,T];E_\alpha)}[x]_{C^\eta([0,T])}|t-s|^\eta\notag \\
& +\mathfrak{g}_{\alpha}\|R^Y\|_{\mathscr C^{\eta+\alpha}([0,T]^2_<;E_\alpha)}|t-s|^{\eta+\alpha}\notag\\
&+\mathfrak{g}_{\alpha,\eta+\alpha}C_{\eta+\alpha,\alpha}\|y\|_{B([0,T];E_{\eta+\alpha})}|t-s|^{\eta}
%\label{stima_hatdelta_1_Gy}
\end{align*}
for every $(s,t)\in[0,T]^2_<$. Since $\eta>\alpha$, we conclude that $\hat\delta_1 Gy\in\mathscr C^\alpha([0,T]^2_<;E_\alpha)$ and
\begin{align}
\|\hat\delta_1Gy\|_{\mathscr{C}^{\alpha}([0,T]^2_{<};E_{\alpha})}\le
\mathfrak{C}_3(1+[x]_{C^{\eta}([0,T])})\|y\|_{\mathscr{Y}_{\alpha,\eta}(0,T)}.
\label{confine}
\end{align}

Summing up, we have proved that $\Gamma(y)\in {\mathscr Y}_{\alpha,\eta}(0,T)$. 

{\em Step 4.}
We fix $[a,b]\subset [0,T]$ and prove that there exists a positive constant $\mathfrak{C}_4$, such that
\begin{align}
\|\Gamma(y)\|_{\mathscr Y_{\alpha,\eta}(a,b)}
\leq & (|(\Gamma(y))(a)|_{E_{\eta+\alpha}}+K_{\eta+\alpha,\alpha}\mathfrak{g}_{\eta+\alpha}|y(a)|_{E_{\eta+\alpha}})L_{\eta+\alpha,\eta+\alpha}\notag\\
& +\mathfrak{C}_4(1+[x]_{C^{\eta}([0,T])}+\|\mathbb{X}\|_{\mathscr C^{2\eta}([0,T]^2_{<})})\|y\|_{{\mathscr Y}_{\alpha,\eta}(a,b)}(b-a)^{\eta-\alpha},
\label{stima_parz_mild_sol}
\end{align}
where, here and below, $L_{\gamma,\delta}=L_{\gamma,\delta,T}$ and
$C_{\gamma,\delta}=C_{\gamma,\delta,T}$.
From this estimate, it follows that, if $y$ is a fixed point of the operator $\Gamma$ and $(b-a)^{\eta-\alpha}\leq \frac12(\mathfrak C_4(1+[x]_{C^{\eta}([0,T])}+\|\mathbb{X}\|_{\mathscr C^{2\eta}([0,T]^2_{<})}))^
{-1}$, then
\begin{align}
\|y\|_{\mathscr Y_{\alpha,\eta}(a,b)}
\leq 2(1+K_{\eta+\alpha,\alpha}\mathfrak{g}_{\eta+\alpha})L_{\eta+\alpha,\eta+\alpha}|y(a)|_{E_{\eta+\alpha}}. 
\label{stima_a_priori_fin_1}
\end{align}

Let us prove \eqref{stima_parz_mild_sol}. In view of the previous steps of the proof, we just need to estimate the $B([a,b];E_{\eta+\alpha})$-norm of $\Gamma(y)$ and the $B([a,b];E_{\alpha})$-norm of $Gy$ (its SG-derivative).
Note that
\eqref{spez_int} implies that 
$\Gamma(y)=S(\cdot-a)\Gamma(y)(a)+\mathscr I_{SGy}(a,\cdot)$ in $[a,b]$.  
Hence, using \eqref{stime_smgr}(a) and \eqref{stima_int_completa-1} we can estimate
\begin{align}
|\Gamma(y)(t)|_{E_{\eta+\alpha}}
\leq &  L_{\eta+\alpha,\eta+\alpha}|(\Gamma(y))(a)|_{E_{\eta+\alpha}}+|\mathscr I_{SGy}(a,t)|_{E_{\eta+\alpha}} \notag \\
\leq & L_{\eta+\alpha,\eta+\alpha}|(\Gamma(y))(a)|_{E_{\eta+\alpha}} \notag \\
& +\mathfrak C_1(1+[x]_{C^{\eta}([0,T])}^2+
\|\mathbb{X}\|_{\mathscr C^{2\eta}([0,T]^2_{<})})\|y\|_{{\mathscr Y}_{\alpha,\eta}(a,b)}(b-a)^{\eta}
\label{stima_a_priori_y}
\end{align}
for every $t\in[a,b]$. Further, since
\begin{align*}
Gy(t)=&G(\hat\delta_1y)(a,t)
+GS(t-a)y(a)\\
=&
(GS(t-a)\check y(a))(x(t)-x(a))
+GR^Y(a,t)+GS(t-a)y(a)
\end{align*}
for every $t\in [a,b]$, it follows that
\begin{align}
|Gy(t)|_{E_\alpha}
\leq &  
|GS(t-a)\check y(a)|_{E_{\alpha}}|x(t)-x(a)|
+|GR^Y(a,t)|_{E_{\alpha}}+|GS(t-a)y(a)|_{E_{\alpha}}\notag\\
\le &
\mathfrak{g}_{\alpha}L_{\alpha,\alpha}\|\check y\|_{B([a,b];E_{\alpha})}[x]_{C^{\eta}([0,T])}(b-a)^{\eta}\notag\\
&+\mathfrak{g}_{\alpha}\|R^Y\|_{\mathscr{C}^{\eta+\alpha}([a,b]^2_{<};E_{\alpha})}(b-a)^{\eta+\alpha}
+\mathfrak{g}_{\eta+\alpha}K_{\eta+\alpha,\alpha}L_{\eta+\alpha,\eta+\alpha}|y(a)|_{E_{\eta+\alpha}}\notag\\
\le &
\mathfrak{C}_5(1+[x]_{C^{\eta}([0,T])}+
\|\mathbb{X}\|_{\mathscr C^{2\eta}([0,T]^2_{<})})\|y\|_{{\mathscr Y}_{\alpha,\eta}(a,b)}(b-a)^{\eta} \notag \\
& + K_{\eta+\alpha,\alpha}\|G\|_{\mathscr{L}(E_{\eta+\alpha})}L_{\eta+\alpha,\eta+\alpha}|y(a)|_{E_{\eta+\alpha}}
\label{stima_a_priori_S_der_sup}
\end{align}
for every $t\in [a,b]$.

From \eqref{lilly}, \eqref{confine}, \eqref{stima_a_priori_y} and \eqref{stima_a_priori_S_der_sup}, estimate \eqref{stima_parz_mild_sol} follows, with $\mathfrak C_4=\mathfrak (\mathfrak{C}_1+2\mathfrak{C}_5)T^{\eta-\alpha}+\mathfrak C_2+2\mathfrak C_3$. 

{\it Step 5}. Now, we extend estimate \eqref{stima_a_priori_fin_1} to the whole interval $[0,T]$. For this purpose, we set
\begin{align}
\overline T
:= & 
(2\mathfrak C_4(1+[x]_{C^{\eta}([0,T])}^2+\|\mathbb{X}\|_{\mathscr C^{2\eta}([0,T]^2_{<})}))^{-\frac{1}{\eta-\alpha}}
\label{choice_ovT}
\end{align}
and consider a partition $\{t_0=0<t_1<\ldots<t_N=T\}$ of $[0,T]$ such that $t_n-t_{n-1}\leq \overline T$ for every $n=1,\ldots,N$. Further, we set 
\begin{align*}
\Phi(r)=2(1+K_{\eta+\alpha,\alpha}\mathfrak{g}_{\eta+\alpha}) )L_{\eta+\alpha,\eta+\alpha} r
,\qquad\;\,r\in [0,\infty).
\end{align*}
Since $\Phi$ is an increasing function, from \eqref{stima_a_priori_fin_1}, with $[a,b]$ replaced by $[t_{n-1},t_n]$, it follows that
\begin{align*}
\|y\|_{{\mathscr Y}_{\alpha,\eta}(t_{n-1},t_n)}\leq \Phi(|y(t_{n-1})|_{E_{\eta+\alpha}})\leq \Phi^n(|\psi|_{E_{\eta+\alpha}}),\qquad\;\,n=1,\ldots,N.
\end{align*}
This implies that
\begin{align}
\label{stima_a_priori_y_checky}
\|y\|_{B([0,T];E_{\eta+\alpha})}
+\|\check y\|_{B([0,T];E_\alpha)}\leq \Phi^N(|\psi|_{E_{\eta+\alpha}})
\end{align}
and, consequently,
from \eqref{stima_a_priori_fin_1} and \eqref{stima_a_priori_y_checky} we get
\begin{align}
\label{stima_a_priori_s_t}
\|y\|_{{\mathscr Y}_{\alpha,\eta}(s,t)}\leq 2(1+K_{\eta+\alpha,\alpha}\mathfrak{g}_{\eta+\alpha} )L_{\eta+\alpha,\eta+\alpha}|y(s)|_{E_{\eta+\alpha}}
\leq \Phi^{N+1}(|\psi|_{E_{\eta+\alpha}})  
\end{align}
for every $(s,t)\in[0,T]^2_<$ with $t-s\leq \overline T$.

Using \eqref{stima_a_priori_s_t}, we can complete the estimate
of the  $\mathscr{Y}_{\alpha,\eta}(0,T)$-norm of $y$. In view of \eqref{stima_a_priori_y_checky}, we need to consider the functions $R^{Y}$ and $\hat\delta_1\check{y}$.

Clearly, if $(s,t)\in [0,T]^2_<$ is such that $t-s\leq \overline T$, then from \eqref{stima_a_priori_s_t} it follows that
\begin{align*}
&\frac{|R^Y(s,t)|_{E_\alpha}}{|t-s|^{\eta+\alpha}}\leq \Phi^{N+1}(|\psi|_{E_{\eta+\alpha}}), \qquad\;\,   
\frac{|(\hat\delta_1\check y)(s,t)|_{E_\alpha}}{|t-s|^{\alpha}}
\leq \Phi^{N+1}(|\psi|_{E_{\eta+\alpha}}).
\end{align*}

To estimate the previous ratios when $t-s>\overline T$, we recall that $R^Y(s,t)=(\hat\delta_1y)(s,t)-S(t-s)\check y(s)(x(t)-x(s))$ and that $(\hat \delta_1y)(s,t)=y(t)-S(t-s)y(s)$ for every $(s,t)\in[0,T]^2_<$. Therefore, 
\begin{align*}
\frac{|R^Y(s,t)|_{E_\alpha}}{|t-s|^{\eta+\alpha}}
\leq &  \frac{|R^Y(s,t)|_{E_\alpha}}{\overline{T}^{\eta+\alpha}}\\
\le &
\frac{(1+L_{\alpha,\alpha})K_{\eta+\alpha,\alpha}}{\overline T^{\eta+\alpha}}\|y\|_{B([0,T];E_{\eta+\alpha})} +\frac{L_{\alpha,\alpha}}{\overline T^{\alpha}}[x]_{C^\eta([0,T])}\|\check y\|_{B([0,T];E_\alpha)}\\
\le &
\bigg (\frac{(1+L_{\alpha,\alpha})K_{\eta+\alpha,\alpha}}{\overline{T}^{\eta+\alpha}}+\frac{ L_{\alpha,\alpha}}{\overline T^{\eta}}[x]_{C^\eta([0,T])}\bigg )\Phi^N(|\psi|_{E_{\eta+\alpha}})\\
=&\!:\mathfrak{C}_7\Phi^N(|\psi|_{E_{\eta+\alpha}})
\end{align*}
for every $(s,t)\in[0,T]^2_<$, with $t-s>\overline T$. 

Similarly, since $(\hat\delta_1\check y)(s,t)=\check y(t)-S(t-s)\check y(s)$ for every $(s,t)\in[0,T]^2_<$, from 
\eqref{stima_a_priori_y_checky}, it follows that
\begin{align*}
\frac{|(\hat\delta_1\check y)(s,t)|_{E_\alpha}}{|t-s|^\alpha}
\leq \frac{(1+L_{\alpha,\alpha})\Phi^N(|\psi|_{\eta+\alpha})}{\overline T^\alpha}=:\mathfrak{C}_8\Phi^N(|\psi|_{E_{\eta+\alpha}})
\end{align*}
if $(t,s)\in [0,T]^2$ and $t-s>\overline T$.
Hence,
\begin{align}
\label{stima_a_priori_R^Y_fin}
\|R^Y\|_{\mathscr C^{\eta+\alpha}([0,T]^2_<;E_\alpha)}
\leq \max\{1,\mathfrak{C}_7\}\Phi^{N+1}(|\psi|_{E_{\eta+\alpha}})
\end{align}
and
\begin{align}
\label{stima_a_priori_checky_fin}
\|\hat\delta_1 \check y\|_{\mathscr C^\alpha([0,T]^2_<;E_\alpha)}
\leq \max\{1, \mathfrak{C}_8\}\Phi^{N+1}(|\psi|_{E_{\eta+\alpha}}),
\end{align}
where we took into account that $L_{\eta+\alpha,\eta+\alpha}\ge 1$.
From \eqref{stima_a_priori_y_checky}, \eqref{stima_a_priori_R^Y_fin} and \eqref{stima_a_priori_checky_fin} we get estimate \eqref{stima_a_priori_Y} with
$\mathfrak{R}=(1+\max\{1,\mathfrak{C}_7\}+\max\{1,\mathfrak{C_8}\})\Phi^{N+1}(1)$.
\end{proof}

\begin{remark}
\label{rem-utile}
{\rm The proof of Proposition \ref{prop:Gamma_mappa_Y_in_Y} shows that, if $y\in {\mathscr Y}_{\alpha,\eta}(0,T)$ and  $z=S(\cdot)\psi+\mathscr I_{SGy}(0,\cdot)$ in $[0,T]$, then $z$ admits SG-derivative and $\check z=Gy$.}
\end{remark}

Now we are able to state the main result of this section. 

\begin{theorem}
\label{thm:ex_un_mild_solution_rough}
For every $\psi\in E_{\eta+\alpha}$ the Cauchy problem \eqref{cauchy_prob_rough} admits a unique mild solution $y\in {\mathscr Y}_{\alpha,\eta}(0,T)$. Further, 
$y(t)$ belongs to $E_{1+\mu}$ 
for every $\mu\in[0,2\eta+\alpha-1)$ and $t\in(0,T]$, and there exists a positive constant $\mathfrak C$, which depends on $\mu$, $T$, $\alpha$, $\eta$, $G$, $[x]_{C^{\eta}([0,T])}$, $\|\mathbb{X} \|_{\mathscr C^{2\eta}([0,T]^2_{<})}$ and
the constants appearing in 
Hypotheses $\ref{hyp-main}$, such that
\begin{align}
\|y(t)\|_{E_{1+\mu}}\leq \mathfrak Ct^{\eta+\alpha-1-\mu}, \qquad\;\, t\in(0,T].
\label{estim-1}
\end{align}
\end{theorem}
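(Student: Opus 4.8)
The plan is to obtain existence and uniqueness by a contraction argument and then to deduce the spatial smoothing by a bootstrap built on the localization identity \eqref{spez_int}.

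\emph{Existence and uniqueness.} By Proposition \ref{prop:Gamma_mappa_Y_in_Y} the operator $\Gamma$ maps $\mathscr{Y}_{\alpha,\eta}(0,T)$ into itself, and by Lemma \ref{lemma:lin_int}(i) it is affine, its linear part being $w\mapsto\mathscr I_{SGw}(0,\cdot)$. I would first work on a short interval $[0,\tau]$ and on the closed affine subspace $\{y\in\mathscr Y_{\alpha,\eta}(0,\tau):y(0)=\psi\}$, which $\Gamma$ preserves since $\Gamma(y)(0)=\psi$. Two elements of this subspace differ by some $w$ with $w(0)=0$, and then $\Gamma(y_1)-\Gamma(y_2)=\mathscr I_{SGw}(0,\cdot)$ has vanishing value and SG-derivative at $0$ (Remark \ref{rem-utile}); hence the boundary contributions in \eqref{stima_parz_mild_sol} drop out and only the factor $(b-a)^{\eta-\alpha}$ survives. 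Choosing $\tau$ as in \eqref{choice_ovT} makes $\Gamma$ a strict contraction, and the Banach fixed point theorem yields a unique mild solution on $[0,\tau]$. Since the fixed point lies in $B([0,\tau];E_{\eta+\alpha})$, its value at $\tau$ belongs to $E_{\eta+\alpha}$, so the construction is iterated on $[\tau,2\tau],[2\tau,3\tau],\dots$; the pieces glue into a mild solution on $[0,T]$ thanks to the additivity \eqref{spez_int}, and local uniqueness propagates to global uniqueness. The a priori bound \eqref{stima_a_priori_Y} ensures that finitely many steps cover $[0,T]$.

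\emph{Reduction of the smoothing to the convolution term.} For the regularity statement I would fix $t\in(0,T]$ and apply \eqref{spez_int} with the intermediate point $t/2$, so that
\[
y(t)=S(t/2)\,y(t/2)+\mathscr I_{SGy}(t/2,t).
\]
The first summand is immediate: as $|y(t/2)|_{E_{\eta+\alpha}}\le\mathfrak R|\psi|_{E_{\eta+\alpha}}$ by \eqref{stima_a_priori_Y}, estimate \eqref{stime_smgr}(a) gives $|S(t/2)y(t/2)|_{E_{1+\mu}}\le C\,t^{\eta+\alpha-1-\mu}$, which is exactly the rate claimed in \eqref{estim-1}. Everything thus reduces to showing that $\mathscr I_{SGy}(t/2,t)\in E_{1+\mu}$ with at worst the same blow-up. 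By Lemma \ref{lem:reg_int_conv_rough} the convolution lands in $E_\xi$ only for $\xi<\lambda+1$, where $\lambda$ is the spatial index of the remainder $R^{GY}$; with the base regularity $R^{GY}\in\mathscr C^{\eta+\alpha}([a,b]^2_<;E)$ from Lemma \ref{lem:ex_S_der_Gf} one reaches only $\xi<1$, so a genuine bootstrap of the spatial regularity of the remainder is unavoidable.

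\emph{The bootstrap.} I would improve the regularity in two stages on $[t/2,t]$. In the first stage I keep $\lambda=0$: the hypotheses of Lemma \ref{lem:reg_int_conv_rough} hold with constants uniform on $[t/2,t]$, and the splitting above gives $y(t)\in E_\gamma$ for every $\gamma<1$, with $|y(t)|_{E_\gamma}\le C\,t^{\eta+\alpha-\gamma}$. In the second stage I exploit this gain to raise $\lambda$. The decisive computation is the estimate of $R^{GY}$, given by \eqref{forma_rem_GY}, in a space $E_\lambda$ with $\lambda$ close to $2\eta+\alpha-1$ and H\"older exponent $\eta+\alpha-\lambda$. The terms $(S(t-s)-I)Gy(s)$ and $GR^Y$ are harmless, because $Gy(s)\in E_{\eta+\alpha}$ and $R^Y\in\mathscr C^{\eta+\alpha}([a,b]^2_<;E_\alpha)$; the delicate one is $G(S(t-s)-I)y(s)$, since $G$ is bounded only on $E_0,E_\alpha,E_{\eta+\alpha}$. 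Writing $(S(t-s)-I)y(s)\in E_\alpha$ with the bound $C|t-s|^{\gamma-\alpha}|y(s)|_{E_\gamma}$ and then applying $G\in\mathscr L(E_\alpha)$, I gain the exponent $\gamma-\alpha$, which is $\ge\eta+\alpha-\lambda$ precisely when $\gamma\ge\eta+2\alpha-\lambda$. As $\gamma$ may be taken arbitrarily close to $1$ from the first stage and $1>1+\alpha-\eta$ (because $\alpha<\eta$), this is compatible with any $\lambda<2\eta+\alpha-1$. An analogous commutator analysis, together with $R^Y\in\mathscr C^{2\eta}([a,b]^2_<;E_\alpha)$ from Step~3 of the proof of Proposition \ref{prop:Gamma_mappa_Y_in_Y}, shows $\hat\delta_1(G^2y)\in\mathscr C^\alpha([a,b]^2_<;E_\omega)$ for $\omega$ up to $\alpha\ge2\eta+\alpha-1$. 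Feeding these into Lemma \ref{lem:reg_int_conv_rough} with $\xi=1+\mu$ and a suitable $\lambda\in(\mu,2\eta+\alpha-1)$ yields $\mathscr I_{SGy}(t/2,t)\in E_{1+\mu}$.

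\emph{Main obstacle.} The heart of the argument, and the point where the ``intricate balance between space and time regularity'' materializes, is exactly this estimate of $R^{GY}$: raising its spatial index $\lambda$ costs time regularity through \eqref{stime_smgr}(b), while the non-smoothing operator $G$ can be applied only in $E_0,E_\alpha,E_{\eta+\alpha}$, so the gain must be funneled through $E_\alpha$; it is for this reason that the intermediate stage $\gamma<1$ is indispensable and the estimate cannot be made uniform down to $s=0$. The remaining work is bookkeeping of the singularity: on $[t/2,t]$ the norm $\|y\|_{B([t/2,t];E_\gamma)}$ carries the factor $t^{\eta+\alpha-\gamma}$ produced by the first stage, and one must verify, tracking the explicit powers of $(b-a)=t/2$ in \eqref{stima_int_completa} against the $\mathscr C^\theta$-exponent, that their combination reproduces the rate $t^{\eta+\alpha-1-\mu}$ rather than a worse one. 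Since $\mathscr I_{SGy}(t/2,t)$ is the genuinely regularizing term, this bookkeeping closes, and \eqref{estim-1} follows.
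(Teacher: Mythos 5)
Your proposal is correct and follows essentially the same route as the paper: a short-time contraction (same choice of $\overline T$, same cancellation of the boundary terms in \eqref{stima_parz_mild_sol} for differences vanishing at $0$) iterated to cover $[0,T]$, followed by the two-stage bootstrap in which the first stage gives $y(t)\in E_\gamma$, $\gamma<1$, with rate $t^{\eta+\alpha-\gamma}$ (the paper's \eqref{stima_reg_y_1}), and the second stage raises the spatial index $\lambda$ of $R^{GY}$ precisely through the delicate term $G\mathfrak a(s,t)y(s)$ funneled through $E_\alpha$ under the constraint $\gamma\ge\eta+2\alpha-\lambda$ — this is exactly the paper's estimates \eqref{stima_reg_M_1}--\eqref{stima_reg_M_2}, and your split at $t/2$ is the paper's choice $t=2\varepsilon$ in disguise. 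The only point you gloss over, which the paper checks explicitly via \eqref{holyanna}, is that the pieces glued at the junction times still satisfy the SG-derivative and remainder requirements of $\mathscr Y_{\alpha,\eta}(0,T)$, but this is a routine verification given \eqref{spez_int}.
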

\begin{proof}
We split the proof into two steps. In the former, we prove existence and uniqueness of the mild solution to the Cauchy problem \eqref{cauchy_prob_rough}, while, in the latter, we study the smoothness of the solution.

{\em Step 1}. We begin by showing that the operator $\Gamma$, defined in \eqref{def_gamma_op}, is a $\frac12$-contraction on the closed subset $\mathcal B$ of ${\mathscr Y}_{\alpha,\eta}(0,\overline T)$ given by
\begin{align*}
\mathcal B=\big\{y\in {\mathscr Y}_{\alpha,\eta}(0,\overline T):y(0)=\psi,\ \|y\|_{{\mathscr Y}_{\alpha,\eta}(0,\overline T)}\le {\mathfrak R}|\psi|_{E_{\eta+\alpha}}\big\},
\end{align*}
where $\overline{T}$ is defined by \eqref{choice_ovT}, i.e.,
\begin{eqnarray*}
\overline{T}=(2\mathfrak{C}_4(1+[x]^2_{C^{\eta}([0,T])}+\|\mathbb{X}\|_{\mathscr C^{2\eta}([0,T]^2_{<})}))^{-\frac{1}{\eta-\alpha}}
\end{eqnarray*}
% Here, 
% \begin{align}
% \label{def_widetilde_R}
% \widetilde{\mathfrak R}
% = & \mathfrak R\vee (L_{\eta+\alpha,\eta+\alpha,T}(1+\|G\|_{\mathscr{L}(E_{\eta+\alpha})}[x]_{C^{\eta}([0,T])}T^{\eta}+K_{\eta+\alpha,\alpha}\|G\|_{\mathscr{L}(E_{\eta+\alpha})})).
% \end{align}
$\mathfrak{C}_4$ and $\mathfrak R$ are the  constant which appear in \eqref{stima_parz_mild_sol} and \eqref{stima_a_priori_Y}, respectively.
We stress that $\mathcal B$ is not empty, since it contains $\Gamma(0)$. 

Let us prove that $\Gamma$ is a $\frac{1}{2}$-contraction in $\mathcal{B}$.
First of all, we observe that $(\Gamma(y))(0)=\psi(0)$ for every $y\in\mathcal{B}$ and Proposition \ref{prop:Gamma_mappa_Y_in_Y} guarantees that $\Gamma$ maps $\mathcal{B}$ into itself. It remains to prove that $\|\Gamma(y)\|_{{\mathscr Y}_{\alpha,\eta}(0,\overline T)}\leq {\mathfrak R}|\psi|_{E_{\eta+\alpha}}$.

From \eqref{stima_parz_mild_sol}, with $[a,b]$ replaced by $[0,\overline T]$ and the definition of $\mathfrak{R}$ (see the end of the proof of Proposition \ref{prop:Gamma_mappa_Y_in_Y}), which implies that
\begin{align*}
(1+K_{\eta+\alpha,\alpha}\|G\|_{\mathscr{L}(E_{\eta+\alpha})} )L_{\eta+\alpha,\eta+\alpha,T}\leq \frac12\mathfrak R,    
\end{align*}
we infer that
the choice of $\overline T$ gives
\begin{align}
\|\Gamma(y)\|_{{\mathscr Y}_{\alpha,\eta}(0,\overline T)}
\leq &(1+K_{\eta+\alpha,\alpha}\|G\|_{\mathscr{L}(E_{\eta+\alpha})} )L_{\eta+\alpha,\eta+\alpha,T}|y(0)|_{E_{\eta+\alpha}}+\frac{1}{2}\|y\|_{{\mathscr Y}_{\alpha,\eta}(0,\overline T)}\notag \\
\leq & \frac{1}{2}\mathfrak{R}|\psi|_{E_{\eta+\alpha}}+\frac{1}{2}{\mathfrak R}|\psi|_{E_{\eta+\alpha}}= {\mathfrak R}|\psi|_{E_{\eta+\alpha}}.
\label{corradini}
\end{align}

This means that $\mathcal B$ is invariant under the action of $\Gamma$.

Showing that $\Gamma:\mathcal B\to \mathcal B$ is a $\frac12$-contraction, is a straightforward consequence of \eqref{corradini}.
Indeed, by Lemma \ref{lemma:lin_int}(i), it follows that $\mathscr{I}_{SGy_2}-\mathscr{I}_{SGy_1}=\mathscr{I}_{SG(y_2-y_1)}$, so that $\Gamma(y_2)-\Gamma(y_1)=\Gamma(y_2-y_1)$ and the function $y_2-y_1$ belongs to $\mathscr{ Y}_{\alpha,\eta}(0,\overline{T})$ since both $y_1$ and $y_2$ belong to such a space.
Moreover, the functions $y_2-y_1$ and $\Gamma(y_2)-\Gamma(y_1)$ vanish at $t=0$, due to the fact that $y_1$, $y_2$, $\Gamma(y_1)$ and $\Gamma(y_2)$ belong to $\mathcal B$. Hence,
estimate \eqref{corradini}, with $y=y_2-y_1$, shows that $\|\Gamma(y_2)-\Gamma(y_1)\|_{\mathscr{Y}_{\alpha,\eta}(0,\overline{T})}\le\frac{1}{2}\|y_2-y_1\|_{\mathscr{Y}_{\alpha,\eta}(0,\overline{T})}$, i.e. $\Gamma$ is a $\frac{1}{2}$-contraction in $\mathcal{B}$ 
as it has been claimed.

Since $\Gamma$ is a $\frac{1}{2}$-contraction on $\mathcal B$, it admits a unique fixed point $y_0$ and so we get the unique mild solution to \eqref{cauchy_prob_rough} in ${\mathscr Y}_{\alpha,\eta}(0,\overline T)$. 

If $T\le\overline T$, then we are done, otherwise we 
argue by recurrence. For this purpose,
we introduce the operator $\Gamma_1:\mathcal{B}_1\to \mathscr Y_{\alpha,\eta}(\overline T,T_1)$, defined by
\begin{align*}
(\Gamma_1(y))(t)=S(t-\overline T)y_0(\overline T)+\mathscr I_{SGy}(\overline{T},t),\qquad\;\, t\in[\overline T, T_1],    
\end{align*}
for every $y$ in the subset $\mathcal{B}_1\subset {\mathscr Y}_{\alpha,\eta}(\overline T,T_1)$, whose elements satisfy the conditions $y(\overline T)=y_0(\overline T)$ and $\|y\|_{{\mathscr Y}_{\alpha,\eta}(\overline T,T_1)}\leq {\mathfrak R}|y_0(\overline T)|_{E_{\eta+\alpha}}$, 
where $T_1=\min\{2\overline T,T\}$. $\mathcal B_1$ is not empty, since the function 
$y=S(\cdot-\overline T)y_0(\overline{T})$
belongs to $\mathcal{B}_1$. This follows arguing as above.

Since the constant $\overline T$ depends only on $\alpha$, $\eta$, $T$, $[x]_{C^{\eta}([0,T])}$, $\|\mathbb{X}\|_{\mathscr C^{2\eta}([0,T]^2_{<})}$, arguing as in the first part of the proof, we can easily show that $\Gamma_1$ is a $\frac{1}{2}$-contraction in $\mathcal{B}_1$ and admits a unique fixed point $y_1$. 

Let us introduce the function $y:[0,T_1]\to E$ defined by
\begin{align*}
y(t)=\begin{cases}
y_0(t), & t\in[0,\overline T], \\
y_1(t), & t\in(\overline T,T_1].
\end{cases}
\end{align*}
Clearly, $y\in C((0,T_1];E_{\eta+\alpha})\cap B([0,T_1];E_{\eta+\alpha})$, Moreover, 
$Gy$ is its SG-derivative. Indeed, there exist functions $R_0\in \mathscr{C}^{\eta+\alpha}([0,\overline{T}]^2_{<};E_{\alpha})$ and $R_1\in \mathscr{C}^{\eta+\alpha}([\overline{T},T_1]^2_{<};E_{\alpha})$ such that
\begin{align*}
&(\hat\delta_1y)(s,t)=S(t-s)Gy(s)(x(t)-x(s))+R_0(s,t),\qquad\;\,(s,t)\in [0,\overline T]^2_{<},\\
&(\hat\delta_1y)(s,t)=S(t-s)Gy(s)(x(t)-x(s))+R_1(s,t),\qquad\;\,(s,t)\in [\overline T,T_1]^2_{<}.
\end{align*}

Suppose that $s\in [0,\overline{T})$ and $t\in (\overline{T},T_1]$. Then, 
\begin{align}
(\hat\delta_1y)(s,t)=&y(t)-S(t-s)y(s)\notag\\
=&(\hat\delta_1y)(\overline{T},t)+S(t-\overline T)(\hat\delta_1y)(s,\overline T)\notag\\
=&S(t-\overline{T})Gy(\overline{T})(x(t)-x(\overline T))+R_1(\overline T,t)\notag\\
&+S(t-\overline{T})(S(\overline T,s)Gy(s)(x(\overline T)-x(s))+R_0(s,\overline T))\notag\\
=&S(t-s)Gy(s)(x(t)-x(s))+
S(t-\overline T)(\hat\delta_1Gy)(s,\overline T)(x(t)-x(\overline{T}))\notag\\
&+R_1(\overline T,t)+S(t-\overline T)R_0(s,\overline T).
\label{holyanna}
\end{align}
Since by assumptions $\hat\delta_1\check y\in \mathscr{C}^{\alpha}([0,\overline{T}]^2_{<};E_{\alpha})$, taking Hypotheses \ref{hyp-main} into account, it is immediate to check that
\begin{align*}
&|S(t-\overline T)(\hat\delta_1Gy)(s,\overline T)(x(t)-x(\overline{T}))+R_1(\overline T,t)+S(t-\overline T)R_0(s,\overline T)|_{E_{\alpha}}\\
\le & L_{\alpha,\alpha}\|\hat\delta_1Gy\|_{\mathscr C^{\alpha}([0,\overline T]^2_{<};E_{\alpha})}[x]_{C^{\eta}([0,T])}|\overline T-s|^{\alpha}|t-\overline{T}|^{\eta}
+\|R_1\|_{\mathscr{C}^{\eta+\alpha}([\overline T,T_1]^2_{<})}|t-\overline T|^{\eta+\alpha}\\
&+L_{\alpha,\alpha}\|R_0\|_{\mathscr{C}^{\eta+\alpha}([0,\overline T]^2_{<};E_{\alpha})}|\overline T-s|^{\eta+\alpha}\\
\le & [L_{\alpha,\alpha}(\|\hat\delta_1Gy\|_{\mathscr C^{\alpha}([0,\overline T]^2_{<};E_{\alpha})}[x]_{C^{\eta}([0,T])}
+\|R_0\|_{\mathscr{C}^{\eta+\alpha}([0,\overline T]^2_{<};E_{\alpha})})\\
&\;\;+\|R_1\|_{\mathscr{C}^{\eta+\alpha}([\overline T,T_1]^2_{<};E_{\alpha})}]|t-s|^{\eta+\alpha},
\end{align*}
where $L_{\alpha,\alpha}=L_{\alpha,\alpha,T}$.
We thus conclude that the function $(s,t)\mapsto(\hat\delta_1y)(s,t)-S(t-s)Gy(s)(x(t)-x(s))$ belongs to $\mathscr{C}^{\eta+\alpha}([0,T_1]^2_{<};E_{\alpha})$. Therefore, the function $Gy$ is the SG-derivative of $y$. In particular, $\check y\in C((0,T_1];E_{\eta+\alpha})\cap B([0,T_1];E_{\eta+\alpha})$.

Further, we observe that $\hat\delta_1\check y=\hat\delta_1Gy$ belongs to $\mathscr{C}^{\alpha}([0,T_1];E_{\alpha})$. Indeed, the computations in the first part of \eqref{holyanna} can be repeated to show that
\begin{align*}
(\hat\delta_1\check y)(s,t)=
(\hat\delta_1\check y)(\overline{T},t)+S(t-\overline T)(\hat\delta_1\check y)(s,\overline T)
\end{align*}
for every $(s,t)\in [0,T_1]^2_{<}$, with $s<\overline{T}<t$. From this formula we deduce that
\begin{align*}
|(\hat\delta_1\check y)(s,t)|_{E_{\alpha}}\le &
|(\hat\delta_1\check y)(\overline{T},t)|_{E_{\alpha}}+L_{\alpha,\alpha}|(\hat\delta_1\check y)(s,\overline T)|_{E_{\alpha}}\\
\le &(\|\hat\delta_1\check y\|_{\mathscr{C}^{\alpha}([\overline{T},T_1]^2_{<};E_{\alpha})}+L_{\alpha,\alpha}
\|\hat\delta_1\check y\|_{\mathscr{C}^{\alpha}([0,\overline{T}]^2_{<};E_{\alpha})})|t-s|^{\alpha}.
\end{align*}

This formula, combined with the fact that $\hat\delta_1\check y$ belongs to $\mathscr{C}^{\alpha}([0,\overline{T}]^2_{<};E_{\alpha})$ and
to $\mathscr{C}^{\alpha}([\overline{T},T_1]^2_{<};E_{\alpha})$, allows us to conclude that $y$ belongs to $\mathscr{Y}_{\alpha,\eta}(0,T_1)$. Finally, we claim that 
\begin{align}
\label{sol_mild_y_unione}
y(t)=S(t)\psi+\mathscr I_{SGy}(0,t)     
\end{align}
for every $t\in [0,T_1]$. Clearly, this formula holds true if $t\in [0,\overline{T}]$. On the other hand, if $t\in [\overline{T},T_1]$, then 
\begin{align*}
y(t)
= & S(t-\overline T)y_0(\overline T)+\mathscr I_{SGy_1}(\overline T,t) \notag \\
= & S(t-\overline T)(S(\overline T)\psi+\mathscr I_{SGy}(0,\overline T))+\mathscr I_{SGy}(\overline T,t).
% \notag \\
% = & S(t)\psi+\mathscr I_{SGy}(0,t).
\end{align*}
Using the semigroup property and
\eqref{spez_int}, we conclude that \eqref{sol_mild_y_unione} holds true also in the interval 
$[\overline{T},T_1]$. 

We have so proved that function $y$ is a mild solution to the Cauchy problem \eqref{cauchy_prob_rough}. If $T_1=T$, then we are done, otherwise we repeat this procedure. 
Let $\overline n\in\N$ be the minimum integer such that $\overline n\overline T\geq T$. We iterate the above arguments to construct functions $y_j$, defined in $[j\overline T,(j+1)\overline T]$, with $j=0,\ldots,\overline n-1$ and then define the function $y:[0,T]\to E$ by setting 
\begin{align*}
y(t)=
\begin{cases}
y_0(t), & t\in[0,\overline T], \\
y_1(t), & t\in (\overline T,2\overline T], \\
\ \ \vdots & \ \ \ \ \ \vdots \\
y_{n-1}(t), & t\in ((\overline n-1)\overline T,T].
\end{cases}
\end{align*}

The function $y$ turns out to be the mild solution to \eqref{cauchy_prob_rough} (the uniqueness follows from estimate \eqref{stima_a_priori_Y}).

{\em Step 2.} Here, we study the regularity properties of the mild solution $y$ to \eqref{cauchy_prob_rough}, obtained in Step 1. To prove estimate \eqref{estim-1}, we use a bootstrap argument.

At first we notice that, from \eqref{stima_int_completa-1} and Hypotheses \ref{hyp-main}(iii)-(a), we can infer that
$y(t)\in E_{\xi}$ for every $t\in (0,T]$
and $\xi\in [\eta+\alpha,1)$, and
\begin{align}
|y(t)|_{E_\xi}
= & |S(t)\psi+\mathscr I_{SGy}(0,t)|_{E_\xi} \notag \\
\leq &  L_{\eta+\alpha,\xi,T}|\psi|_{E_{\eta+\alpha}}t^{\eta+\alpha-\xi}+\|\mathscr I_{SGy}\|_{\mathscr C^{2\eta+\alpha-\xi}([0,T]^2_<;E_\xi)}T^{2\eta+\alpha-\xi} \notag \\
\leq & \mathfrak C_1 t^{\eta+\alpha-\xi}
\label{stima_reg_y_1}
\end{align}
for every $t\in(0,T]$ and $\xi\in [\eta+\alpha,1)$, where $\mathfrak C_1$ is a positive constant which depends on $T$, $\alpha$, $\eta$, $G$, $[x]_{C^{\eta}([0,T])}$, $\|\mathbb X\|_{\mathscr{C}^{2\eta}([0,T]^2_{<})}$ and
the constants which appear in Hypotheses \ref{hyp-main}. 

To prove that actually, $y(t)\in E_{1+\mu}$ for every $t\in (0,T]$ and $\mu\in [0,2\eta+\alpha-1)$, we use \eqref{stima_reg_y_1}, together with Lemma \ref{lem:reg_int_conv_rough} with 
$\check f=G\check y$ and $R^{F}=R^{GY}$ and a proper choice of $\lambda>0$,  $\omega=\alpha$ and $\omega_1=\eta+\alpha$. More precisely, we take $\lambda$ in the open interval $((\eta+2\alpha-1)\vee 0, 2\eta+\alpha-1)$. Note that this interval is not empty, since
$\alpha<\eta$ and $2\eta+\alpha>1$, and $\lambda<\alpha$ since $2\eta<1$.
We aim at proving that there exist positive constants $\mathfrak{C}_2$ and $\mathfrak{C}_3$,
which depend at most on $T$, $\alpha$, $\eta$, $G$, $[x]_{C^{\eta}([0,T])}$, $\|\mathbb{X}\|_{\mathscr C^{2\eta}([0,T]^2_{<})}$ and the constants in Hypotheses \ref{hyp-main}, such that
\begin{align}
&\|\hat\delta_1G\check{y}\|_{\mathscr{C}^{\alpha}([0,T]^2_{<};E_{\alpha})}\le\mathfrak{C}_2,
\label{giulio}
\\[1mm]
&\|R^{GY}\|_{\mathscr C^{\eta+\alpha-\lambda}([\varepsilon,T]^2_<;E_{\lambda})}\leq \mathfrak C_3 \varepsilon^{\lambda-\alpha}, \qquad \;\, \varepsilon\in(0,T).
\label{cesare}
\end{align}

Once these estimates are proved, we can apply Lemma \ref{lem:reg_int_conv_rough} and conclude that $\mathscr I_{SGy}\in \mathscr C^{2\eta+\alpha-\xi}([\varepsilon,T]^2_<;E_{\xi})$ for every $\xi\in[1,\lambda+1)$ and there exists a positive constant $\mathfrak C_4$, which depends on $T$, $\alpha$, $\eta$, $G$, $[x]_{C^{\eta}([0,T])}$, $\|\mathbb{X}\|_{\mathscr C^{2\eta}([0,T]^2_{<})}$ and the constants in Hypotheses \ref{hyp-main}, but not on $\varepsilon$, such that
\begin{align}
\label{stima_reg_y_int_epsilon}
 \|\mathscr I_{SGy}\|_{\mathscr C^{2\eta+\alpha-\xi}([\varepsilon,T]^2_<;E_\xi)}
 \leq \mathfrak C_4 \varepsilon^{\lambda-\alpha}.
\end{align}

We begin by estimating $\hat\delta_1 G\check y$. Since $y=\Gamma(y)$, it follows that $\check y=Gy\in B([0,T];E_{\eta+\alpha})$. Therefore, arguing as in \eqref{stima_delta_1Gchecky} we get
\begin{align*}
|(\hat\delta_1G\check{y})(s,t)|_{E_\alpha}   \leq & \mathfrak{g}_{\alpha}\|\hat\delta_1\check{y}\|_{\mathscr C^\alpha([0,T]^2_<;E_\alpha)}|t-s|^{\alpha}  \\
& +C_{\eta+\alpha,\alpha,T}\mathfrak{g}_{\alpha,\eta+\alpha}\|\check{y}\|_{B([0,T];E_{\eta+\alpha})}|t-s|^\eta 
\end{align*}
for every $(s,t)\in [0,T]^2_<$. Hence, $\hat\delta_1G\check y$ belongs to $\mathscr C^{\alpha}([0,T]^2_<;E_{\alpha})$ and estimate \eqref{giulio} follows.

As far as $R^{GY}$ is concerned, we adapt the arguments used in the proof of  \eqref{stima_S_der_Gy} with a different choice of the parameters. By \eqref{forma_rem_GY}, $R^{GY}$ is split into the sum of three terms which we estimate separately.
Recalling that $\check y=Gy$ and $E_{\alpha}$ is continuously embedded into $E_{\lambda}$, since $\lambda<\alpha$, we get
\begin{align}
&|[G,\mathfrak a(s,t)]\check y(s)(x(t)-x(s))|_{E_{\lambda}}\notag\\
\le & K_{\alpha,\lambda}|[G,\mathfrak a(s,t)]\check y(s)(x(t)-x(s))|_{E_\alpha} \notag \\
% \leq & K_{\alpha,\lambda} |[G,\mathfrak a(s,t)](\check y(s))(x(t)-x(s))|_{E_\alpha} \\
\leq & 
K_{\alpha,\lambda}
\mathfrak{g}_{\alpha,\eta+\alpha}C_{\eta+\alpha,\alpha,T}[x]_{C^\eta([0,T])}\|\check{y}\|_{B([0,T];E_{\eta+\alpha})}|t-s|^{2\eta} 
\label{stima_reg_M_1}
\end{align}
for every $(s,t)\in[0,T]^2_<$. Similarly, taking into account \eqref{stima_reg_y_1}, with $\xi=\eta+2\alpha-\lambda$ (which belongs to the interval $[\eta+\alpha,1)$, due to the condition imposed on $\lambda$) and recalling that the fixed point of operator $\Gamma$ satisfies \eqref{stima_a_priori_Y}, we can estimate
\begin{align}
|[G,\mathfrak a(s,t)]y(s)|_{E_\lambda}
\leq & K_{\alpha,\lambda}\mathfrak{g}_{\alpha}C_{\eta+2\alpha-\lambda,\alpha,T}|y(s)|_{E_{\eta+2\alpha-\lambda}}|t-s|^{\eta+\alpha-\lambda} \notag \\
& +C_{\eta+\alpha,\lambda,T}\mathfrak{g}_{\eta+\alpha}|y(s)|_{E_{\eta+\alpha}}|t-s|^{\eta+\alpha-\lambda}\notag\\
\le &
K_{\alpha,\lambda}\mathfrak{g}_{\alpha}C_{\eta+2\alpha-\lambda,\alpha,T}\mathfrak{C}_1s^{\lambda-\alpha}|t-s|^{\eta+\alpha-\lambda} \notag \\
& +C_{\eta+\alpha,\lambda,T}\mathfrak{g}_{\eta+\alpha}\mathfrak{R}|\psi|_{E_{\eta+\alpha}}|t-s|^{\eta+\alpha-\lambda}
\label{stima_reg_M_3}
\end{align}
for every $(s,t)\in(0,T]^2_<$. 
Finally, the term $R^{GY}$ can be estimated as follows:
\begin{align}
\label{stima_reg_M_2}
|GR^Y(s,t)|_{E_\alpha}
\leq \mathfrak{g}_{\alpha}\|R^Y\|_{\mathscr C^{\eta+\alpha}([0,T]^2_<;E_\alpha)}|t-s|^{\eta+\alpha}, \qquad\;\, (s,t)\in[0,T]^2_<.
\end{align}

From \eqref{stima_reg_M_1}-\eqref{stima_reg_M_2}, estimate \eqref{cesare} follows easily, recalling that $\alpha<\eta$. 

Using \eqref{stima_reg_y_int_epsilon} we can complete the proof of \eqref{estim-1}. For this purpose, we fix $\mu\in[0,2\eta+\alpha-1)$, $\varepsilon\in (0,T/2)$ and split 
\begin{align*}
y(t)=S(t-\varepsilon)y(\varepsilon)+\mathscr I_{SGy}(\varepsilon,t), \qquad\;\, t\in [\varepsilon,T]. 
\end{align*}

Since \eqref{stima_reg_y_int_epsilon} holds true for every $\xi\in [1,\lambda+1)$, if we take $\lambda=\frac{1}{2}((2\eta+\alpha-1)+(\mu\vee(\eta+2\alpha-1)))$, then
$\lambda\in ((\eta+2\alpha-1)\vee 0,2\eta+\alpha-1)$ and
$1+\mu$ belongs to the interval $[1,\lambda+1)$
Hence, from Hypothesis \ref{hyp-main}(iii)-(a), \eqref{stima_a_priori_Y} and \eqref{stima_reg_y_int_epsilon}, with $\xi=1+\mu$, we infer that $y(t)\in E_{1+\mu}$ for every $t\in[\varepsilon,T]$ and 
\begin{align*}
|y(t)|_{E_{1+\mu}}
\leq & L_{\eta+\alpha,1+\mu,T}(t-\varepsilon)^{\eta+\alpha-1-\mu}|y(\varepsilon)|_{E_{\eta+\alpha}}
+|\mathscr I_{SGy}(\varepsilon,t)|_{E_{1+\mu}} \\
\leq & L_{\eta+\alpha,1+\mu,T}\mathfrak R |\psi|_{E_{\eta+\alpha}}\varepsilon^{\eta+\alpha-1-\mu}+\mathfrak C_2T^{2\eta+\alpha-1-\eta}\varepsilon^{\lambda-\alpha}
\end{align*}
for every $t\in[2\varepsilon,T]$.

The conditions on $\lambda$ imply that
$\eta+\alpha-1-\mu<\lambda-\alpha$. Hence, from the previous estimate it follows that there exists a positive constant $\mathfrak C_5$, which depends on $T$, $\alpha$, $\eta$, $\mu$, $G$, $[x]_{C^{\eta}([0,T])}$, $\|\mathbb X\|_{\mathscr C^{2\eta}([0,T]^2_{<})}$ and the constants in Hypotheses \ref{hyp-main}, but it is independent of  $\varepsilon$, such that
\begin{align*}
|y(t)|_{E_{1+\mu}}\leq \mathfrak C_5\varepsilon^{\eta+\alpha-1-\mu}, \qquad\;\, t\in[2\varepsilon,T].    
\end{align*}
Choosing $t=2\varepsilon$, estimate  \eqref{estim-1} follows immediately.
\end{proof}

\begin{remark}
{\rm 
The smoothing effect of the equation on the initial datum has been claimed also in \cite{GHN} in a more general framework, albeit with no proof.}    
\end{remark}

\begin{remark}
\label{rmk:cont_y_D(A)}
{\rm The unique mild solution $y$ to \eqref{cauchy_prob_rough} is continuous in $(0,T]$ with values in $E_1=D(A)$. Indeed, from \eqref{stima_reg_y_int_epsilon} we know that $\mathscr I_{SGy}\in\mathscr C^{2\eta+\alpha-1}([\varepsilon,T]^2_<;E_1)$, for every $\varepsilon\in(0,T)$, and Remark \ref{strong-cont-smgr} shows that the function $S(\cdot)\psi$ is continuous in $(0,\infty)$ with values in $E_1$.}
\end{remark}

The following remark will be used in the next section to approximate the solution $y$ provided by Theorem \ref{cauchy_prob_rough} with smoother solutions.

\begin{remark}
\label{rmk:es_sol_mild_2}
{\rm We stress again that, without condition \eqref{cond-x}, the SG-derivative of a function $y$, when existing, is not unique. Nevertheless, we can still prove the existence of a unique solution $y$ to the Cauchy problem \eqref{cauchy_prob_rough} with SG-derivative $Gy$. The main difference is in the definition of both the space $\mathscr{Y}_{\alpha,\eta}(0,T)$ and the operator $\Gamma$. Indeed, in this situation, ${\mathscr Y}_{\alpha,\eta}(0,T)$ is the set of all pairs $Y=(y,z)$ such that
$y\in C((0,T];E_{\eta+\alpha})\cap B([0,T];E_{\eta+\alpha})$, $z\in C((0,T];E_\alpha) \cap  B([0,T];E_\alpha)$, $z$ is a SG-derivative of $y$ with remainder  $R^Y\in \mathscr C^{\eta+\alpha}([0,T]^2_<;E_\alpha)$ and $\hat\delta_1z$ belongs to $\mathscr C^\alpha([0,T]^2_<;E_\alpha)$.
${\mathscr Y}_{\alpha,\eta}(0,T)$ is a Banach space when endowed with the norm
\begin{align*}
\|Y\|_{\mathscr Y_{\alpha,\eta}(0,T)}
= & \|y\|_{B([0,T];E_{\eta+\alpha})}+ \|z\|_{B([0,T];E_\alpha)}+\|R^Y\|_{\mathscr C^{\eta+\alpha}([0,T]^2_<;E_\alpha)} \\
& +\|\hat\delta_{1}z\|_{\mathscr C^\alpha([0,T];E_\alpha)}
\end{align*}
for every 
$Y=(y,z)\in {\mathscr Y}_{\alpha,\eta}(0,T)$. 

The operator $\Gamma$ is defined by 
$\Gamma(Y)=(S(\cdot)\psi+\mathscr I_{SGy},Gy)$ for every $Y=(y,z)\in {\mathscr Y}_{\alpha,\eta}(0,T)$ (this operator is well-defined thanks to Remark \ref{rmk:un_rough_young_int}(i)). Adapting the arguments in the proofs of Proposition \ref{prop:Gamma_mappa_Y_in_Y} and Theorem \ref{thm:ex_un_mild_solution_rough}, it can be easily proved that
problem \eqref{cauchy_prob_rough} admits a solution $y\in C((0,T];E_{\eta+\alpha})\cap B([0,T];E_{\eta+\alpha})$ with the function $Gy$ as SG-derivative.
To achieve this result is suffices to show that $\Gamma$ is a contraction mapping in the space
\begin{align*}
\mathcal B=\big\{Y\in {\mathscr Y}_{\alpha,\eta}(0,\overline T):y(0)=\psi, \ z(0)=G\psi,\ \|Y\|_{{\mathscr Y}_{\alpha,\eta}(0,\overline T)}\le {\mathfrak R}|\psi|_{E_{\eta+\alpha}}\big\},
\end{align*} 
which is not empty since it contains the pair
$(S(\cdot)\psi+\mathscr{I}_{SGS\psi}(0,\cdot),GS(\cdot)\psi)$
(see Remark \ref{rem-utile}).}
\end{remark}

\section{Integral representation of the mild solution}
\label{sect-4}
In this section we assume Hypotheses 
\ref{hyp-main} and \ref{hyp-3.1}, and show the continuity of the mild solution $y$ to \eqref{cauchy_prob_rough} with respect to the pair $(x,\mathbb X)$, under a suitable topology. We also prove that, if $(x,\mathbb X)$ is a geometric rough path, then the mild solution $y$ to \eqref{cauchy_prob_rough} admits an integral formulation, i.e., $y$ satisfies the equation
\begin{align}
y(t)=\psi+\int_0^tAy(s)ds+\mathscr I_{Gy}(0,t) \qquad\;\, t\in[0,T],    
\label{rappr_int_sol}
\end{align}
where $\mathscr I_{Gy}$ is the rough integral of $y$ with respect to $(x,\mathbb X)$ defined in Proposition \ref{prop:old_sew_map}. 

We find it useful to introduce the following definition.

\begin{definition}
We denote by $\mathcal D^{\eta}([a,b])$ the space of couples $X=(x,\mathbb X)$ such that $x\in C^\eta([a,b])$ and $\mathbb X\in \mathscr C^{2\eta}([a,b]^2_{<})$ satisfies condition \eqref{form-diff-X}, i.e.,
\begin{eqnarray*}
\mathbb{X}(s,u)-\mathbb{X}(s,t)-\mathbb{X}(t,u)=(x(t)-x(s))(x(u)-x(t)),
\end{eqnarray*}
for every $(s,t,u)\in [a,b]^3_{<}$. We set
\begin{align*}
[x]_{\mathcal D^{\eta}([a,b])}:=[x]_{C^\eta([a,b])}+\|\mathbb X\|_{\mathscr C^{2\eta}([a,b]^2_<)}    
\end{align*}
for every $X=(x,\mathbb X)\in \mathcal D^{\eta}([a,b])$.
\end{definition}

% \begin{remark}
% {\rm The space $\mathcal D^{\eta}([a,b])$ is not a vector space since the relation which defines $\mathbb X$ is not linear. However, it is a complete metric space if we consider the distance
% \begin{align*}
% d_\eta(X',X)
% := & \|x'-x\|_{C^\eta([a,b])}+\|\mathbb X'-\mathbb X\|_{\mathscr C^{2\eta}([a,b]^2_<)}=\|x'-x\|_\infty+[X'-X]_{\mathcal D^{\eta}([a,b])}
% \end{align*}
% for every $X=(x,\mathbb X)$, $X'=(x',\mathbb X')\in \mathcal D^{\eta}([a,b])$.}
% \end{remark}

\begin{definition}
A couple $(x,\mathbb X)\in \mathcal D^{\eta}([a,b])$ is a geometric rough path if 
\begin{align*}
\mathbb X(s,t)=\frac12(x(t)-x(s))^2, \qquad (s,t)\in[0,T]^2_<.        
\end{align*}
\end{definition}

The following result will be very useful for our purposes.

\begin{proposition}
\label{conv_rough_paths}
For every $X=(x,\mathbb X)\in \mathcal D^{\eta}([a,b])$, there exist a sequence $(x_n,\mathbb X_n)_{n\in\N}\subset \mathcal D^{\eta}([a,b])$ of geometric rough paths, with $x_n\in C^1([a,b])$, and a function $h\in C^{2\eta}([a,b])$ such that the sequence $(x_n,\mathbb X_n+\delta_1h)_{n\in\N}$ is bounded in $C^{\eta}([a,b])\times\mathscr{C}^{2\eta}([a,b]^2_{<})$ and converges to $X=(x,\mathbb X)$ in $C^{\eta'}([a,b])\times\mathscr{C}^{2\eta'}([a,b]^2_{<})$ for every $\eta'<\eta$. Further, we may assume $\delta_1h=0$ if $X$ is a geometric rough path.
\end{proposition}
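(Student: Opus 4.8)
The plan is to decouple the statement into a purely algebraic decomposition of $\mathbb X$ and a classical scalar smoothing of $x$. First I would record that, by \eqref{form-diff-X}, the canonical lift $\mathbb X^{\mathrm{can}}(s,t):=\frac12(x(t)-x(s))^2$ satisfies Chen's relation, while $\mathbb X$ does so by Hypothesis \ref{hyp:X_2}. Hence the difference $\mathbb X-\mathbb X^{\mathrm{can}}$ lies in the kernel of $\delta_2$, which by Remark \ref{rem-country} coincides with the image of $\delta_1$; concretely, setting $h(t):=\mathbb X(a,t)-\frac12(x(t)-x(a))^2$ gives $\mathbb X=\frac12(\delta_1 x)^2+\delta_1 h$ on $[a,b]^2_<$. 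Since $\mathbb X,\mathbb X^{\mathrm{can}}\in\mathscr C^{2\eta}([a,b]^2_<)$, so does $\delta_1 h$, which is exactly the statement $h\in C^{2\eta}([a,b])$. This $h$ is the drift appearing in the statement, and it is constant (so $\delta_1 h\equiv0$) precisely when $X$ is geometric, which disposes of the last assertion.

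With $h$ fixed, the remaining task is to approximate $\mathbb X^{\mathrm{can}}$ by the canonical lifts of smooth paths. I would take $x_n:=x\ast\rho_n$, the mollification of a H\"older-seminorm-preserving extension of $x$ to $\R$ (via, e.g., a McShane-type extension or reflection, to avoid boundary corrections), so that $x_n\in C^\infty([a,b])\subset C^1([a,b])$, $x_n\to x$ uniformly on $[a,b]$, and $[x_n]_{C^\eta}\le[x]_{C^\eta}$ because convolution with a probability density cannot increase the H\"older seminorm. Boundedness in $C^\eta$ together with uniform convergence then yields $x_n\to x$ in $C^{\eta'}$ for every $\eta'<\eta$, via the elementary interpolation bound $[x_n-x]_{C^{\eta'}}\le[x_n-x]_{C^\eta}^{\eta'/\eta}(2\|x_n-x\|_\infty)^{1-\eta'/\eta}\to0$.

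Setting $\mathbb X_n(s,t):=\frac12(x_n(t)-x_n(s))^2$, each pair $(x_n,\mathbb X_n)$ is a geometric rough path in $\mathcal D^\eta([a,b])$ (Chen's relation being \eqref{form-diff-X} applied to $x_n$). The bound $[x_n]_{C^\eta}\le[x]_{C^\eta}$ immediately gives $\|\mathbb X_n\|_{\mathscr C^{2\eta}}\le\frac12[x]_{C^\eta}^2$, so $(\mathbb X_n)$ is bounded in $\mathscr C^{2\eta}([a,b]^2_<)$; and the factorization $\mathbb X_n-\mathbb X^{\mathrm{can}}=\frac12(\delta_1 x_n-\delta_1 x)(\delta_1 x_n+\delta_1 x)$, estimated factorwise in $C^{\eta'}$, gives $\|\mathbb X_n-\mathbb X^{\mathrm{can}}\|_{\mathscr C^{2\eta'}}\le\frac12[x_n-x]_{C^{\eta'}}([x_n]_{C^{\eta'}}+[x]_{C^{\eta'}})\to0$, the last factor staying bounded because $(x_n)$ is bounded in $C^\eta$. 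Adding the fixed increment $\delta_1 h\in\mathscr C^{2\eta}$ preserves both the $\mathscr C^{2\eta}$-boundedness and the $\mathscr C^{2\eta'}$-convergence, and since $\mathbb X^{\mathrm{can}}+\delta_1 h=\mathbb X$ I conclude that $(x_n,\mathbb X_n+\delta_1 h)$ is bounded in $C^\eta([a,b])\times\mathscr C^{2\eta}([a,b]^2_<)$ and converges to $(x,\mathbb X)$ in $C^{\eta'}([a,b])\times\mathscr C^{2\eta'}([a,b]^2_<)$ for every $\eta'<\eta$.

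The one genuinely technical ingredient is the construction of the smoothing sequence with a uniform $C^\eta$-bound on a compact interval; I expect the extension step to be the main (though routine) obstacle, handled by extending $x$ to $\R$ with the same H\"older seminorm before convolving, so that $[x\ast\rho_n]_{C^\eta}\le[x]_{C^\eta}$ holds with no boundary terms. All remaining estimates reduce to the interpolation of H\"older norms and the factorization of a difference of squares displayed above, which are purely elementary.
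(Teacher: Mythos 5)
Your proposal is correct and follows essentially the same route as the paper: mollify a H\"older-seminorm-preserving extension of $x$ to $\R$, use the interpolation inequality to get $C^{\eta'}$-convergence, take the canonical lifts $\mathbb X_n=\frac12(\delta_1x_n)^2$, and invoke the kernel-of-$\delta_2$ decomposition of Remark \ref{rem-country} to write $\mathbb X=\frac12(\delta_1x)^2+\delta_1h$ with $h\in C^{2\eta}([a,b])$. The only differences are cosmetic: you spell out the convergence of $\mathbb X_n$ via the difference-of-squares factorization (which the paper leaves as "easy to show") and use a McShane/reflection extension where the paper uses the constant extension.
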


\begin{proof}
The proof is straightforward. Indeed, it suffices to extend $x$ with a function $x\in C^{\eta}_b(\R)$ (this can be done, for instance, by setting $x(t)=x(a)$ for every $t<a$ and $x(t)=x(b)$ for every $t>b$). We still denote by $x$ the so obtained function and we regularize it by convolution with a standard sequence of mollifiers. It is well known that the so obtained sequence $(x_n)_{n\in\N}$ converges uniformly in $\R$ to $x$ and $\|x_n\|_{C^{\eta}([a,b])}\le
\|x\|_{C^{\eta}([a,b])}$ for every $n\in\N$. Moreover, since
\begin{eqnarray*}
\|x_n-x\|_{C^{\eta'}([a,b])}\le C_{\eta',\eta}\|x_n-x\|_{\infty}^{1-\frac{\eta'}{\eta}}\|x_n-x\|_{C^{\eta}([a,b])}^{\frac{\eta'}{\eta}}  
\end{eqnarray*}
for every $n\in\N$, it follows easily that $x_n$ converges to $x$ in $C^{\eta'}([a,b])$ for every $\eta'<\eta$.

Now, it is easy to show that the sequence $(\widetilde{\mathbb X}_n)$ defined by $\widetilde{\mathbb X}(s,t)=\frac{1}{2}(x(t)-x(s))^2$ for every $(s,t)\in [a,b]^2$ is bounded in $\mathscr{C}^{2\eta}([a,b]^2)$ and it converges in $\mathscr{C}^{2\eta'}([a,b]^2_<)$ to the function 
$\widetilde{\mathbb X}$ defined by $\widetilde{\mathbb X}(s,t)$ for every $(s,t)\in [a,b]^2_{<}$. The assertion follows from observing that $\mathbb{X}=\widetilde{\mathbb X}+\delta_1h$ for some function $h\in C^{2\eta}([a,b])$ (see Remark \ref{rem-country}.
\end{proof}

Next lemma shows that $\mathscr I_{Gy}$ is well-defined if $y\in {\mathscr Y}_{\alpha,\eta}(0,T)$ with $\alpha>1-2\eta$.

\begin{lemma}
\label{lem:esistenza_int_rough}
Fix $\alpha>1-2\eta$ and $y\in {\mathscr Y}_{\alpha,\eta}(0,T)$. Then, the following properties are satisfied.
\begin{enumerate}[\rm (i)]
\item 
$Gy$ admits Gubinelli derivative $G\check y\in C^\alpha([0,T];E)$ with remainder $\widetilde R^{GY}$, which belongs to $\mathscr C^{\eta+\alpha}([0,T]^2_<;E_{\alpha})$ and is given by
\begin{align}
\label{gub_deriv_Y}
\widetilde R^{GY}(s,t)
= & G\mathfrak a(s,t)\check y(s)(x(t)-x(s))+G\mathfrak a(s,t) y(s)+ GR^{Y}(s,t) \end{align}
for every $(s,t)\in [0,T]^2_{<}$;
\item the rough integral $\mathscr I_{Gy}$ is well-defined and belongs to $\mathscr C^{2\eta+\alpha}([0,T]^2_<;E)$.    
\end{enumerate}
\end{lemma}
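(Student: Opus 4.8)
The plan is to treat the two assertions in turn, reducing (ii) to the general rough-integral construction of Proposition \ref{prop:old_sew_map} once (i) has pinned down the Gubinelli data of $Gy$. The derivation of (i) runs exactly parallel to Lemma \ref{lem:ex_S_der_Gf}, the only change being that the Gubinelli increment uses the \emph{plain} difference $\delta_1(Gy)(s,t)=Gy(t)-Gy(s)$ in place of the semigroup-twisted $\hat\delta_1$, so that the leading term is $G\check y(s)(x(t)-x(s))$ without the factor $S(t-s)$.

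For (i) I would start from $\delta_1(Gy)(s,t)=G\big[(\hat\delta_1y)(s,t)+\mathfrak a(s,t)y(s)\big]$ and insert the SG-expansion $(\hat\delta_1y)(s,t)=S(t-s)\check y(s)(x(t)-x(s))+R^Y(s,t)$ granted by $y\in\mathscr Y_{\alpha,\eta}(0,T)$. Writing $GS(t-s)=G+G\mathfrak a(s,t)$ then splits off the term $G\check y(s)(x(t)-x(s))$, and what remains is precisely the right-hand side of \eqref{gub_deriv_Y}; this simultaneously exhibits $G\check y$ as the Gubinelli derivative and identifies $\widetilde R^{GY}$. That $G\check y\in C^\alpha([0,T];E)$ is immediate from $\check y\in C^\alpha([0,T];E)$ (Remark \ref{rmk:reg_hat_delta_1_y}) and the boundedness of $G$ on $E$. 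For the remainder I would estimate the three summands of \eqref{gub_deriv_Y} separately: the factor $\mathfrak a(s,t)=S(t-s)-I$ in the first two is controlled by \eqref{stime_smgr}(b), which yields a factor $|t-s|^{\alpha}$ when applied to $\check y(s)\in E_\alpha$ and a factor $|t-s|^{\eta+\alpha}$ when applied to $y(s)\in E_{\eta+\alpha}$, provided the output is measured in the base space $E$, while the last summand $GR^Y$ retains its $E_\alpha$-regularity directly from $R^Y\in\mathscr C^{\eta+\alpha}([0,T]^2_<;E_\alpha)$. Multiplying the first summand by $(x(t)-x(s))=O(|t-s|^\eta)$ shows each piece is of order $|t-s|^{\eta+\alpha}$, so $\widetilde R^{GY}\in\mathscr C^{\eta+\alpha}([0,T]^2_<;E)$, the genuinely $E_\alpha$-valued part being carried by $GR^Y$.

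Assertion (ii) then follows by feeding this Gubinelli datum into Proposition \ref{prop:old_sew_map}: with $\zeta=\alpha$, $\omega=0$ (from $G\check y\in C^\alpha([0,T];E)$) and $\rho=\eta+\alpha$, $\lambda=0$ (from the remainder estimate), the threshold exponent is $\mu=(\rho+\eta)\wedge(\zeta+2\eta)=(2\eta+\alpha)\wedge(2\eta+\alpha)=2\eta+\alpha$, which exceeds $1$ precisely because of the standing assumption $\alpha>1-2\eta$. Hence $\mathscr I_{Gy}(s,t)=g_{G\check y}(s,t)-M_{\delta_2 g_{G\check y}}(s,t)$ is well defined through \eqref{def_int_rough}, with $g_{G\check y}$ as in \eqref{funct-A}, and the sewing estimate delivers the compensator $M_{\delta_2 g_{G\check y}}$ (equivalently $\mathscr I_{Gy}-g_{G\check y}$) in $\mathscr C^{2\eta+\alpha}([0,T]^2_<;E)$.

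The main obstacle, and the only place where real care is needed, is the regularity bookkeeping for $\widetilde R^{GY}$: one must spend the smoothing of $\mathfrak a(s,t)$ in \eqref{stime_smgr}(b) against the available spatial regularity of $\check y$ and $y$ so that every summand reaches the exponent $\eta+\alpha$, which forces the first two terms to be read in $E$ rather than in $E_\alpha$. The two constraints this imposes, namely $\rho+\eta=2\eta+\alpha>1$ and $\zeta+2\eta=2\eta+\alpha>1$, are both equivalent to $\alpha>1-2\eta$; this is exactly why that inequality is hypothesized, being the quantitative form of the space/time trade-off that renders the rough convolution integral meaningful.
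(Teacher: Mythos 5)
Your proposal is correct and follows essentially the same route as the paper's proof: the identical decomposition $Gy(t)-Gy(s)=G\check y(s)(x(t)-x(s))+\widetilde R^{GY}(s,t)$ obtained by writing $GS(t-s)=G+G\mathfrak a(s,t)$, the same three-term estimate of the remainder using \eqref{stime_smgr}(b) (measured in $E$, exactly as the paper does), and the same application of Proposition \ref{prop:old_sew_map} with $\omega=\lambda=0$, $\zeta=\alpha$, $\rho=\eta+\alpha$, giving $\mu=2\eta+\alpha>1$. Your closing observation that the $\mathscr C^{2\eta+\alpha}$ bound is really carried by $\mathscr I_{Gy}-g_{G\check y}$ also matches what the paper's proof actually establishes.
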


\begin{proof}
(i) We fix $y\in\mathscr{Y}_{\alpha,\eta}(0,T)$ and observe that, for every $(s,t)\in[0,T]^2_<$, it holds that
\begin{align*}
Gy(t)-Gy(s)
= & G(\hat\delta_1y)(s,t)+G\mathfrak a(s,t)y(s) \\
= & GS(t-s)\check y(s)(x(t)-x(s))+G\mathfrak a(s,t)y(s)+ GR^Y(s,t) \\
= & G\check y(s)(x(t)-x(s))+G\mathfrak a(s,t)\check y(s)(x(t)-x(s))+G\mathfrak a(s,t) y(s) \\
& + GR^{Y}(s,t)\\
=& G\check y(s)(x(t)-x(s))+\widetilde R^{GY}(s,t).
\end{align*}
Note that 
\begin{align}
|\widetilde R^{GY}(s,t)|_E    
\leq \mathfrak{g}_0\big(&C_{\alpha,0,T}[x]_{C^\eta([0,T])}\|\check y\|_{B([0,T];E_\alpha)}+C_{\eta+\alpha,0,T}\|y\|_{B([0,T];E_{\eta+\alpha})}\notag \\
& +K_{\alpha,0}\|R^{Y}\|_{\mathscr C^{\eta+\alpha}([0,T]^2_<;E_\alpha)}\big)|t-s|^{\eta+\alpha}
\label{intro-mario}
\end{align}
for every $(s,t)\in[0,T]^2_<$. This implies that $Gy$ admits Gubinelli derivative $G\check y$ with respect to $x$ with remainder $\widetilde R^{GY}$ and 
\begin{align*}
\|\widetilde R^{GY}\|_{\mathscr C^{\eta+\alpha}([0,T]^2_<;E)}\leq (C_{\alpha,0,T}\vee C_{\eta+\alpha,0,T}\vee K_{\alpha,0})(1+[(x,\mathbb X)]_{\mathcal D^{\eta}([0,T])})\|y\|_{{\mathscr Y}_{\alpha,\eta}(0,T)}.
\end{align*}
Further, for every $(s,t)\in[0,T]^2_<$ we can estimate
\begin{align*}
|G\check y(t)-G\check y(s)|_{E}
=& |G(\hat\delta_1\check y)(s,t)+G\mathfrak a(s,t)\check y(s)|_E \\
\leq & \mathfrak{g}_0(K_{\alpha,0}\|\hat\delta_1\check y\|_{\mathscr C^\alpha([0,T];E_\alpha)}+C_{\alpha,0,T}\|\check y\|_{B([0,T];E_\alpha)})|t-s|^\alpha,
\end{align*}
so that $G\check{y}$ belongs to $\mathscr{C}^{\alpha}([0,T]^2_{<};E)$ and its ${\mathscr{C}^\alpha([0,T];E)}$-norm does not exceed $\|G\|_{\mathscr{L}(E)}(K_{\alpha,0}\vee C_{\alpha,0,T})\|y\|_{{\mathscr Y}_{\alpha,\eta}(0,T)}$.

(ii) %In view of Proposition \ref{prop:old_sew_map}, it is enough to prove that for every $y\in\mathscr{Y}_{\alpha,\eta}(0,T)$ the function $Gy$ admits Gubinelli derivative $G\check y\in C^\alpha([0,T];E)$ with remainder $\widetilde R^{GY}\in\mathscr C^{\eta+\alpha}([0,T]^2_<;E)$.
Applying Proposition \ref{prop:old_sew_map}, with $\omega=\lambda=0$, $\zeta=\alpha$ and $\rho=\eta+\alpha$, we conclude that the rough integral $\mathscr I_{Gy}(s,t)$ is well-defined for every $(s,t)\in[0,T]^2_<$ (see formula \eqref{def_int_rough}) and there exists a positive constant $\mathfrak C_1$ such that
\begin{align*}
\|{\mathscr I}_{Gy}-g_{G\check y}\|_{\mathscr C^{2\eta+\alpha}([0,T]^2_<;E)}
\leq \mathfrak C_1(1+[(x,\mathbb X)]_{\mathcal D^{\eta}([0,T])}^2)\|y\|_{{\mathscr Y}_{\alpha,\eta}(0,T)}.
\end{align*}
The proof is complete.
\end{proof}

Fix $X=(x,\mathbb X)\in\mathcal D^\eta([0,T])$ and $\psi\in E_{\eta+\alpha}$ for some $\alpha\in(1-2\eta,\eta)$. Further, let $(x_n,\mathbb X_n)_{n\in\N}$ and $h\in C^{2\eta}([0,T])$ be as in Proposition \ref{conv_rough_paths}, with $\eta'\in (\max\{(\eta+\alpha)/2,1-\alpha-\eta\},\eta)$, which implies that $\eta'>\alpha$.
Applying Theorem \ref{thm:ex_un_mild_solution_rough} and taking Remark \ref{rmk:es_sol_mild_2} into account, we conclude that there exist a unique mild solution $y\in\mathscr Y_{\alpha,\eta}(0,T)$ to the Cauchy problem \eqref{cauchy_prob_rough} 
and, for every $n\in\N$, a mild solution $y_n$ with $Gy_n$ as $SG$-derivative of the Cauchy problem 
\begin{align}
\label{rough_cauchy_probl_n}
\left\{
\begin{array}{ll}
dy(t)=Ay(t)dt+Gy(t)dx_n(t),    &  t\in(0,T], \vspace{1mm} \\
y(0)=\psi,  
\end{array}
\right.
\end{align}
and $y_n$ is given by the formula 
\begin{align}
y_n(t)=&S(t)\psi+S(t)G\psi(x_n(t)-x_n(0))+S(t)G^2\psi\mathbb{X}_n(0,t) \notag \\
&+S(t)G^2\psi(\delta_1h)(0,t)-M_{\delta_{S,2}g_{G^2y_n}}(0,t)
\label{def_y_n_appr}
\end{align}
for every $t\in [0,T]$ (see \eqref{mild_sol_form_espl}). In particular, $y_n$ is as smooth as $y$ is.

\begin{proposition}
\label{prop:conv_sol_reg_sol_rough}
The sequence $(y_n)_{n\in\N}$, defined in \eqref{def_y_n_appr}, converges to $y$ in $\mathscr Y_{\alpha,\eta}(0,T)$. Moreover,
$\mathscr{I}_{Gy_n}$ converges to ${\mathscr I}_{Gy}$ in $\mathscr{C}^{\eta'}([0,T]^2_{<};E)$.
\end{proposition}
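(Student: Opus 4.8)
The plan is to establish a stability estimate for the fixed point of the operator $\Gamma$ with respect to the driving rough path, mimicking the a priori bounds of Proposition \ref{prop:Gamma_mappa_Y_in_Y}. Here I read the convergence ``$y_n\to y$ in $\mathscr Y_{\alpha,\eta}(0,T)$'' as the convergence of the four quantities defining the norm, where the remainder $R^{Y_n}$ of $y_n$ (computed with respect to $x_n$) is compared with the remainder $R^Y$ of $y$ (computed with respect to $x$); since $y_n$ and $y$ are solutions, their SG-derivatives are $Gy_n$ and $Gy$, so the derivative and the increment $\hat\delta_1\check y$ differ only through $G(y_n-y)$ and are controlled by $y_n-y$ via the boundedness of $G$. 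The first step is to record uniform bounds: by Proposition \ref{conv_rough_paths} the sequence $(x_n,\mathbb X_n+\delta_1 h)$ is bounded in $C^{\eta}([0,T])\times\mathscr C^{2\eta}([0,T]^2_<)$, so the constant $\mathfrak R$ in \eqref{stima_a_priori_Y} and the length $\overline T$ in \eqref{choice_ovT} can be chosen independently of $n$; consequently $\sup_n\|y_n\|_{\mathscr Y_{\alpha,\eta}(0,T)}\le\mathfrak R|\psi|_{E_{\eta+\alpha}}$ (the $\mathscr Y$-norm of $y_n$ understood with respect to $x_n$), and the same bound holds for $y$.

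Setting $w_n:=y_n-y$, the core is a local estimate on each subinterval $[a,b]\subset[0,T]$ with $b-a\le\overline T$. By \eqref{spez_int} I would write $y_n=S(\cdot-a)y_n(a)+\mathscr I^{(n)}_{SGy_n}(a,\cdot)$ and $y=S(\cdot-a)y(a)+\mathscr I_{SGy}(a,\cdot)$, where $\mathscr I^{(n)}$ is the convolution integral built from $(x_n,\mathbb X_n+\delta_1 h)$. By Definition \ref{def:def_rough_conv_int} and the linearity of both $\delta_{S,2}$ and the sewing map (Remark \ref{rmk:lin_M}),
\[
\mathscr I^{(n)}_{SGy_n}(s,t)-\mathscr I_{SGy}(s,t)=S(t-s)\Delta g_n(s,t)-M_{\delta_{S,2}\Delta g_n}(s,t),
\]
where $\Delta g_n:=g^{(n)}_{G\check y_n}-g_{G\check y}$ and the two functions are those in \eqref{funct-A} built, respectively, from $(x_n,\mathbb X_n+\delta_1 h,y_n)$ and $(x,\mathbb X,y)$. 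Expanding $\Delta g_n$ and, via \eqref{S_incr_A}, the associated $\delta_{S,2}\Delta g_n$, I would split every product into two kinds of contributions: those in which a \emph{path-increment difference} $(x_n-x)$ or $((\mathbb X_n+\delta_1 h)-\mathbb X)$ multiplies one of the uniformly bounded quantities $Gy_n,G^2y_n,R^{Y_n},\hat\delta_1G\check y_n$, and those in which a \emph{limit increment} $x,\mathbb X$ multiplies a solution difference $Gw_n,G^2w_n,R^{Y_n}-R^Y,\hat\delta_1Gw_n$.

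Feeding this decomposition into \eqref{stima-A} and \eqref{Mg}, and bounding $S(t-s)\Delta g_n$ directly as in the proof of Lemma \ref{lem:reg_int_conv_rough}, I obtain on $[a,b]$ an inequality of the form
\[
\|w_n\|_{*,(a,b)}\le C(b-a)^{\eta-\alpha}\|w_n\|_{*,(a,b)}+C'|w_n(a)|_{E_{\eta+\alpha}}+\varepsilon_n,
\]
where $\|\cdot\|_{*,(a,b)}$ denotes the difference quantity described above, the error $\varepsilon_n\le C''\big(\|x_n-x\|_{C^{\eta'}}+\|(\mathbb X_n+\delta_1 h)-\mathbb X\|_{\mathscr C^{2\eta'}}\big)$ collects all path-difference terms and tends to $0$ by Proposition \ref{conv_rough_paths}, and $C,C',C''$ depend only on the uniform bounds. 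Shrinking $\overline T$ so that $C\overline T^{\eta-\alpha}\le\tfrac12$ absorbs the first term, giving $\|w_n\|_{*,(a,b)}\le 2C'|w_n(a)|_{E_{\eta+\alpha}}+2\varepsilon_n$. Iterating over a fixed finite partition $0=t_0<\dots<t_N=T$ with $t_j-t_{j-1}\le\overline T$, using $w_n(0)=0$ and $|w_n(t_j)|_{E_{\eta+\alpha}}\le\|w_n\|_{*,(t_{j-1},t_j)}$, a finite recursion yields $\|w_n\|_{*,(0,T)}\to0$, i.e. $y_n\to y$ in $\mathscr Y_{\alpha,\eta}(0,T)$. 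For the second assertion I would repeat the argument with the non-convolution sewing map: by Lemma \ref{lem:esistenza_int_rough}, $Gy_n$ and $Gy$ admit Gubinelli derivatives with remainders given by \eqref{gub_deriv_Y}, and $\mathscr I_{Gy_n},\mathscr I_{Gy}$ are defined through Proposition \ref{prop:old_sew_map}; writing their difference via the linearity of $M$ and estimating $\delta_2$ of the corresponding $\Delta g_n$ exactly as above, the solution-difference terms are now controlled by $\|w_n\|_{*,(0,T)}\to0$ (just proved) and the path-difference terms by $\varepsilon_n$, whence $\mathscr I_{Gy_n}\to\mathscr I_{Gy}$ in $\mathscr C^{\eta'}([0,T]^2_<;E)$.

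The main obstacle is precisely this local difference estimate. Because $(x_n,\mathbb X_n+\delta_1 h)$ converges to $(x,\mathbb X)$ only in the \emph{weaker} $\eta'$-topology with $\eta'<\eta$, one must arrange every product so that the finite uniform $\eta$-bounds multiply the genuinely small $\eta'$-differences, and never the reverse; keeping the many terms of $\Delta g_n$ consistent with the H\"older exponents demanded by \eqref{stima-A} and \eqref{Mg}, while guaranteeing that all constants and the contraction length $\overline T$ remain uniform in $n$, is the delicate bookkeeping on which the whole proof rests.
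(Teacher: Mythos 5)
Your proposal follows essentially the same route as the paper's proof: uniform-in-$n$ bounds from Proposition \ref{conv_rough_paths} and \eqref{stima_a_priori_Y}, a local difference estimate obtained by writing the difference of the two convolution integrals as $S(t-s)\Delta g_n - M_{\delta_{S,2}\Delta g_n}$ and pairing uniform $\eta$-bounds with small $\eta'$-differences in the sewing estimates, absorption on intervals of length $\overline T$ followed by iteration from $w_n(0)=0$, and finally the analogous argument with the non-convolution sewing map for $\mathscr I_{Gy_n}\to\mathscr I_{Gy}$. The only point you gloss over, which the paper treats explicitly in its Step 3, is the gluing of the H\"older-type seminorms (of $R^{Y_n}-R^Y$ and $\hat\delta_1 G(y_n-y)$) across partition points via the splitting $(\hat\delta_1 w)(s,t)=(\hat\delta_1 w)(\overline T,t)+S(t-\overline T)(\hat\delta_1 w)(s,\overline T)$, but this is the same bookkeeping already used in the existence proof and does not affect the validity of your argument.
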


\begin{proof}
By Proposition \ref{conv_rough_paths}, we can choose the sequence $(x_n,\mathbb{X}_n+\delta_1h)_{n\in\N}$ to be bounded in $C^{\eta}([0,T])\times\mathscr{C}^{2\eta}([0,T]^2_{<})$. Therefore, in the rest of the proof, we denote by $\kappa$ a positive constant such that
\begin{eqnarray*}
\|y\|_{\mathscr{Y}_{\alpha,\eta}(0,T)}+[x_n]_{C^{\eta}([0,T])}+\|\mathbb{X}_n+\delta_1h\|_{\mathscr{C}^{2\eta}([0,T]^2_<)}\le\kappa,\qquad\;\,n\in\N.
\end{eqnarray*}

We also denote by $\mathfrak{M}_j$ positive constants, which may depend on $T$ and $\kappa$, but are independent of $n$. Finally, to simplify the notation, we find it useful to set, for every $n\in\N$, $h_n=g_{G^2y_n}-g_{G^2y}$, 
\begin{eqnarray*}
\zeta_{n,\tau_1,\tau_2}=
\|y_n-y\|_{B([\tau_1,\tau_2];E_{\eta+\alpha})}+\|R^{Y_n}-R^Y\|_{\mathscr{C}^{\eta+\alpha}([\tau_1,\tau_2]^2_{<};E_{\alpha})}
\end{eqnarray*}
and
\begin{eqnarray*}
\Theta_n=[x-x_n]_{C^{\eta'}([0,T])}+\|\mathbb{X}_n+\delta_1h-\mathbb{X}\|_{{\mathscr C}^{2\eta'}([0,T]^2_{<})}.
\end{eqnarray*}

Since it is rather long, we split the rest of the proof into different steps. In Steps 1 to 3, we prove the convergence of $y_n$ to $y$ in $\mathscr{Y}_{\alpha,\eta}(0,T)$ and in Step 4, we prove the convergence of ${\mathscr I}_{Gy_n}$ to ${\mathscr I}_{Gy}$.

{\it Step $1$}. Here, we prove that there exists $\overline T\leq T\wedge 1$ such that $\zeta_{n,0,\overline{T}}$ converges to zero as $n$ tends to $0$. From Remark \ref{rmk:lin_M}(i) and \eqref{mild_sol_form_espl} we infer that
\begin{align}
y_n(t)-y(t)
%=&S(t)G\psi[(x_n-x)(t)-(x_n-x)(0)] \notag \\
% &+S(t)G^2\psi(\mathbb{X}_n(0,t)+(\delta_1h)(0,t)
% -\mathbb{X}(0,t))-
% M_{\delta_{S,2}g_{G^2y_n}}+
% M_{\delta_{S,2}g_{G^2y}} \notag \\
= & S(t)G\psi((x_n-x)(t)-(x_n-x)(0))\notag\\
&+S(t)G^2\psi\big (\mathbb X_n(0,t)+(\delta_1h)(0,t)-\mathbb{X}(0,t)\big )
-M_{\delta_{S,2}h_n}(0,t)
\label{y_n-y_1}
\end{align}
for every $t\in[0,T]$ and $n\in\N$.

According to \eqref{S_incr_A}, we can split
$\delta_{S,2}h_n=
\mathscr{K}_{1,n}+\mathscr{K}_{2,n}$ for
every $n\in\N$, where
\begin{align*}
&\mathscr{K}_{1,n}(s,t,u)=R^{GY}(s,t)(x(u)-x(t))-R^{GY_n}(s,t)(x_n(u)-x_n(t)),
\\[1mm]
&\mathscr{K}_{2,n}(s,t,u)=(\hat\delta_1G^2y)(s,t)\mathbb{X}(t,u)-(\hat\delta_1G^2y_n)(s,t)(\mathbb{X}_n(t,u)
+(\delta_1h)(t,u))
\end{align*}
for every $(s,t,u)\in [0,\overline T]^3_{<}$.
We estimate separately the functions $\mathscr{K}_{1,n}$ and $\mathscr{K}_{2,n}$.

Taking advantage of \eqref{forma_rem_GY}, we can write
\begin{align*}
\mathscr{K}_{1,n}(s,t,u)=&
R^{GY}(s,t)((x-x_n)(u)-(x-x_n)(t))\\
&+(R^{GY}(s,t)-R^{GY_n}(s,t))(x_n(u)-x_n(t))\\
% =&
% R^{GY}(s,t)((x-x_n)(u)-(x-x_n)(t))\\
% &+\big\{[G,\mathfrak{a}(s,t)]G\big (y(s)(x(t)-x(s))
% -y_n(s)(x_n(t)-x_n(s))\big )\\
% &\qquad+[G,\mathfrak{a}(s,t)](y(s)-y_n(s))+G(R^Y(s,t)-R^{Y_n}(s,t))\big\}\\
% &\qquad\quad\times(x_n(u)-x_n(t))\\
=&R^{GY}(s,t)((x-x_n)(u)-(x-x_n)(t))\\
&+[G,\mathfrak{a}(s,t)]G(y(s)-y_n(s))(x_n(t)-x_n(s))(x_n(u)-x_n(t))\\
&+[G,\mathfrak{a}(s,t)]Gy(s)((x-x_n)(t)-(x-x_n)(s))(x_n(u)-x_n(t))\\
&+[G,\mathfrak{a}(s,t)](y(s)-y_n(s))(x_n(u)-x_n(t))\\
&+G(R^Y(s,t)-R^{Y_n}(s,t))(x_n(u)-x_n(t))
\end{align*}
for every $(s,t,u)\in [0,\overline T]^3_{<}$. Taking also \eqref{stima_S_der_Gy} into account, we can estimate
% \begin{align}
% STIMA CON PARAMETRI LIBERI
% &|\mathscr{K}_{1,n}(s,t,u)|_{E_{\gamma}} \notag \\
% \le & \|R^{GY}\|_{{\mathscr C}^{\beta}([0,T]^2_{<};E_{\gamma})}[x-x_n]_{C^{\eta'}([0,T])}|u-s|^{\eta'+\beta} \notag \\
% &+C_{\delta,\gamma}P_{1,\gamma,\delta}\|G\|_{\mathscr L(E_\delta)}\|y_n-y\|_{B([0,\overline T];E_{\delta})}[x_n]_{C^{\eta}([0,T])}^2|u-s|^{2\eta+\delta-\gamma} \notag \\
% &+C_{\delta,\gamma}P_{1,\gamma,\delta}\|G\|_{\mathscr L(E_\delta)}\|y\|_{B([0,T];E_{\delta})}[x-x_n]_{C^{\eta'}([0,T])}[x_n]_{C^{\eta}([0,T])}|u-s|^{\eta+\eta'+\delta-\gamma} \notag \\
% &+C_{\delta,\gamma}P_{1,\gamma,\delta}\|G\|_{\mathscr L(E_\delta)}\|y_n-y\|_{B([0,\overline T];E_{\delta})}[x_n]_{C^{\eta}([0,T])}|u-s|^{\eta+\delta-\gamma} \notag \\
% &+\|G\|_{\mathscr{L}(E_{\gamma})}\|R^Y-R^{Y_n}\|_{\mathscr{C}^{\rho}([0,\overline T]^2_<;E_{\gamma})}[x_n]_{C^{\eta}([0,T])}|u-s|^{\eta+\rho},
% \label{stima_K_1}
\begin{align*}
&|\mathscr{K}_{1,n}(s,t,u)|_E \notag \\
% \le & 
% \big (C_{\alpha,0}P_{1,0,\alpha}[x]_{C^\eta([0,T])}\|Gy\|_{B([0,T];E_{\alpha})}
% +C_{\eta+\alpha,0}P_{1,0,\eta+\alpha}\|y\|_{B([0,T];E_{\eta+\alpha})}\notag \\
% &\; + K_{\alpha,0}\|G\|_{\mathscr{L}(E)}\|R^Y\|_{\mathscr C^{\eta+\alpha}([0,T]^2_<;E_{\alpha})}\big )
% [x-x_n]_{C^{\eta'}([0,T])}|u-s|^{\eta'+\eta+\alpha} \notag \\
% &+C_{\eta+\alpha,0}P_{2,0,\eta+\alpha}\|y_n-y\|_{B([0,\overline T];E_{\eta+\alpha})}[x_n]_{C^{\eta}([0,T])}^2|u-s|^{3\eta+\alpha} \notag \\
% &+C_{\eta+\alpha,0}P_{2,0,\eta+\alpha}\|y\|_{B([0,T];E_{\eta+\alpha})}[x-x_n]_{C^{\eta'}([0,T])}[x_n]_{C^{\eta}([0,T])}|u-s|^{2\eta+\eta'+\alpha} \notag \\
% &+C_{\eta+\alpha,0}P_{1,0,\eta+\alpha}\|y_n-y\|_{B([0,\overline T];E_{\eta+\alpha})}[x_n]_{C^{\eta}([0,T])}|u-s|^{2\eta+\alpha} \notag \\
% &+\|G\|_{\mathscr{L}(E)}\|R^Y-R^{Y_n}\|_{\mathscr{C}^{\eta+\alpha}([0,\overline T]^2_<;E)}[x_n]_{C^{\eta}([0,T])}|u-s|^{2\eta+\alpha},\\
\le & \kappa
\big (C_{\alpha,0}\mathfrak{g}_{0,\alpha}^2\kappa
+C_{\eta+\alpha,0}\mathfrak{g}_{0,\eta+\alpha}
+ K_{\alpha,0}\|G\|_{\mathscr{L}(E)}\big )
[x-x_n]_{C^{\eta'}([0,T])}|u-s|^{\eta'+\eta+\alpha} \notag \\
&+C_{\eta+\alpha,0}\mathfrak{g}_{0,\eta+\alpha}^2\|y_n-y\|_{B([0,\overline T];E_{\eta+\alpha})}\kappa^2|u-s|^{3\eta+\alpha} \notag \\
&+C_{\eta+\alpha,0}\mathfrak{g}_{0,\eta+\alpha}^2\kappa^2[x-x_n]_{C^{\eta'}([0,T])}|u-s|^{2\eta+\eta'+\alpha} \notag \\
&+C_{\eta+\alpha,0}\mathfrak{g}_{0,\eta+\alpha}\|y_n-y\|_{B([0,\overline T];E_{\eta+\alpha})}\kappa|u-s|^{2\eta+\alpha} \notag \\
&+\mathfrak{g}_0\|R^Y-R^{Y_n}\|_{\mathscr{C}^{\eta+\alpha}([0,\overline T]^2_<;E)}\kappa |u-s|^{2\eta+\alpha},
\end{align*}
so that $\mathscr{K}_{1,n}$ belongs
to $\mathscr{C}^{\eta+\eta'+\alpha}([0,\overline{T}]^3_{<};E)$ and
\begin{align}
\|\mathscr{K}_{1,n}\|_{\mathscr{C}^{\eta+\eta'+\alpha}([0,\overline{T}]^3_{<};E)}
\le\mathfrak{M}_0\big ([x-x_n]_{C^{\eta'}([0,T])}+\zeta_{n,0,\overline{T}}\big ).
\label{stima_K_1}
\end{align}

% so that
% \begin{align}
% &\|\mathscr{K}_{1,n}\|_{\mathscr{C}^{2\eta'+\alpha}([0,T]^3_{<};E_{\alpha})}\notag\\
% \le &
% \|R^{GY}\|_{\mathscr{C}^{\eta'+\alpha}([0,T]^2_{<};E)}[x_n-x]_{C^{\eta'}([0,T])}\notag\\
% &+C_{\alpha,0}(\|G\|_{\mathscr{L}(E)}
% +\|G\|_{\mathscr{L}(E_{\alpha})})\|G\|_{\mathscr{L}(E_{\alpha})}[x_n]_{C^{\eta'}([0,T])}\notag\\
% &\qquad\;\,\times 
% \big ([x_n]_{C^{\eta'}([0,T])}\|y_n-y\|_{B([0,T];E_{\alpha})}+[x_n-x]_{C^{\eta'}([0,T])}\|y\|_{B([0,T];E_{\alpha})}\big )\notag\\
% &+C_{\eta'+\alpha,0}(\|G\|_{\mathscr{L}(E)}
% +\|G\|_{\mathscr{L}(E_{\eta'+\alpha})})\|y-y_n\|_{B([a,b];E_{\eta'+\alpha})}[x_n]_{C^{\eta'}([0,T])}\notag\\
% &+K_{\alpha,0}\|G\|_{\mathscr{L}(E)}\|R^{Y}-R^{Y_n}\|_{\mathscr{C}^{\eta'+\alpha}([0,T]^2;E_{\alpha})}[x_n]_{C^{\eta'}([0,T])}
% \label{gente}
% \end{align}

Similarly, for every $(s,t,u)\in[0,\overline T]^3_<$ we can estimate
% \begin{align*}
% STIMA CON TUTTI I PARAMETRI
% |\mathscr{K}_{2,n}(s,t,u)|_{E_{\gamma}}\le &
% |(\hat\delta_1G^2y)(s,t)-(\hat\delta_1G^2y_n)(s,t)|_{E_{\gamma}}\|\mathbb{X}_n+\delta_1h\|_{\mathscr C^{2\eta}([0,T]^2_{<})}|u-t|^{2\eta}\\
% &+|(\hat\delta_1G^2y)(s,t)|_{E_{\gamma}}\|\mathbb{X}_n+\delta_1h-\mathbb{X}\|_{\mathscr C^{2\eta'}([0,T]^2_{<})}|u-t|^{2\eta'}.
% \end{align*}
\begin{align*}
|\mathscr{K}_{2,n}(s,t,u)|_{E}\le &
|(\hat\delta_1G^2y)(s,t)|_{E}\|\mathbb{X}_n+\delta_1h-\mathbb{X}\|_{\mathscr C^{2\eta'}([0,T]^2_{<})}|u-t|^{2\eta'}\\
&+|(\hat\delta_1G^2y)(s,t)-(\hat\delta_1G^2y_n)(s,t)|_{E}\|\mathbb{X}_n+\delta_1h\|_{\mathscr C^{2\eta}([0,T]^2_{<})}|u-t|^{2\eta}.
\end{align*}
% \begin{align}
% &\|\mathscr{K}_{2,n}\|_{\mathscr{C}^{2\eta'+\alpha}([0,T]^3_{<};E_{\alpha})}\notag\\
% \le & \|\hat\delta_1G^2y-\hat\delta_1G^2y_n\|_{\mathscr{C}^{\alpha}([0,T]^2_{<};E)}(\|\mathbb{X}_n\|_{\mathscr{C}^{2\eta'}([0,T]^2_{<};E)}+[h]_{C^{2\eta}([0,T])})\notag\\
% &+\|\hat\delta_1G^2y\|_{\mathscr{C}^{\alpha}([0,T]^2_{<};E)}\|\mathbb{X}-\mathbb{X}_n-\delta_1h\|_{\mathscr{C}^{2\eta'}([0,T]^2_{<};E)}
% \label{di}
% \end{align}

Since $(\hat\delta_1G^2y)(s,t)=G^2(\hat\delta_1y)(s,t)+[G^2,\mathfrak{a}(s,t)]y(s)$ for every $(s,t)\in[0,\overline T]^2_<$, 
taking Remark \ref{rmk:reg_hat_delta_1_y} into account, which shows that $\hat\delta_1y\in\mathscr{C}^{\eta}([0,T]^2_{<};E_{\alpha})$ and $\|\hat\delta_
1y\|_{\mathscr{C}^{\eta}([0,T]^2_{<};E_{\alpha})}\le (L_{\alpha,\alpha}[x]_{C^{\eta}([0,T])}+T^{\alpha})\|y\|_{{\mathscr Y}_{\alpha,\eta}(0,T)}$,  we obtain
\begin{align*}
|(\hat\delta_1G^2y)(s,t)|_E\le &K_{\alpha,0}\mathfrak{g}_{\alpha}^2
(L_{\alpha,\alpha}[x]_{C^{\eta}([0,T])}+T^{\alpha})\|y\|_{{\mathscr Y}_{\alpha,\eta}(0,T)}|t-s|^{\eta}\notag\\
&+C_{\eta+\alpha,0}\mathfrak{g}_{0,\eta+\alpha}^2\|y\|_{B([0,T];E_{\eta+\alpha})}|t-s|^{\eta+\alpha}.
\end{align*}

Similarly, using the above expression on $\hat\delta_1G^2y$ and the definition of SG-derivative, we can split
\begin{align*}
&(\hat\delta_1G^2y)(s,t)-(\hat\delta_1G^2y_n)(s,t)\notag\\
% =&G^2(\hat\delta_1y)(s,t)-G^2(\hat\delta_1y_n)(s,t)
% +[G^2,\mathfrak a(s,t)](y(s)-y_n(s))
% \\
% =&G^2S(t-s)[Gy(s)(x(t)-x(s))-
% Gy_n(s)(x_n(t)-x_n(s))]\\
% &+G^2R^Y(s,t)-G^2R^{Y_n}(s,t)
% +[G^2,\mathfrak a(s,t)](y(s)-y_n(s))\\
=&G^2S(t-s)[Gy(s)((x-x_n)(t)-(x-x_n)(s))-
G(y_n(s)-y(s))(x_n(t)-x_n(s))]\notag\\
&+G^2R^Y(s,t)-G^2R^{Y_n}(s,t)
+[G^2,\mathfrak a(s,t)](y(s)-y_n(s))
\end{align*}
for every $(s,t)\in[0,\overline T]^2_<$, and estimate
% From \eqref{stima_hatdelta_1_Gy}, with $G$ being replaced by $G^2$, it follows that
% \begin{align}
% \|\hat\delta_1G^2y\|_{\mathscr{C}^{\alpha}([0,T]^2_{<};E_{\alpha})}
% \le &\|G^2\|_{\mathscr{L}(E)}L_{0,0}\|G\|_{\mathscr{L}(E)}\|y\|_{B([0,T];E)}[x]_{C^{\eta'}([0,T])}T^{\eta'-\alpha}\notag\\
% &+\|G^2\|_{\mathscr{L}(E)}\|R^Y\|_{\mathscr{C}^{\eta'+\alpha}([0,T]^2_{<};E)}T^{\eta'}\notag\\
% &+(\|G^2\|_{\mathscr{L}(E)}+\|G^2\|_{\mathscr{L}(E_{\eta'+\alpha})})C_{\eta'+\alpha,0}\|y\|_{B([0,T];E_{\eta'+\alpha})}T^{\eta'-\alpha}
% \label{mare}
% \end{align}
% \begin{align*}
% STIMA CON TUTTI I PARAMETRI
% &|(\hat\delta_1G^2y)(s,t)-(\hat\delta_1G^2y_n)(s,t)|_{E_{\gamma}}\\
% \le & K_{\delta,\gamma}\|G\|_{\mathscr{L}(E_{\gamma})}^3
% L_{\gamma,\gamma}\|y\|_{B([0,T];E_{\delta})}
% [x_n-x]_{C^{\eta'}([0,T])}|u-s|^{\eta'}\\
% &+K_{\delta,\gamma}\|G\|_{\mathscr{L}(E_{\gamma})}^3
% L_{\gamma,\gamma}\|y_n-y\|_{B([0,\overline T];E_{\delta})}
% [x_n]_{C^{\eta}([0,T])}|u-s|^{\eta}\\
% &+\|G\|_{\mathscr{L}(E_{\gamma})}^2\|R^Y-R^{Y_n}\|_{\mathscr{C}^{\rho}([0,\overline T]^2_{<};E_{\gamma})}|u-s|^{\rho}+C_{\delta,\gamma}P_{2,\gamma,\delta}\|y_n-y\|_{B([0,\overline T];E_{\delta})}|u-s|^{\delta-\gamma}
% \end{align*}
\begin{align*}
&|(\hat\delta_1G^2y)(s,t)-(\hat\delta_1G^2y_n)(s,t)|_E\\
\le & K_{\eta+\alpha,0}\mathfrak{g}_0^3
L_{0,0}\|y\|_{B([0,T];E_{\eta+\alpha})}
[x_n-x]_{C^{\eta'}([0,T])}|u-s|^{\eta'}\\
&+K_{\eta+\alpha,0}\mathfrak{g}_0^3
L_{0,0}\|y_n-y\|_{B([0,\overline T];E_{\eta+\alpha})}
[x_n]_{C^{\eta}([0,T])}|u-s|^{\eta}\\
&+\mathfrak{g}_0^2\|R^Y-R^{Y_n}\|_{\mathscr{C}^{\eta+\alpha}([0,\overline T]^2_{<};E)}|u-s|^{\eta+\alpha}\\
&+C_{\eta+\alpha,0}\mathfrak{g}_{0,\eta+\alpha}^2\|y_n-y\|_{B([0,\overline T];E_{\eta+\alpha})}|u-s|^{\eta+\alpha}
\end{align*}
for every $(s,t)\in[0,\overline{T}]^2_<$ and $n\in\N$. Hence, $\mathscr{K}_{2,n}\in\mathscr{C}^{2\eta'+\eta}([0,\overline{T}]^3_{<};E)$ and
\begin{align}
\|\mathscr{K}_{2,n}\|_{\mathscr{C}^{2\eta'+\eta}([0,\overline{T}]^3_{<};E)}
\le
\mathfrak{M}_1\big (\Theta_n+\zeta_{n,0,\overline{T}}\big ).
\label{stima_K_2}
\end{align}

Recalling that
$\eta'\in (\max\{(\eta+\alpha)/2,1-\alpha-\eta\},\eta)$ and $\|\cdot\|_{\mathscr{C}^{\eta+\eta'+\alpha}([0,T]^3_{<};E)}\le T^{\eta'-\alpha}\|\cdot\|_{\mathscr{C}^{2\eta'+\eta}([0,T]^3_{<};E)}$, from \eqref{stima_K_1} and \eqref{stima_K_2} it follows that
% \begin{align*}
% &\|\delta_{S,2}h_n\|_{\mathscr{C}^{\eta+\eta'+\alpha}([0,T]^3_{<};E)} \\
% \leq & \mathfrak M_1\Big [
% (\|R^{GY}\|_{\mathscr C^{\eta+\alpha}([0,T]^2_<;E)}+[x_n]_{C^{\eta}([0,T])}\|y\|_{B([0,T];E_{\eta+\alpha})})
% [x_n-x]_{C^{\eta'}([0,T])}\\
% &\qquad +(1+[x_n]^2_{C^\eta([0,T])})(\|y_n-y\|_{B([0,\overline T];E_{\eta+\alpha})}+\|R^Y-R^{Y_n}\|_{\mathscr C^{\eta+\alpha}([0,\overline T]^2_<;E_\alpha)}) \\
% &\qquad + (\|\hat\delta_1y\|_{\mathscr C^{\eta}([0,T]^2_<;E_\alpha)}+\|y\|_{B([0,T];E_{\eta+\alpha})})\|\mathbb X_n+\delta_1h-\mathbb X\|_{\mathscr C^{2\eta'}([0,T]^2_<)}  \\
% &\qquad +  \|\mathbb X_n+\delta_1h\|_{\mathscr C^{2\eta}([0,T]^2_<)}(\|y\|_{B([0,T];E_{\eta+\alpha})}[x_n-x]_{C^{\eta'}([0,T])} \\
% &\qquad +(1+[x_n]_{C^{\eta}([0,T])})\|y_n-y\|_{B([0,\overline T];E_{\eta+\alpha})}+\|R^Y-R^{Y_n}\|_{\mathscr C^{\eta+\alpha}([0,\overline T]^2_<;E_\alpha)})\Big ]
% \end{align*}
\begin{align}
\|\delta_{S,2}h_n\|_{\mathscr{C}^{\eta+\eta'+\alpha}([0,T]^3_{<};E)}\leq \mathfrak M_2\big (
\Theta_n+\zeta_{n,0,\overline T}\big )
\label{stima_delta_2_h_n-0}
\end{align}
for every $n\in\N$. 
From estimates \eqref{Mg}, with $g=\delta_{S,2}h_n$, $\mu=\eta+\eta'+\alpha$, $\beta=0$, $a=0$ and $b=\overline T$, and \eqref{stima_delta_2_h_n-0}, we infer that for every $\varepsilon\in[0,1)$ there exists a positive constant $C$, depending on $T$ and $\varepsilon$, such that
\begin{align}
\|M_{\delta_{S,2}h_n}\|_{\mathscr C^{\eta+\eta'+\alpha-\varepsilon}([0,\overline T]^2_<;E_{\varepsilon})}
\leq C \mathfrak M_2\big(
\Theta_n+\zeta_{n,0,\overline T}\big)
\label{stima_M_delta_2_h_n}
\end{align}
for every $n\in\N$. Formula \eqref{y_n-y_1} and estimate \eqref{stima_M_delta_2_h_n}, with $\varepsilon=\eta+\alpha$, give 
\begin{align}
\|y_n-y\|_{B([0,\overline T];E_{\eta+\alpha})}
\leq & \mathfrak M_3\big(\Theta_n
+\|R^{Y}-R^{Y_n}\|_{\mathscr C^{\eta+\alpha}([0,\overline T]^2_<;E_{\alpha})}\big)\notag\\
&+\mathfrak M_3\overline T^{\eta'}\|y_n-y\|_{B([0,\overline T];E_{\eta+\alpha})}
\label{stima_dif_y_n-y_norma_eta+alpha},
\end{align}
where we have estimated $|\psi|_{E_{\eta+\alpha}}$
from above by $\|y\|_{B([0,T];E_{\eta+\alpha})}$ to obtain a constant $\mathfrak{M}_3$, which is independent of $\psi$.

Now we estimate $\|R^{Y}-R^{Y_n}\|_{\mathscr C^{\eta+\alpha}([0,\overline T]^2_<;E_{\alpha})}$. Taking \eqref{rem-gamma} into account, we can write
% \begin{align*}
% & R^{Y_n}(s,t)=S(t-s)G^2 y_n(s)(\mathbb X_n(s,t)+(\delta_1h)(s,t))-M_{\delta_{S,2}g_{G^2y_n}}(s,t), \\
% & R^{Y}(s,t)=S(t-s)G^2 y(s)\mathbb X(s,t)-M_{\delta_{S,2}g_{G^2y}}(s,t)
% \end{align*}
% for every $n\in\N$ and $(s,t)\in[0,\overline T]^2_<$. Hence,
\begin{align*}
R^{Y}(s,t)-R^{Y_n}(s,t)
= & S(t-s)G^2y(s)(\mathbb X(s,t)-\mathbb X_n(s,t)-(\delta_1h)(s,t)) \\
&+ S(t-s)G^2(y(s)-y_n(s))(\mathbb X_n(s,t)+(\delta_1h)(s,t))\\
&+M_{\delta_{S,2}h_n}(s,t)
\end{align*}
for every $n\in\N$ and $(s,t)\in[0,\overline T]^2_<$. From \eqref{stima_M_delta_2_h_n}, with $\varepsilon=\alpha$, 
we infer that
\begin{align*}
& |R^{Y_n}(s,t)-R^{Y}(s,t)|_{E_{\alpha}} \\
\leq & L_{\alpha,\alpha}\|G^2\|_{\mathscr L(E_\alpha)}K_{\eta+\alpha,\alpha}\|y\|_{B([0,T];E_{\eta+\alpha})}\|\mathbb X_n+\delta_1h-\mathbb X\|_{\mathscr C^{2\eta'}([0,T]^2_<)}|t-s|^{2\eta'} \\
& + L_{\alpha,\alpha}\|G^2\|_{\mathscr L(E_\alpha)}K_{\eta+\alpha,\alpha}\|y_n-y\|_{B([0,\overline T];E_{\eta+\alpha})}\|\mathbb X_n+\delta_1h\|_{\mathscr C^{2\eta}([0,T]^2_<)}|t-s|^{2\eta} \\
& + \|M_{\delta_{S,2}h_n}\|_{\mathscr C^{\eta+\eta'}([0,\overline T]^2_<;E_{\alpha})}|t-s|^{\eta+\eta'}
%\\
% \leq & \mathfrak M_4 \big([x_n-x]_{C^{\eta'}([0,T])}+\|\mathbb X_n+\delta_1h-\mathbb X\|_{\mathscr C^{2\eta'}([0,T]^2_<)} \\
% & \qquad+(\|y_n-y\|_{B([0,\overline T];E_{\eta+\alpha})}+\|R^{Y}-R^{Y_n}\|_{\mathscr C^{\eta+\alpha}([0,\overline T]^2_<;E_{\alpha})})\overline T^{\eta'-\alpha}\big)|t-s|^{\eta+\alpha}
\end{align*}
for every $n\in\N$ and $(s,t)\in[0,\overline T]^2_<$. Recalling that $\eta'>(\eta+\alpha)/2>\alpha$, we conclude that
\begin{align}
\|R^{Y_n}-R^{Y}\|_{\mathscr C^{\eta+\alpha}([0,T]^2_<;E_{\alpha})}
\leq & \mathfrak M_4\Theta_n+\mathfrak M_4\overline T^{\eta'-\alpha}\zeta_{n,0,\overline{T}}
\label{stima_R_Y-R_Yn}
\end{align}
for every $n\in\N$.
From \eqref{stima_dif_y_n-y_norma_eta+alpha} and \eqref{stima_R_Y-R_Yn}, and recalling that $\overline T\leq 1$, we infer that, if $2[\mathfrak M_3(\mathfrak{M}_4\overline{T}^{\eta'-\alpha}+\overline{T}^{\eta'})+\mathfrak M_4\overline T^{\eta'-\alpha}]\leq 1$, then for every $n\in\N$ we get
\begin{align*}
& \zeta_{n,0,\overline T}\leq 2(\mathfrak M_3+\mathfrak M_4+\mathfrak M_3\mathfrak M_4)\Theta_n.
\end{align*}
Letting $n$ tend to infinity, we conclude that $\zeta_{n,0,\overline{T}}$ converges to $0$ as $n$ tends to $\infty$.    

{\em Step 2}. Here, based on Step 1, we prove that $y_n$ converges to $y$ in $\mathscr{Y}_{\alpha,\eta}(0,\overline{T})$. For this purpose, we begin by observing that, since $\check y_n=Gy_n$ and $\check y=Gy$, from Step 1 it follows that
$\check y_n$ converges to $\check y$ in $B([0,\overline{T}];E_{\eta+\alpha})$ (and, consequently, in $B([0,\overline{T}];E_{\alpha})$) as $n$ tends to $\infty$.

As far as the difference $\hat\delta_1\check y_n-\hat\delta_1\check y$ is concerned, observing that
\begin{align*}
(\hat\delta_1y)(s,t)=&\mathscr I_{SGy}(s,t)\\
=&S(t-s)Gy(s)(x(t)-x(s))+S(t-s)G^2y(s)\mathbb{X}(s,t)
-M_{\delta_{S.2}g}(s,t)
\end{align*}
for every $(s,t)\in [0,T]^2_{<}$
(and a similar formula holds true for $y_n$, just replacing $x$ and $\mathbb{X}$ with $x_n$ and $\mathbb{X}_n+\delta_1h$, respectively), we get
\begin{align*}
&(\hat\delta_1\check y_n)(s,t)
-(\hat\delta_1\check y)(s,t)\\
= & G(\hat\delta_1y_n)(s,t)+[G,\mathfrak a(s,t)]y_n(s)-G(\hat\delta_1y)(s,t)-[G,\mathfrak a(s,t)]y(s) \\
% = & G\big(S(t-s)Gy_n(s)(x_n(t)-x_n(s))
% +S(t-s)G^2y_n(s)\big (\mathbb X_n(s,t)+(\delta_1h)(s,t)\big )\\
% &-G\big(S(t-s)Gy(s)(x(t)-x(s))
% +S(t-s)G^2y(s)\mathbb X(s,t)\big )\\
% &-GM_{\delta_{S,2}h_n}(s,t)+ [G,\mathfrak a(s,t)](y_n-y)(s)\\
=&GS(t-s)(Gy_n(s)-Gy(s))(x_n(t)-x_n(s))+[G,\mathfrak{a}(s,t)](y_n-y)(s)  
\\
&+GS(t-s)Gy(s)((x_n-x)(t)-(x_n-x)(s))\\
&+GS(t-s)G^2(y_n(s)-y(s))(\mathbb{X}_n(s,t)+(\delta_1h)(s,t))\\
&+GS(t-s)G^2y(s)(\mathbb{X}_n(s,t)+(\delta_1h)(s,t)-\mathbb{X}(s,t))-
GM_{\delta_{S,2}h_n(s,t)}
\end{align*}
for every $(s,t)\in[0,\overline T]^2_<$ and $n\in\N$. From \eqref{stima_M_delta_2_h_n} with $\eta=\alpha$ it follows that $\hat\delta_1\check{y}_n-\hat\delta_1\check{y}$ belongs to $\mathscr{C}^{\alpha}([0,\overline{T}]^2_{<};E_{\alpha})$ and
% \begin{align*}
% & |(\hat\delta_1\check y_n)(s,t)
% -(\hat\delta_1\check y)(s,t)|_{E_\alpha} \\
% \leq & \|G\|_{\mathscr L(E_\alpha)}^2L_{\alpha,\alpha}K_{\eta+\alpha,\alpha}
% \|y_n-y\|_{B([0,\overline{T}];E_{\eta+\alpha})}[x_n]_{C^{\eta}([0,T])}|t-s|^{\eta}\\
% &+\|G\|_{\mathscr L(E_\alpha)}^2L_{\alpha,\alpha}K_{\eta+\alpha,\alpha}
% \|y\|_{B([0,\overline{T}];E_{\eta+\alpha})}[x_n-x]_{C^{\eta'}([0,T])}|t-s|^{\eta'}\\
% &+\|G\|_{\mathscr L(E_\alpha)}^3L_{\alpha,\alpha}K_{\eta+\alpha,\alpha}
% \|y_n-y\|_{B([0,\overline{T}];E_{\eta+\alpha})}\|\mathbb{X}_n+\delta_1h\|_{\mathscr{C}^{2\eta}([0,T]^2_{<})}|t-s|^{2\eta}\\
% &+\|G\|_{\mathscr L(E_\alpha)}^3L_{\alpha,\alpha}K_{\eta+\alpha,\alpha}
% \|y\|_{B([0,\overline{T}];E_{\eta+\alpha})}\|\mathbb{X}_n+\delta_1h-\mathbb{X}\|_{\mathscr{C}^{2\eta'}([0,T]^2_{<})}|t-s|^{2\eta'}\\
% & +\|G\|_{\mathscr{L}(E_{\alpha})}\|M_{\delta_{S,2}h_n}\|_{\mathscr C^{\eta+\eta'}([0,\overline T]^2_<;E_\alpha)}|t-s|^{\eta+\eta'} \\
% & + C_{\eta+\alpha,\alpha}P_{1,\alpha+\eta,\alpha}\|y_n-y\|_{B([0,\overline T];E_{\eta+\alpha})}|t-s|^{\eta}
% \end{align*}
% for every $(s,t)\in[0,\overline T]^2_<$.
% Hence, recalling that $\eta'>\alpha$, we get
\begin{align}
\|\hat\delta_1\check{y}_n-\hat\delta_1\check{y}\|_{\mathscr C^{\alpha}([0,\overline T]^2_{<};E_{\alpha})}\leq \mathfrak M_5
(\Theta_n+\zeta_{n,0,\overline{T}}\big),\qquad\;\,n\in\N.
\label{stima_diff_Gy_n-Gy}
\end{align}
From Step 1 and \eqref{stima_diff_Gy_n-Gy}, we conclude that
$y_n-y$ vanishes in ${\mathscr Y_{\alpha,\eta}(0,\overline T)}$ as $n$ tends to $\infty$. 

{\it Step 3}. If $\overline T= T$ then we are done. Otherwise, we consider the interval $[\overline T,T_1]$, where $T_1=\min\{T,2\overline T\}$
and, taking \eqref{def_int_conv_rough} into account, we write
\begin{align*}
y_n-y=&S(\cdot-\overline{T})(y_n(\overline{T})-y(\overline T))+S(\cdot-s)(Gy_n(\overline{T})-Gy(\overline{T}))(x_n-x_n(\overline{T}))\\
&+S(\cdot-s)Gy(\overline{T})(x_n-x-(x_n-x)(\overline{T}))\\
&+S(\cdot-s)(G^2y_n(\overline{T})-G^2y(\overline{T}))(\mathbb{X}_n(\overline{T},\cdot)+\delta_1h(\overline{T},\cdot))\\
&+S(\cdot-s)G^2y(\overline{T})(\mathbb{X}_n(\overline{T},\cdot)+\delta_1h(\overline{T},\cdot)-\mathbb{X}(\overline{T},\cdot))
-M_{\delta_{S,2}h_n}(\overline{T},t)=:\sum_{j=1}^6I_j
\end{align*}
for every $t\in [\overline{T},T_1]$ and every $n\in\N$.

The sum $I_3+I_5+I_6$ can be estimated arguing as in Step 1 and we can show that
\begin{align*}
\|I_3+I_5+I_6\|_{B([\overline{T},T_1];E_{\eta+\alpha})}
\le \mathfrak{M}_3(1+\mathfrak{M}_4)\Theta_n+\mathfrak{M}_3(\mathfrak{M}_4\overline{T}^{\eta'-\alpha}+\overline{T}^{\eta'})\zeta_{n,\overline T,T_1}
\end{align*}
and
\begin{align*}
\|R^{Y_n}-R^{Y}\|_{\mathscr C^{\eta+\alpha}([\overline{T},T_1]^2_<;E_{\alpha})}
\leq & \mathfrak M_4\Theta_n+\mathfrak M_4\overline T^{\eta'-\alpha}\zeta_{n,\overline{T},T_1}
\end{align*}
(see \eqref{stima_dif_y_n-y_norma_eta+alpha}
and \eqref{stima_R_Y-R_Yn}).

On the other hand, the sum $I_1+I_2+I_4$ is immediate to estimate and we find out that
\begin{align*}
\|I_1+I_2+I_4\|_{B([\overline{T},T_1];E_{\eta+\alpha})}
\le \mathfrak{M}_6\|y_n-y\|_{B([0,\overline T];E_{\eta+\alpha})}.
\end{align*}

Combining these last three estimates we can infer that
\begin{eqnarray*}
\zeta_{n,\overline{T},T_1}\le 2(\mathfrak{M}_3+\mathfrak{M}_4+\mathfrak{M}_3\mathfrak{M}_4)\Theta_n+\mathfrak{M}_6\zeta_{n,0,\overline{T}},\qquad\;\,n\in\N,
\end{eqnarray*}
and from Step 1,  we conclude that $\zeta_{n,\overline{T},T_1}$ vanishes as $n$ tends to $\infty$. As a byproduct, we obtain that
$y_n$ converges to $y$ in $B([0,T_1];E_{\eta+\alpha})$. 

To show that $\zeta_{n,0,T_1}$ vanishes as $n$ tends to $ \infty$, we need to prove that $R^{Y_n}$ converges to $R^Y$ in $\mathscr{C}^{\eta+\alpha}([0,T_1]^2_{<};E_{\alpha})$. 
For this purpose, we observe that
\begin{align*}
R^{Y}(s,t)-R^{Y}(\overline{T},t)=&(S(t-\overline{T})y(\overline{T})-S(t-s)y(s))-S(t-s)Gy(s)(x(\overline{T})-x(s))\\
&+(S(t-\overline T)Gy(\overline T)-S(t-s)Gy(s))
(x(t)-x(\overline T))\\
=&S(t-\overline{T})(\hat\delta_1y)(s,\overline{T})-S(t-s)Gy(s)(x(\overline{T})-x(s))\\
&+S(t-\overline{T})(\hat\delta_1Gy)(s,\overline{T})(x(t)-x(\overline{T}))\\
=&S(t-\overline{T})R^{Y}(s,\overline{T})+S(t-\overline{T})(\hat\delta_1Gy)(s,\overline{T})(x(t)-x(\overline{T}))
\end{align*}
for every $(s,t)\in [0,T_1]$, with $s<\overline T<t$ and $n\in\N$,
and the same formula holds true with $(y,Y,x)$ being replaced by $(y_n,Y_n,x_n)$.
Since $R^{Y_n}$ converges to $R^{Y}$ in $\mathscr C^{\eta+\alpha}([0,\overline{T}]^2_<;E_{\alpha})$ and in
$\mathscr C^{\eta+\alpha}([\overline{T},T_1]^2_<;E_{\alpha})$, we can adapt the argument in the second part of Step 1 of the proof of Theorem \ref{thm:ex_un_mild_solution_rough} to show that 
\begin{align*}
\|R^{Y_n}-R^Y\|_{\mathscr{C}^{\eta+\alpha}([0,T_1]^2_{<};E_{\alpha})}\le \mathfrak{M}_7(\Theta_n+
\zeta_{n,0,\overline{T}}+\zeta_{n,\overline{T},T_1}), \qquad n\in\N, 
\end{align*}
from which the claimed convergence of $R^{Y_n}$ to $R^Y$ follows at once.

We have so proved that 
$\zeta_{n,0,T_1}$ vanishes as $n$ tends to $\infty$.
Now, arguing as in Step 2, we infer that $y_n$ converges to $y$
in ${\mathscr{Y}_{\alpha,\eta}(0,T_1)}$ as $n$ tends to $\infty$. If $T_1=T$ then we are done, otherwise we split $[0,T]$ into intervals of the form $[(j-1)\overline T,j\overline T]$ and iterate the above arguments. In a finite number of steps we complete the proof of the convergence of $y_n$ to $y$ in $\mathscr{Y}_{\alpha,\eta}(0,T)$.

{\em Step 4}. From Lemma \ref{lem:esistenza_int_rough} we already know that $\mathscr I_{Gy}$ and $\mathscr I_{Gy_n}$ ($n\in\N$) are well-defined and, for every $n\in\N$ and $(s,t)\in[0,T]^2_<$, we get
% \begin{align*}
% \mathscr I_{Gy_n}(s,t)
% \! = \! Gy_n(s)(x_n(t)-x_n(s))+G^2y_n(s)(\mathbb X_n(s,t)\!+\!(\delta_1h)(s,t))-M_{\delta_2g_{G^2y_n}}(s,t).
% \end{align*}
% It follows that, for every $(s,t)\in[0,T]^2_<$,
\begin{align}
& \mathscr I_{Gy_n}(s,t)
-\mathscr I_{Gy}(s,t) \notag \\
=& G(y_n(s)-y(s))(x_n(t)-x_n(s))+Gy(s)((x_n-x)(t)-(x_n-x)(s)) \notag \\
& + G^2(y_n(s)-y(s))(\mathbb X_n(s,t)+(\delta_1h)(s,t))\notag \\
&+G^2y(s)(\mathbb X_n(s,t)+(\delta_1h)(s,t)-\mathbb X(s,t))  -M_{\delta_2h_n}(s,t).
\label{diff_int_1}
\end{align}
From the definition of $h_n$, it follows that
$\delta_2h_n
=\widetilde{\mathscr K}_{1,n}+\widetilde{\mathscr K}_{2,n}$, where the terms 
$\widetilde{\mathscr K}_{1,n}$ and $\widetilde{\mathscr K}_{2,n}$ are defined as
the corresponding terms $\mathscr{K}_{1,n}$ and $\mathscr{K}_{2,n}$ in Step 1, with $[G,\mathfrak{a}]$, $R^{GY}$ and $\hat\delta_1$ being everywhere replaced by $G\mathfrak{a}$, $\widetilde R^{GY}$ (see \eqref{gub_deriv_Y}) and  $\delta_1$, respectively. Note that 
$G\mathfrak{a}$ has the same smoothing properties as the commutator $[G,\mathfrak{a}]$ and 
from Remark \ref{rmk:reg_hat_delta_1_y}
we can infer that $\delta_1y\in \mathscr{C}^{\eta}([0,T]^2_{<};E)$ and its $\mathscr{C}^{\eta}([0,T]^2_{<};E)$-norm can be bounded from above by
$K_{\alpha,0}\kappa(1+C_{\eta+\alpha,\eta})$. Hence, taking also \eqref{intro-mario} into account and arguing as in Step 1, we can easily show that
\begin{align}
\|\delta_2h_n\|_{\mathscr C^{\eta+\eta'+\alpha}([0,T]^3_<;E)}
\leq & \mathfrak M_8\big(\Theta_n+\zeta_{n,0,T}\big ),\qquad\;\,n\in\N.
\label{stima_delta_2_h_n}
\end{align}

From Remark \ref{rmk:lin_M}(ii), with $S(t)=I$ for every $t\geq0$, $g=\delta_2h_n$, $\mu=\eta+\eta'+\alpha$, $\beta=0$, $a=0$ and $b=T$, and \eqref{stima_delta_2_h_n}, we infer that there exists a positive constant $C$ such that, for every $n\in\N$,
\begin{align}
\|M_{\delta_2h_n}\|_{\mathscr C^{\eta+\eta'+\alpha}([0,T]^2_<;E)}
\leq & \mathfrak M_9\big(\Theta_n+\zeta_{n,0,T}\big ),\qquad\;\,n\in\N.
\label{stima_int_M_delta_2_h_n}
\end{align}
From \eqref{diff_int_1} and \eqref{stima_int_M_delta_2_h_n} it is immediate to conclude that
\begin{align}
\|\mathscr I_{Gy_n}-\mathscr I_{Gy}\|_{\mathscr C^{\eta'}([0,T]^2_<;E)}
% \leq & K_{\eta+\alpha,0}\|G\|_{\mathscr L(E)}\kappa\big(\|y_n-y\|_{B([0,T];E_{\eta+\alpha})}T^{\eta-\eta'} +[x_n-x]_{C^{\eta'}([0,T])}\big) \notag \\
% & + K_{\eta+\alpha,0}\|G\|_{\mathscr L(E)}^2\big(\kappa\|y_n-y\|_{B([0,T];E_{\eta+\alpha})}T^{2\eta-\eta'}+\kappa^2T^{\eta'}\big)  \notag \\
% & + \|M_{\delta_2h_n}\|_{\mathscr C^{\eta+\eta'+\alpha}([0,T]^2_<;E)}T^{\eta+\alpha} \notag \\
\leq & \mathfrak M_{10}\big(\Theta_n+\zeta_{n,0,T}\big),\qquad\;\,n\in\N.
\label{stima_diff_int_finale}
\end{align}
Letting $n$ tend to infinity in \eqref{stima_diff_int_finale}, we complete the proof.
\end{proof}

To prove the next result, we assume an additional condition, which is clearly satisfied if $(S(t))_{t\ge 0}$ is a strongly continuous or an analytic semigroup. 
\begin{hypothesis}
For every $t\geq0$ and $x\in E$, $\displaystyle \int_0^t S(r)xdr$ belongs to $D(A)$ and 
\begin{align}
\label{midnight_formula}
A\int_0^t S(r)xdr=S(t)x-x.  
\end{align}
\end{hypothesis}

\begin{theorem} 
\label{thm-4.8}
If $(x,\mathbb X)$ is a geometric rough path and $y$ is the unique mild solution to \eqref{cauchy_prob_rough} belonging to $\mathscr Y_{\alpha,\eta}(0,T)$, then formula \eqref{rappr_int_sol} holds true.
\end{theorem}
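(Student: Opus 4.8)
The plan is to deduce \eqref{rappr_int_sol} from the analogous identity for smooth driving paths, using the approximation results of this section and exploiting the closedness of $A$ to pass to the limit. Since $(x,\mathbb X)$ is a geometric rough path, Proposition \ref{conv_rough_paths} provides geometric rough paths $(x_n,\mathbb X_n)$ with $x_n\in C^1([0,T])$ and $\delta_1h=0$, converging to $(x,\mathbb X)$; by Proposition \ref{prop:conv_sol_reg_sol_rough} the corresponding solutions $y_n$ (constructed as in Remark \ref{rmk:es_sol_mild_2}, with $Gy_n$ as SG-derivative) converge to $y$ in $\mathscr Y_{\alpha,\eta}(0,T)$, and $\mathscr I_{Gy_n}\to\mathscr I_{Gy}$ in $\mathscr C^{\eta'}([0,T]^2_<;E)$. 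In particular $y_n(t)\to y(t)$ and $\mathscr I_{Gy_n}(0,t)\to \mathscr I_{Gy}(0,t)$ in $E$ for every $t$, and $\int_0^ty_n(s)\,ds\to\int_0^ty(s)\,ds$ in $E$.

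\textbf{Step 1 (smooth case).} For $x_n\in C^1([0,T])$ the construction of Subsections \ref{subsect-2.1}--\ref{subsect-2.2} reduces to the classical one, so that, setting $f_n(s)=Gy_n(s)x_n'(s)$, one has $\mathscr I_{SGy_n}(0,t)=\int_0^tS(t-s)f_n(s)\,ds$ and $\mathscr I_{Gy_n}(0,t)=\int_0^tf_n(s)\,ds$, whence $y_n(t)=S(t)\psi+\int_0^tS(t-s)f_n(s)\,ds$. I would prove that
\[
y_n(t)-\psi-\mathscr I_{Gy_n}(0,t)=A\int_0^ty_n(s)\,ds,\qquad t\in[0,T].
\]
Writing the left-hand side as $S(t)\psi-\psi+\int_0^t[S(t-s)-I]f_n(s)\,ds$ and invoking \eqref{midnight_formula} once for $\psi$ and once for each $f_n(s)$, it equals
\[
A\!\int_0^tS(r)\psi\,dr+\int_0^tA\Big[\int_0^{t-s}S(r)f_n(s)\,dr\Big]ds.
\]
Each inner integral lies in $D(A)$ with bounded (hence integrable) image under $A$, so closedness of $A$ lets me factor $A$ out of the outer $ds$-integral; a Fubini argument together with the substitution $\sigma=r+s$ gives $\int_0^t\!\int_0^{t-s}S(r)f_n(s)\,dr\,ds=\int_0^t g_n(\sigma)\,d\sigma$, where $g_n(\sigma)=\int_0^\sigma S(\sigma-s)f_n(s)\,ds=y_n(\sigma)-S(\sigma)\psi$, so that the two terms combine into $A\int_0^t y_n(\sigma)\,d\sigma$. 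This proves the displayed identity. Finally, by \eqref{estim-1} applied to $y_n$ (cf.\ Remark \ref{rmk:es_sol_mild_2}), $y_n(s)\in D(A)$ for $s>0$ with $|Ay_n(s)|_E\le c\,|y_n(s)|_{E_1}\le c's^{\eta+\alpha-1}$, which is integrable since $\eta+\alpha>1-\eta>0$; hence, by closedness again, $A\int_0^ty_n(s)\,ds=\int_0^tAy_n(s)\,ds$, and $y_n$ satisfies \eqref{rappr_int_sol} with $(x,\mathbb X)$ replaced by $x_n$.

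\textbf{Step 2 (passage to the limit).} From Step 1, $A\int_0^ty_n(s)\,ds=y_n(t)-\psi-\mathscr I_{Gy_n}(0,t)$ converges in $E$ to $y(t)-\psi-\mathscr I_{Gy}(0,t)$, while $\int_0^ty_n(s)\,ds\to\int_0^ty(s)\,ds$ in $E$. The closedness of $A$ then yields $\int_0^ty(s)\,ds\in D(A)$ and $A\int_0^ty(s)\,ds=y(t)-\psi-\mathscr I_{Gy}(0,t)$. Since $y(s)\in E_1$ for $s>0$ with $|Ay(s)|_E\le C s^{\eta+\alpha-1}\in L^1(0,t)$ by \eqref{estim-1}, closedness once more gives $A\int_0^ty(s)\,ds=\int_0^tAy(s)\,ds$, and rearranging produces exactly \eqref{rappr_int_sol}.

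I expect the main obstacle to be Step 1: because $S(\cdot)$ is neither assumed strongly continuous nor analytic and $\|S(\tau)\|_{\mathscr L(E_0,E_1)}$ blows up like $\tau^{-1}$, one cannot bring $A$ inside the convolution $\int_0^sS(s-r)f_n(r)\,dr$. The device that circumvents this is to keep $A$ acting on the full time-integral $\int_0^t y_n$, where the singularity is integrable thanks to \eqref{estim-1}, and to use \eqref{midnight_formula} together with Fubini to recognise that quantity; the very same closedness argument is then what makes the limit $n\to\infty$ legitimate without requiring convergence of $y_n$ in the $D(A)$-topology.
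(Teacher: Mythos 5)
Your proposal is correct and follows essentially the same route as the paper's proof: approximate by smooth geometric rough paths via Proposition \ref{conv_rough_paths} and Remark \ref{rmk:es_sol_mild_2}, identify the rough (convolution) integrals with classical ones for $x_n\in C^1$, obtain the integral identity for $y_n$ from \eqref{midnight_formula}, and pass to the limit using the closedness of $A$ together with Proposition \ref{prop:conv_sol_reg_sol_rough} and the integrability of $Ay$ from \eqref{estim-1}. The only difference is that you spell out in full the Fubini/closedness computation for the smooth case, which the paper delegates to computations similar to \cite[Proposition 4.1.5]{LU95}.
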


\begin{proof}
Let $y_n$ be the  mild solution to \eqref{rough_cauchy_probl_n} given by \eqref{def_y_n_appr}. From Remark \ref{rmk:un_rough_young_int}(ii), which shows that the Young integral coincides with the rough integral when both are well-defined, and recalling that the Young integral is nothing but the classical integral when the path is smooth, it follows that
\begin{align*}
y_n(t)=S(t)\psi+\int_0^tS(t-r)Gy_n(r)x_n'(r)dr, \qquad t\in[0,T]. 
\end{align*}

Since $y_n$ is a mild solution in a classical sense and the semigroup $(S(t))_{t\geq0}$ satisfies \eqref{midnight_formula}, computations similar to those in \cite[Proposition 4.1.5]{LU95} imply that
\begin{align*}
y_n(t)=\psi+\int_0^tAy_n(s)ds+\int_0^tGy_n(s)x_n'(s)ds, \qquad t\in[0,T].
\end{align*}

Similarly, taking $S(t)=I$ for every $t\ge 0$, we can easily prove that
\begin{align*}
\mathscr I_{Gy_n}(s,t)=\int_s^tGy_n(r)x'_n(r)dr, \qquad (s,t)\in[0,T]^2_<.    
\end{align*}
% Indeed, taking $S(t)=I$ for every $t\ge 0$ in \eqref{pit125} we get
% \begin{align*}
% \int_s^tGy_n(r)x'_n(r)dr=g_{G^2y_n}(s,t)(x_n(t)-x_n(s))+\widetilde N(s,t), \qquad (s,t)\in[0,T]^2_<,    
% \end{align*}
% where $\widetilde N_n$ is defined as $N_n$, with $\hat\delta_1$ being replaced by $\delta_1$. Computations similar to the previous ones show that the function $\widetilde N_n+M_{\delta_2g_{G^2y_n}}$ belongs to the kernel of operator $\delta_2$, which is the image of operator $\delta_1$. Hence, there exists a function $h:[0,T]\to\R$ such that $\widetilde N_n+M_{\delta_2g_{G^2y_n}}=\delta_1h$. 
% Since $\widetilde N_n+M_{\delta_2g_{G^2y_n}}$ belongs to $\mathscr C^{1+\gamma}([0,T]^2_<;E)$ for some $\gamma>0$, it follows that $\widetilde N_n=-M_{\delta_2g_{G^2y_n}}
% $.

We are almost done. Indeed, we have proved that
\begin{align*}
A\int_0^ty_n(s)ds=
\int_0^tAy_n(s)ds
= y_n(t)-\psi-\mathscr I_{Gy_n}(0,t), \qquad t\in[0,T].    
\end{align*}
Let us fix $t>0$. From Proposition \ref{prop:conv_sol_reg_sol_rough} we infer that
$\int_0^ty_n(s)ds$ and
$A\int_0^ty_n(s)ds$ converge, respectively, to $\int_0^ty(s)ds$ and $y(t)-\psi-\mathscr I_{Gy}(0,t)$ as $n$ tends to $\infty$,
where both the convergences are meant in $E$. Since $A$ is a closed operator, we conclude that $\int_0^ty(s)ds\in D(A)$ and
\begin{align*}
A\int_0^ty(s)ds=y(s)-\psi-\mathscr I_{Gy}(0,t),\qquad\;\,{\rm i.e.}\;\,
y(t)=\psi+A\int_0^ty(s)ds+\mathscr I_{Gy}(0,t). 
\end{align*}

To conclude the proof, it suffices to recall that estimate \eqref{estim-1} shows that  $Ay$ belongs to $L^1((0,T))$, so that $A$ commutes with the integral of $y$ over the interval $(0,t)$.
\end{proof}

\section{It\^{o} formula for mild solutions to
(\ref{cauchy_prob_rough})}
\label{sect-5}
In this section, based on the integral representation of the solution to equation \eqref{cauchy_prob_rough} in Theorem \ref{thm-4.8}, we prove a chain rule formula. 
We recall that, for every pair of Banach spaces $E$ and $F$, and $\alpha,\gamma\in [0,1)$, $C^{\alpha,\gamma}([0,T]\times E;F)$ denotes the space of all bounded functions $f:[0,T]\times E\to F$ such that $f(\cdot,x)$ is $\alpha$-H\"older continuous in $[0,T]$, uniformly with respect to $x\in E$, and $f(t,\cdot)$ is $\gamma$-H\"older continuous in $E$, uniformly with respect to $t\in [0,T]$. It is a Banach space when endowed with the norm $\|f\|_{C^{\alpha,\gamma}([0,T]\times E;F)}=\sup_{x\in E}\|f(\cdot,x)\|_{C^{\alpha}([0,T];F)}+\sup_{t\in [0,T]}[f(t,\cdot)]_{C^{\alpha}_b(E;F)}$. Moreover, $C^{1,2}([0,T]\times E;E)$ is the space of all functions $f:[0,T]\times E\to E$, which are once differentiable in $[0,T]\times E$ with respect to the $t$ variable, with derivative $f_t$ which is continuous in $[0,T]\times E$, and such that $f(t,\cdot)$ is twice Fr\'echet differentiable in $E$ with Fr\'echet derivatives $f_x$ and $f_{xx}$, which are continuous in $[0,T]\times E$. It is normed by setting
$\|f\|_{C^{1,2}([0,T]\times E)}=\|f\|_{\infty}+\|f_t\|_{\infty}+\|f_x\|_{B([0,T]\times E;\mathscr{L}(E))}+\|f_{xx}\|_{B([0,T]\times E;{\rm Bil}(E))}$, where ${\rm Bil}(E)$ is the set of all the bilinear operators over $E$.

\begin{theorem}
Let $F\in C^{1,2}([0,T]\times E;E)$ be such that
$F_x\in C^{\sigma,0}([0,T]\times E;\mathscr{L}(E))$,
$F_{xx}\in C^{\lambda,\gamma}([0,T]\times E;{\rm Bil}(E))$, for some $\sigma\in (1-\eta,1)$, $\lambda\in (1-2\eta,1)$ and $\gamma$ such that $(2+\gamma)\eta>1$. Finally, let $y$ be the solution to equation \eqref{cauchy_prob_rough} provided by Theorem $\ref{thm:ex_un_mild_solution_rough}$.
Then, for every $(s,t)\in [0,T]^2_{<}$ the following It\^{o} formula holds true:
\begin{align}
F(t,y(t))-F(s,y(s))=&\int_s^tF_t(u,y(u))du
+\int_s^t\langle F_x(u,y(u)),Ay(u)\rangle du \notag \\
&
+\int_s^t\langle F_x(u,y(u)),Gy(u)\rangle dx(u).
\label{ito_formula}
\end{align}
\end{theorem}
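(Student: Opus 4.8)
The plan is to derive \eqref{ito_formula} by approximation, leaning on the integral representation and the convergence results of Section \ref{sect-4}. The first task is to make sense of the right-hand side: I would show that the $E$-valued path $Z(u):=\langle F_x(u,y(u)),Gy(u)\rangle$ admits a Gubinelli derivative with respect to $x$, so that the rough integral $\int_s^t Z\,dx=\mathscr I_Z(s,t)$ is defined through Proposition \ref{prop:old_sew_map}. Writing $a(u)=F_x(u,y(u))$ and decomposing $\delta Z(s,t)=a(s)[\delta(Gy)(s,t)]+(\delta a)(s,t)[Gy(s)]+(\delta a)(s,t)[\delta(Gy)(s,t)]$, and then inserting the controlled structure of $y$ (namely $\delta y=Gy\,\delta x+R^Y$ and the Gubinelli derivative $G^2y$ of $Gy$ with remainder $\widetilde R^{GY}$ furnished by Lemma \ref{lem:esistenza_int_rough}) together with a second-order Taylor expansion of $F_x$, one identifies the Gubinelli derivative
\[
Z'(u)=\langle F_{xx}(u,y(u)),(Gy(u),Gy(u))\rangle+\langle F_x(u,y(u)),G^2y(u)\rangle.
\]
The remainder bookkeeping is exactly where every regularity hypothesis on $F$ is consumed: the time-H\"older exponent $\sigma>1-\eta$ of $F_x$, the exponent $\eta+\alpha$ of $\widetilde R^{GY}$ (which satisfies $(\eta+\alpha)+\eta>1$ precisely because $\alpha>1-2\eta$), and the second-order Taylor term of order $\eta(1+\gamma)$ (controlled by $(2+\gamma)\eta>1$) all give a remainder in $\mathscr C^{\rho}$ with $\rho+\eta>1$; simultaneously $Z'$ lies in $C^{\zeta}$ with $\zeta=\min\{\lambda,\gamma\eta,\eta\}>1-2\eta$ (here $\lambda>1-2\eta$ and $(2+\gamma)\eta>1$ enter), so $\zeta+2\eta>1$. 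These are exactly the assumptions of Proposition \ref{prop:old_sew_map}, whence $\mathscr I_Z$ exists.

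Next I would fix the smooth geometric approximations $(x_n,\mathbb X_n)$ of $(x,\mathbb X)$ from Proposition \ref{conv_rough_paths} (with $\delta_1h=0$, since $(x,\mathbb X)$ is geometric) and the corresponding solutions $y_n$ of \eqref{rough_cauchy_probl_n}. For the $C^1$ driver $x_n$ these are classical solutions satisfying $\dot y_n=Ay_n+Gy_n\dot x_n$ on $(0,T]$, as in the proof of Theorem \ref{thm-4.8}. The classical chain rule for $F\in C^{1,2}$ then yields, for every $(s,t)\in[0,T]^2_<$,
\begin{align*}
F(t,y_n(t))-F(s,y_n(s))=&\int_s^tF_t(u,y_n(u))\,du+\int_s^t\langle F_x(u,y_n(u)),Ay_n(u)\rangle\,du\\
&+\int_s^tZ_n(u)\,dx_n(u),
\end{align*}
where $Z_n(u)=\langle F_x(u,y_n(u)),Gy_n(u)\rangle$. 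By Remark \ref{rmk:un_rough_young_int}(ii), and because the canonical lift of the smooth path $x_n$ is exactly its geometric lift $\mathbb X_n$, this Riemann integral coincides with the rough integral $\mathscr I_{Z_n}(s,t)$ of the controlled path $Z_n$ against $(x_n,\mathbb X_n)$.

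Finally I would let $n\to\infty$. Proposition \ref{prop:conv_sol_reg_sol_rough} gives $y_n\to y$ in $\mathscr Y_{\alpha,\eta}(0,T)$, hence $y_n(t)\to y(t)$ in $E$ for every $t$, so that continuity and boundedness of $F$ and $F_t$ dispatch the left-hand side and the time integral by dominated convergence. For the drift term the smoothing estimate \eqref{estim-1}, whose constant depends only on the \emph{uniformly} bounded driver norms, yields $|Ay_n(u)|_E\le\mathfrak C\,u^{\eta+\alpha-1}$ uniformly in $n$; splitting the integral at $\varepsilon$, the piece on $(s,\varepsilon)$ is uniformly small as $\varepsilon\downarrow s$ by integrability of $u^{\eta+\alpha-1}$, while on $[\varepsilon,t]$ an upgrade of the convergence of Proposition \ref{prop:conv_sol_reg_sol_rough} to the stronger norm of $C([\varepsilon,t];E_1)$ gives $Ay_n\to Ay$ and lets me pass to the limit. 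For the rough integral I would prove that $Z_n\to Z$ as controlled paths (from $y_n\to y$ in $\mathscr Y_{\alpha,\eta}(0,T)$ and the regularity of $F_x,F_{xx}$) and that $(x_n,\mathbb X_n)\to(x,\mathbb X)$ in the rough metric, then invoke stability of the sewing map (Remark \ref{rmk:lin_M}(ii) with $S(t)=I$) exactly as in Step~4 of the proof of Proposition \ref{prop:conv_sol_reg_sol_rough}, obtaining $\mathscr I_{Z_n}(s,t)\to\mathscr I_Z(s,t)$. Collecting the four limits produces \eqref{ito_formula}.

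I expect the two genuine obstacles to be the drift term and the rough-integral stability. In the former, $Ay_n$ is only integrable with a singularity at the origin, so the uniform version of \eqref{estim-1} together with the strengthening of $y_n\to y$ to $C([\varepsilon,t];E_1)$ must be established with care (redoing the bootstrap of Theorem \ref{thm:ex_un_mild_solution_rough} for the differences $y_n-y$). In the latter, the continuity of the rough integral under \emph{joint} perturbation of the integrand and of the driving rough path requires controlling $\delta_2(g_{Z_n}-g_{Z})$ across several H\"older scales, mirroring but extending Step~4 of Proposition \ref{prop:conv_sol_reg_sol_rough}. By contrast, the first step is essentially routine once the exponents are tracked, though it is the place where all the regularity assumptions on $F$ are used.
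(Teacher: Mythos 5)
Your route is genuinely different from the paper's: you regularize the driver, apply the classical chain rule to the approximating solutions $y_n$, and try to pass to the limit, whereas the paper never re-approximates at this stage. It works directly with $y$, expanding $F(t,y(t))-F(s,y(s))$ along partitions by second-order Taylor formulas, inserting the integral representation \eqref{rappr_int_sol} into the increments $y(s^n_j)-y(s^n_{j-1})$, and identifying the compensated Riemann sums with the rough integral of $h(r)=\langle F_x(r,y(r)),Gy(r)\rangle$. Your preliminary step (the controlled structure of $Z$, with Gubinelli derivative $\langle F_x,G^2y\rangle+\langle F_{xx}\,Gy,Gy\rangle$ and the exponent bookkeeping using $\sigma>1-\eta$, $\lambda>1-2\eta$, $(2+\gamma)\eta>1$) is sound and is in fact the same computation the paper performs for $h$; it is needed in either approach.

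There is, however, a genuine gap at the crux of your limit passage, namely the drift term. Proposition \ref{prop:conv_sol_reg_sol_rough} gives $y_n\to y$ only in $\mathscr Y_{\alpha,\eta}(0,T)$, i.e.\ at the regularity level $E_{\eta+\alpha}$, while you need $Ay_n(u)\to Ay(u)$ in $E$ (locally uniformly on $(0,T]$, or a.e.\ together with the uniform domination $|Ay_n(u)|_E\le \mathfrak C\, u^{\eta+\alpha-1}$). No result in the paper provides this, and it cannot be obtained softly: Hypotheses \ref{hyp-main} contain no interpolation inequality on the scale $(E_\lambda)$, so the uniform $E_{1+\mu}$ bounds coming from \eqref{estim-1} combined with convergence in $E_{\eta+\alpha}$ do not upgrade to convergence in $E_1$. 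The paper is visibly aware of exactly this difficulty: in Theorem \ref{thm-4.8} the drift limit is handled via the closedness of $A$ applied to $\int_0^t y_n(s)\,ds$, a trick that is unavailable to you because $Ay_n$ is paired nonlinearly with $F_x(u,y_n(u))$ inside your integral. Consequently, your ``redo the bootstrap for $y_n-y$'' is not a refinement to be added with care; it is a substantial new argument (a weighted bootstrap in the spirit of Step 2 of the proof of Theorem \ref{thm:ex_un_mild_solution_rough}, but for differences of solutions driven by \emph{different} rough paths, preceded by weighted convergence estimates in $E_\xi$ for all $\xi<1$ in order to control the commutator terms in $R^{GY_n}-R^{GY}$), comparable in scope to Section \ref{sect-4} itself. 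A similar, though milder, remark applies to the joint stability $\mathscr I_{Z_n}(s,t)\to\mathscr I_Z(s,t)$: it is plausible and close to Step 4 of Proposition \ref{prop:conv_sol_reg_sol_rough}, but the integrands now involve compositions with $F_x$ and $F_{xx}$, and the required convergence of the Gubinelli derivatives and remainders of $Z_n$ is nowhere established. As written, the proposal defers precisely the two steps where the real work lies; the paper's partition argument is designed to avoid both.
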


\begin{proof}
We fix $s,t\in (0,T]$, with $s<t$, and, for every $n\in\N$, a partition $\Pi_n(s,t)=\{s=s_0^n<\ldots<s^n_{m_n}=t\}$ of the interval $[s,t]$, with a mesh whose magnitude tends to zero as $n$ tends to $\infty$, and evaluate the difference $F(t,y(t))-F(s,y(s))$. A simple computation shows that
\begin{align*}
&F(t,y(t))-F(s,y(s))\\
=&\sum_{j=1}^{m_n}(F(s_j^n,y(s_j^n))-F(s_{j-1}^n,y(s_{j-1}^n))\\
=&\sum_{j=1}^{m_n}[F(s_j^n,y(s_j^n))-F(s_{j-1}^n,y(s^n_j))]
+\sum_{j=1}^{m_n}[F(s_{j-1}^n,y(s_j^n))-F(s_{j-1}^n,y(s_{j-1}^n))]\\
=&\sum_{j=1}^{m_n}[F_t(\widetilde s_j^n,y(s^n_j))
-F_t(s_j^n,y(s^n_j))](s^n_j-s^n_{j-1})
+\sum_{j=1}^{m_n}F_t(s_j^n,y(s^n_j))(s^n_j-s^n_{j-1})\\
&+\sum_{j=1}^{m_n}\langle F_x(s_{j-1}^n,y(s^n_{j-1})),
y(s^n_j)-y(s^n_{j-1})\rangle\\
&+\frac{1}{2}\sum_{j=1}^{m_n}\langle [F_{xx}(s_{j-1}^n,\widetilde y^n_j)-F_{xx}(s_{j-1}^n,y(s^n_{j-1}))](y(s^n_j)-y(s^n_{j-1}),y(s^n_j)-y(s^n_{j-1})\rangle\\
&+\frac{1}{2}\sum_{j=1}^{m_n}\langle F_{xx}(s_{j-1}^n,y(s^n_{j-1}))(y(s^n_j)-y(s^n_{j-1}),y(s^n_j)-y(s^n_{j-1})\rangle=:\sum_{k=1}^5I_k^n,
\end{align*}
where, for every $j=1,\ldots,m_n$, $\widetilde s^n_j$ is a suitable point in the interval $(s_{j-1}^n,s_j^n)$ and $\widetilde y^n_j=\sigma y^n_{j-1}+(1-\sigma)y^n_j$ for a suitable $\sigma\in (0,1)$.

We consider the terms $I_k^n$ ($k=1,\ldots,5$) separately.
Since $F_t$ is continuous on $[0,T]\times E$, it is uniformly continuous on $[0,T]\times y([s,t])$. Hence, for every $\varepsilon>0$, we can determine $n_0\in\N$ such that $|F_t(\widetilde s_j^n,y(s^n_j))
-F_t(s_j^n,y(s^n_j))|_E\le\varepsilon$ for every $j=0,\ldots,m_n$, if 
$n\ge n_0$. Hence,
\begin{align*}
|I_1^n|_E\le \sum_{j=1}^{m_n}|F_t(\widetilde s_j^n,y(s^n_j))
-F_t(s_j^n,y(s^n_j))|_E(s^n_j-s^n_{j-1})
\le\varepsilon (t-s)
\end{align*}
for every $n\ge n_0$, so that
\begin{equation}
\lim_{n\to \infty}I_1^n=0.
\label{lim-I1}
\end{equation}

As far as $I_2^n$ is concerned, we observe that, since the function $F_t$ is continuous in $[0,T]\times E$ and $y$ is continuous in $E$, it follows that
\begin{eqnarray}
\lim_{n\to +\infty}I_2^n=\int_s^tF_t(u,y(u))du.
\label{lim-I2}
\end{eqnarray}

We skip for a while the term $I_3^n$ and pass to consider $I_4^n$. Since $F_{xx}$ belongs to $C^{0,\gamma}([0,T];{\rm Bil}(E))$, we can estimate
\begin{align*}
|I_4^n|_E\le &\frac{1}{2}[F_{xx}]_{C^{0,\gamma}([0,T];{\rm Bil}(E))}\sum_{j=1}^{m_n}|y(s^n_j)-y(s^n_{j-1})|^{2+\gamma}\\
\le & \frac{1}{2}[F_{xx}]_{C^{0,\gamma}([0,T];{\rm Bil}(E))}[y]_{C^{\eta}([s,t];E)}\sum_{j=1}^{m_n}|s^n_j-s^n_{j-1}|^{(2+\gamma)\eta}\\
\le &
\frac{1}{2}[F_{xx}]_{C^{0,\gamma}([0,T];{\rm Bil}(E))}[y]_{C^{\eta}([s,t];E)}|\Pi_n(s,t)|^{(2+\gamma)\eta-1}\sum_{j=1}^{m_n}(s^n_j-s^n_{j-1})\\
=& \frac{1}{2}[F_{xx}]_{C^{0,\gamma}([0,T];{\rm Bil}(E))}[y]_{C^{\eta}([s,t];E)}|\Pi_n(s,t)|^{(2+\gamma)\eta-1}|t-s|,
\end{align*}
so that 
\begin{equation}
\lim_{n\to\infty}I_n^4=0.
\label{limit-I4}
\end{equation}

Finally, using formula \eqref{rappr_int_sol} we can write 
\begin{align*}
y(s^n_j)-y(s^n_{j-1})=\int_{s^n_{j-1}}^{s^n_j}Ay(u)du
+{\mathscr I}_{Gy}(s^n_{j-1},s^n_j)
\end{align*}
for every $j=1,\ldots,m_n$, so that
\begin{align*}
I_3^n+I_5^n=&\sum_{j=1}^{m_n}\bigg\langle F_x(s_{j-1}^n,y(s^n_{j-1})),
\int_{s^n_{j-1}}^{s^n_j}Ay(u)du
\bigg\rangle\\
&+\sum_{j=1}^{m_n}\langle F_x(s_{j-1}^n,y(s^n_{j-1})),
\mathscr{I}_{Gy}(s^n_{j-1},s^n_j)\rangle\\
&+\frac{1}{2}\sum_{j=1}^{m_n}\bigg\langle F_{xx}(s^n_{j-1},y(s^n_{j-1}))
\int_{s^n_{j-1}}^{s^n_j}Ay(u)du,\int_{s^n_{j-1}}^{s^n_j}Ay(u)du\bigg\rangle\\
&+\sum_{j=1}^{m_n}\bigg\langle F_{xx}(s^n_{j-1},y(s^n_{j-1}))\int_{s^n_{j-1}}^{s^n_j}Ay(u)du,\mathscr{I}_{Gy}(s^n_{j-1},s^n_j)\bigg\rangle\\
&+\frac{1}{2}\sum_{j=1}^{m_n}\langle F_{xx}(s^n_{j-1},y(s^n_{j-1}))\mathscr{I}_{Gy}(s^n_{j-1},s^n_j),\mathscr{I}_{Gy}(s^n_{j-1},s^n_j)\rangle=\sum_{k=1}^5J_k^n.
\end{align*}

Note that 
\begin{align}
J_1^n=&\sum_{j=1}^{m_n}\langle F_x(s_{j-1}^n,y(s^n_{j-1})),Ay(s^n_{j-1})\rangle(s^n_j-s^n_{j-1})\notag\\
&+\sum_{j=1}^{m_n}\bigg\langle F_x(s_{j-1}^n,y(s^n_{j-1})),
\int_{s^n_{j-1}}^{s^n_j}(Ay(u)-Ay(s^n_{j-1}))du\bigg\rangle
\label{pamela-1}
\end{align}
and the second term in the right-hand side of the previous formula vanishes as $n$ tends to $\infty$. Indeed, from Remark \ref{rmk:cont_y_D(A)}, the function $Ay$ is continuous in $[s,T]$ with values in $E$. Hence, it is uniformly continuous in $[s,t]$ and the term
$|Ay(u)-Ay(s^n_{j-1})|_E$ can be made arbitrarily small provided that $|\Pi_n(s,t)|$ is small enough. The arguments used 
to estimate the term $I_1^n$ allow us to conclude that the second term in the right-hand side of \eqref{pamela-1} vanishes as $n$ tends to $\infty$. Since the function 
$s\mapsto \langle F_x(s,y(s)),Ay(s)\rangle$ is continuous in $[s,t]$, we can infer that 
\begin{align*}
\lim_{n\to +\infty}J_1^n=\int_s^t\langle F_x(u,y(u)),Ay(u)\rangle du.
\end{align*}

Now, we consider the term $J_2^n$ and observe that
\begin{align*}
J^n_2=&
\sum_{j=1}^{m_n}\langle F_x(s_{j-1}^n,y(s^n_{j-1})),
Gy(s^n_{j-1})\rangle (x(s^n_j)-x(s^n_{j-1}))\\
&+\sum_{j=1}^{m_n}\langle F_x(s_{j-1}^n,y(s^n_{j-1})),
G^2y(s^n_{j-1})\rangle\mathbb{X}(s
^n_{j-1},s^n_j)\\
&-\sum_{j=1}^{m_n}\langle F_x(s_{j-1}^n,y(s^n_{j-1})),
M_{g_{G^2y}}(s^n_{j-1},s^n_j)\rangle
\end{align*}
and the last term in the right-hand side of the previous formula vanishes as $n$ tends to $\infty$.

As far as the term $J_3^n$ is concerned, using the continuity of the function $Ay$ in $[s,t]$ we can estimate
\begin{align*}
J_3^n\le & \frac{1}{2}
\sum_{j=1}^{m_n}\|F_{xx}\|_{C([0,T]\times E;{\rm Bil}(E))}\|Ay\|_{C([s,t];E)}^2\sum_{j=1}^{m_n}(s^n_j-s^n_{j-1})^2\\
\le & 
% &\frac{1}{2}\sum_{j=1}^{m_n}\|F_{xx}\|_{C([0,T]\times E;{\rm Bil}(E))}\|Ay\|_{C([s,t];E)}^2|\Pi_n(s,t)|\sum_{j=1}^{m_n}(s^n_j-s^n_{j-1})\\
% =&
\frac{1}{2}\sum_{j=1}^{m_n}\|F_{xx}\|_{C([0,T]\times E;{\rm Bil}(E))}\|Ay\|_{C([s,t];E)}^2|\Pi_n(s,t)|(t-s),
\end{align*}
so that $J_3^n$ vanishes as $n$ tends to $\infty$.

Similarly, the term $J_4^n$ vanishes as $n$ tends to $\infty$. Indeed, 
\begin{align}
\int_{s^n_{j-1}}^{s^n_j}Gy(u)dx(u)=&
Gy(s^n_{j-1})(x(s^n_j)-x(s^n_{j-1}))\notag\\
&+\frac{1}{2}G^2y(s^n_{j-1})(x(s^n_j)-x(s^n_{j-1}))^2-
M_{g_{G^2y}}(s^n_{j-1},s^n_j),
\label{pamela-2}
\end{align}
so that its $E$-norm can be estimated from above in term of a positive constant times $|s^n_j-s^n_{j-1}|^{\eta}$, and we get
$|J_4^n|_E\le C|\Pi_n(s,t)|^{\eta}(t-s)$
for a positive constant $C$, independent of $n$.

Further, using \eqref{pamela-2} and the fact that $\mathbb X(\sigma,\tau)=\displaystyle\frac12(x(\tau)-x(\sigma))^2$ for every $(\sigma,\tau)\in[0,T]^2_<$, we can write
\begin{align*}
J_5^n=&\sum_{j=1}^{m_n}\langle F_{xx}(s^n_{j-1},y(s^n_{j-1}))Gy(s^n_{j-1}),Gy(s^n_{j-1})\rangle \mathbb{X}(s^n_{j-1},s^n_j)\\
&+\frac{1}{2}\sum_{j=1}^{m_n}\langle F_{xx}(s^n_{j-1},y(s^n_{j-1}))Gy(s^n_{j-1}),G^2y(s^n_{j-1})\rangle
(x(s^n_j)-x(s^n_{j-1}))^3\\
&-\sum_{j=1}^{m_n}\langle F_{xx}(s^n_{j-1},y(s^n_{j-1}))Gy(s^n_{j-1}),M_{g_{G^2y}}(s^n_{j-1},s^n_j)\rangle(x(s^n_j)-x(s^n_{j-1}))\\
&-\sum_{j=1}^{m_n}\langle F_{xx}(s^n_{j-1},y(s^n_{j-1}))G^2y(s^n_{j-1}),M_{g_{G^2y}}(s^n_{j-1},s^n_j)\rangle \mathbb{X}(s^n_{j-1},s^n_{j})\\ 
&+\frac{1}{2}\sum_{j=1}^{m_n}\langle F_{xx}(s^n_{j-1},y(s^n_{j-1}))G^2y(s^n_{j-1}),G^2y(s^n_{j-1})\rangle (\mathbb{X}(s^n_{j-1},s^n_{j}))^2\\
&+\frac{1}{2}\sum_{j=1}^{m_n}\langle F_{xx}(s^n_{j-1},y(s^n_{j-1}))M_{g_{G^2y}}(s^n_{j-1},s^n_j),M_{g_{G^2y}}(s^n_{j-1},s^n_j)\rangle.
\end{align*}

Since $x\in C^{\eta}([0,T])$, with $\eta\in \left (\frac{1}{3},\frac{1}{2}\right ]$ the second- and fifth-terms in the previous chain of equalities vanish as $n$ tends to $\infty$. Similarly, the third-, the fourth- and the last-terms vanish as $n$ tends to $\infty$ since $M_{g_{G^2y}}\in \mathscr{C}^{2\eta+\alpha}([0,T]^2_{<};E)$ and $2\eta+\alpha>1$. 

Combining all the above results, we conclude that, for every $n\in\N$,
\begin{align}
I_3^n+I_5^n
=&a_n+\int_s^t\langle F_x(u,y(u)),Ay(u)\rangle du\notag\\
&+\sum_{j=1}^{m_n}\langle F_x(s_{j-1}^n,y(s^n_{j-1})),
Gy(s^n_{j-1})\rangle (x(s^n_j)-x(s^n_{j-1}))\notag\\
&+\sum_{j=1}^{m_n}\langle F_x(s_{j-1}^n,y(s^n_{j-1})),
G^2y(s^n_{j-1})\rangle\mathbb{X}(s
^n_{j-1},s^n_j)\notag\\
&+
\sum_{j=1}^{m_n}\langle F_{xx}(s^n_{j-1},y(s^n_{j-1}))Gy(s^n_{j-1}),Gy(s^n_{j-1})\rangle \mathbb{X}(s^n_{j-1},s^n_j), 
\label{limit-mesh}
\end{align}
where $(a_n)_{n\in\N}$ is a suitable sequence which vanishes as $n$ tends to $\infty$.

Now, we prove that the sum of the last three terms in the right-hand side of \eqref{limit-mesh} converges to the rough integral of the function $h$, defined by 
$h(r)=\langle F_x(r,y(r)),Gy(r)\rangle$ for every $r\in [0,T]$, over the interval $[s,t]$. For this purpose, we begin by proving that $h$ admits Gubinelli derivative. A simple computation reveals that 
\begin{align*}
&(\delta h)(r_1,r_2)\\
=& \langle F_x(r_2,y(r_2))-F_x(r_1,y(r_2)),Gy(r_2)\rangle
+\langle F_x(r_1,y(r_2))-F_x(r_1,y(r_1)),Gy(r_2)\rangle\\
&+\langle F_x(r_1,y(r_1)),Gy(r_2)-Gy(r_1)\rangle\\
=&\langle F_x(r_2,y(r_2))-F_x(r_1,y(r_2)),Gy(r_2)\rangle
+\langle F_{xx}(r_1,\widetilde y_{12})(\delta y)(r_1,r_2),Gy(r_2)\rangle\\
&+\langle F_x(r_1,y(r_1)),G^2y(r_1)\rangle (x(r_2)-x(r_1))+\langle F_x(r_1,y(r_1)),GR^Y(r_1,r_2)\rangle\\
=&\langle F_x(r_2,y(r_2))-F_x(r_1,y(r_2)),Gy(r_2)\rangle\\
&+\langle F_{xx}(r_1,\widetilde y_{12})Gy(r_1),Gy(r_2)\rangle (x(r_2)-x(r_1))
+\langle F_{xx}(r_1,\widetilde y_{12})R^Y(r_1,r_2),Gy(r_2)\rangle\\
&+\langle F_x(r_1,y(r_1)),G^2y(r_1)\rangle (x(r_2)-x(r_1))+\langle F_x(r_1,y(r_1)),GR^Y(r_1,r_2)\rangle\\
=&\langle F_{xx}(r_1,\widetilde y_{12})Gy(r_1),Gy(r_2)\rangle (x(r_2)-x(r_1))\\
&+\langle F_x(r_1,y(r_1)),G^2y(r_1)\rangle (x(r_2)-x(r_1))
+o(|r_2-r_1|^{\zeta_1}),
\end{align*}
where $\zeta_1=\min\{\sigma,\eta+\alpha\}$ and $\widetilde r_{12}$, $\widetilde y_{12}$ are suitable points, in the segments joining $r_1$ to $r_2$ and $y(r_1)$ to $y(r_2)$, respectively.
Indeed, 
\begin{align*}
&|\langle F_x(r_2,y(r_2))-F_x(r_1,y(r_2)),Gy(r_2)\rangle|\\
\le &\|F_x\|_{C^{\sigma,0}([0,T]\times E;\mathscr{L}(E))}\|G\|_{\mathscr{L}(E)}\|y\|_{B([0,T];E)}|r_2-r_1|^{\sigma}\\[1mm]
&|\langle F_{xx}(r_1,\widetilde y_{12})R^Y(r_1,r_2),Gy(r_2)\rangle |\\
\le &\|F_{xx}\|_{C_b([0,T]\times E;{\rm Bil}(E))}\|R^Y\|_{\mathscr{C}^{\eta+\alpha}([0,T]^2_{<};E)}\|G\|_{\mathscr{L}(E)}\|y\|_{B([0,T];E)}|r_2-r_1|^{\eta+\alpha},\\[1mm]
&|\langle F_x(r_1,y(r_1)),GR^Y(r_1,r_2)\rangle|\\
\le &
\|F_x\|_{C_b([0,T]\times E;\mathscr{L}(E))}\|G\|_{\mathscr{L}(E)}\|R^Y\|_{\mathscr{C}^{\eta+\alpha}([0,T]^2_{<};E)}|r_2-r_1|^{\eta+\alpha}.
\end{align*}

Next, we observe that
\begin{align*}
&\langle F_{xx}(r_1,\widetilde y_{12})Gy(r_1),Gy(r_2)\rangle\\
=&\langle F_{xx}(r_1,y(r_1))Gy(r_1),Gy(r_2)\rangle+\langle [F_{xx}(r_1,\widetilde y_{12})-F_{xx}(r_1,y(r_1))]Gy(r_1),Gy(r_2)\rangle\\
=&\langle F_{xx}(r_1,y(r_1))Gy(r_1),Gy(r_1)\rangle+\langle F_{xx}(r_1,y(r_1))Gy(r_1),Gy(r_2)-Gy(r_1)\rangle\\
&+\langle (F_{xx}(r_1,\widetilde y_{12})-F_{xx}(r_1,y(r_1))Gy(r_1),Gy(r_2)\rangle\\
=&\langle F_{xx}(r_1,y(r_1))Gy(r_1),Gy(r_1)\rangle
+O(|r_2-r_1|^{\gamma})+O(|r_2-r_1|^{\alpha}),
\end{align*}
since $F_{xx}\in C^{0,\gamma}([0,T];{\rm Bil}(E))$, 
$\hat\delta_1Gy\in \mathscr{C}^{\alpha}([0,T]^2_{<};E_{\alpha})$ and $Gy\in  B([0,T];E_{\alpha})$, which implies that $Gy\in C^{\alpha}([0,T];E)$. 
Putting everything together, we have proved that
\begin{align*}
(\delta h)(r_1,r_2)=&[\langle F_x(r_1,y(r_1)),G^2y(r_1)\rangle\\
&+\langle F_{xx}(r_1,y(r_1))Gy(r_1),Gy(r_1)\rangle](x(r_2)-x(r_1))+O(|t-s|^{\rho}),
\end{align*}
where $\rho=\min\{\sigma,\eta+\alpha,\gamma+\eta\}$. Note that $\rho+\eta>1$ since $\sigma>1-\eta$, by assumptions, $2\eta+\alpha>1$ and $\gamma+2\eta>1$ follows from the condition $2\eta+\gamma\eta>1$.

Hence, the function $r\mapsto \langle F_x(r,y(r)),G^2y(r)\rangle+\langle F_{xx}(r,y(r))Gy(r),Gy(r)\rangle$ is a Gubinelli derivative of
the function $h$.

Since $\check h\in C^{\xi}([0,T];E)$ with $\xi=\min\{\sigma,\lambda,\eta\gamma\}$ and $\lambda\in (1-\eta,1)$, it follows immediately that $\xi+2\eta>1$. Hence,
the rough integral
$\int_s^t\langle F_x(u,y(u)),Gy(u)\rangle dx(u)$ is well-defined and 
\begin{align*}
&\int_s^t\langle F_x(u,y(u)),Gy(u)\rangle dx(u)\\
=&
\lim_{n\to\infty}\bigg [\sum_{j=1}^{m_n}\langle F_x(s_{j-1}^n,y(s^n_{j-1})),
Gy(s^n_{j-1})\rangle (x(s^n_j)-x(s^n_{j-1}))\\
&\qquad\qquad\quad+\sum_{j=1}^{m_n}\langle F_x(s_{j-1}^n,y(s^n_{j-1})),
G^2y(s^n_{j-1})\rangle\mathbb{X}(s
^n_{j-1},s^n_j)\\
&\qquad\qquad\quad+
\sum_{j=1}^{m_n}\langle F_{xx}(s^n_{j-1},y(s^n_{j-1}))Gy(s^n_{j-1}),Gy(s^n_{j-1})\rangle \mathbb{X}(s^n_{j-1},s^n_j)\bigg ].
\end{align*}

Letting $n$ tend to $\infty$ in \eqref{limit-mesh}, we can infer that
\begin{align*}
\lim_{n\to +\infty}(I_3(n)+I_5(n))=&
\int_s^t\langle F_x(u,y(u)),Ay(u)\rangle du\\
&+\int_s^t\langle F_x(u,y(u)),Gy(u)\rangle dx(u).
\end{align*}

From this formula, \eqref{lim-I1}, \eqref{lim-I2} and \eqref{limit-I4}, the assertion follows for $s>0$. Letting $s$ tend to $0$ in \eqref{ito_formula}, and taking into account Lemma \ref{lem:esistenza_int_rough}(ii), estimate \eqref{estim-1} and recalling that $F$ and $y$ are continuous functions, we can easily complete the proof. 
\end{proof}

\section{An example}
\label{sect-6}
Let $\mathcal A$ be the second-order differential operator on $\R^d$ defined by
\begin{align*}
\mathcal A=\sum_{i,j=1}^dq_{ij}D^2_{ij}+\sum_{i=1}^dD_{i}+c, 
\end{align*}
with coefficients which belongs to $C^{\beta}_b(\R^d)$ for some $\beta\in (0,1)$. Further, assume that $\sum_{i,j=1}^dq_{ij}(x)\xi_i\xi_j\geq q_0|\xi|^2$ for every $x,\xi\in \R^d$ and some positive constant $q_0$. Let $A$ be the realization of $\mathcal A$ in $E=C_b(\R^d)$ with domain
\begin{align*}
D(A)=\bigg\{u\in C_b(\R^d)\cap\bigcap_{p<\infty}W^{2,p}_{\rm loc}(\R^d):\mathcal A u\in C_b(\R^d)\bigg\}.    
\end{align*}
It is well-known that $A$ generates an analytic semigroup $(S(t))_{t\geq0}$. For $\lambda\in [0,3)\setminus\{1,2\}$, we can take $E_{\lambda}=D_A(\lambda,\infty)$, $E_1=D(A)$ and $E_2=D(A^2)$. In particular, $E_{\lambda}=C^{2\lambda}_b(\R^d)$ if $\lambda\in (0,1)\setminus\{\frac{1}{2}\}$ and $E_{1/2}$ is the usual Zygmund space 
%of functions $g\in C_b(\R^d)$ such that 
% \begin{align*}
% [g]_{E_{1/2}}:=\sup_{x\neq y}\frac{|f(x)+f(y)-2f((x+y)/2)}{|x-y|}<+\infty,
% \end{align*}
(see \cite[Chapter 14]{lor-rha}).
Let $x$ be a trajectory of a fractional Brownian motion with Hurst parameter $H\in\left(\frac13,\frac12\right]$ and let $g\in C^{2\rho}_b(\R^d)$ for some $\rho\in(0,1)$ such that $H+\rho>1$. Then, the linear operator $G:E\to E$, defined as $(Gy)(\xi)=g(\xi)y(\xi)$ for every $y\in C_b(\R^d)$ and $\xi\in\R^d$, maps $E_\lambda$ into $E_\lambda$ for every $\lambda\in[0,\rho]$. In particular, for every choice of $\mathbb{X}\in\mathscr{C}^{2H}([0,T]^2_{<})$, which satisfies condition \eqref{form-diff-X}, and every $\psi\in E_{H+\alpha}$, with $\alpha\in\left(1-2H,\min\{H,\rho-H\}\right )$, problem \eqref{cauchy_prob_rough} admits a unique mild solution $y$ such that $y(t)\in D(A)$ for every $t\in(0,T]$.

\end{document}